\tikzstyle{interrupt}=[
\newtheorem*{rep@theorem}{\rep@title}
\newcommand{\newreptheorem}[2]{%
\newenvironment{rep#1}[1]{%
 \def\rep@title{#2 \ref{##1}}%
 \begin{rep@theorem}}%
 {\end{rep@theorem}}}
\newtheorem{thm}{Theorem}[section]
\newtheorem{Lemma}[thm]{Lemma}
\newtheorem{Proposition}[thm]{Proposition}
\newtheorem{Corollary}[thm]{Corollary}
\newtheorem*{disclaimer}{Disclaimer}
\theoremstyle{definition}
\newtheorem{Definition}[thm]{Definition}
\newtheorem{Remark}[thm]{Remark}
\newtheorem{Algorithm}[thm]{Algorithm}
\newtheorem{Notation}[thm]{Notation}
\newtheorem{Example}[thm]{Example}
\newtheorem{Construction}[thm]{Construction}
\title[A monodromy graph approach to double Hurwitz numbers]{A monodromy graph approach to the piecewise polynomiality of simple, monotone and Grothendieck dessins d'enfants double Hurwitz numbers}
\author{Marvin Anas Hahn}
\keywords{Hurwitz numbers, monodromy graphs, wall-crossing}
\subjclass[2010]{14T05, 14N10, 05A15}
\address{Mathematisches Institut, Universit\"at T\"ubingen, Auf der Morgenstelle 10, 72076 T\"ubingen, Germany}
\email{marvinanashahn@gmail.com}
\begin{document}
\begin{abstract}
Hurwitz numbers count genus $g$, degree $d$ covers of the complex projective line with fixed branched locus and fixed ramification data. An equivalent description is given by factorisations in the symmetric group. Simple double Hurwitz numbers are a class of Hurwitz-type counts of specific interest. In recent years a related counting problem in the context of random matrix theory was introduced as so-called monotone Hurwitz numbers. These can be viewed as a desymmetrised version of the Hurwitz-problem. A combinatorial interpolation between simple and monotone double Hurwitz numbers was introduced as mixed double Hurwitz numbers and it was proved that these objects are piecewise polynomial in a certain sense. Moreover, the notion of strictly monotone Hurwitz numbers has risen in interest as it is equivalent to a certain Grothendieck dessins d'enfant count. In this paper, we introduce a combinatorial interpolation between simple, monotone and strictly monotone double Hurwitz numbers as \textit{triply interpolated Hurwitz numbers}. Our aim is twofold: Using a connection between triply interpolated Hurwitz numbers and tropical covers in terms of so-called monodromy graphs, we give algorithms to compute the polynomials for triply interpolated Hurwitz numbers in all genera using Erhart theory. We further use this approach to study the wall-crossing behaviour of triply interpolated Hurwitz numbers in genus $0$ in terms of related Hurwitz-type counts. All those results specialise to the extremal cases of simple, monotone and Grothendieck dessins d'enfants Hurwitz numbers.
\end{abstract}
\maketitle
\section{Introduction}
Hurwitz numbers are important enumerative invariants connecting various areas of mathematics, such as algebraic gometry, combinatorics, representation theory, operator theory, tropical geometry and many more. Introduced by Adolf Hurwitz in 1891, they were used to study the moduli space of genus $g$ curves \cite{Hurwitz}. There are several equivalent definitions of Hurwitz numbers. The one originally introduced by Hurwitz is of a topological nature: It counts the number of branched genus $g$, degree $d$ covers of $\mathbb{P}^1$ with fixed ramification data over $n$ fixed points. Another description, which is essentially due to Hurwitz as well, interprets Hurwitz numbers as the enumeration of certain factorisations in the symmetric group.\\
Hurwitz numbers have been a focal point of study in the last two decades when their relationship to Gromov-Witten theory and mathematical physics was uncovered. The cases of single and double Hurwitz numbers have proven to be of specific interest. Single Hurwitz numbers count covers of $\mathbb{P}^1$ with ramification profile $\mu$ at $0\in \mathbb{P}^1$ (where $\mu$ is a partition of the degree) and simple ramification data at $m$ arbitrary fixed points (where $m$ is determined by the Riemann-Hurwitz formula). Simple double Hurwitz numbers count covers of $\mathbb{P}^1$ with ramification profile $\mu$ at $0$ and $\nu$ at $\infty$ and simple ramification at $m$ arbitrary fixed points (where again $m$ is given by the Riemann-Hurwitz formula). A remarkable result relating single Hurwitz numbers to intersection products is the celebrated ELSV formula \cite{ekedahl2001hurwitz}. An immediate consequence is that -- up to a combinatorial factor -- single Hurwitz numbers behave polynomially in the entries of the partition $\mu$ specifying the ramification data over $0$.\\
Many properties of single Hurwitz numbers translate to similar properties of simple double Hurwitz numbers, e.g. it was proved in \cite{GJV} that simple double Hurwitz numbers behave piecewise polynomially in the entries of the partitions $\mu$ and $\nu$ specifying the ramification data over $0$ and $\infty$. The natural question whether there is an ELSV-type formula for simple double Hurwitz numbers remains an active field of research. ELSV-type formulas are closely related to the Chekhov-Eynard-Orantin topological recursion (\cite{chekhov2006hermitian}, \cite{eynard2009laplace}), which is a way of associating a recursion involving differential forms to a spectral curve.
There are spectral curves with which differential forms can be produced that can be viewed as generating functions for single Hurwitz numbers -- one can say that single Hurwitz numbers satisfy the topological recursion. The question whether simple double Hurwitz number satisfy the topological recursion in some sense is also an active field of research (\cite{guay2014generating}, \cite{alexandrov2016weighted}). There are several cases where an ELSV-type formula was derived from topological recursion (\cite{dunin2015polynomiality}, \cite{alexandrov2015ramifications}). By proving that an enumerative problem satisfies the topological recursion, one often makes use of the (quasi-)polynomiality of this problem (\cite{kramer2016quasi}). In connection with topological recursion and ELSV-formulas, the piecewise polynomial structure of simple double Hurwitz numbers gains relevance.\\
The chamber behaviour induced by the piecewise polynomial structure of double Hurwitz numbers was first studied in \cite{SSV} and by using degeneration techniques, wall-crossing formulas for genus $0$ were given. The problem of understanding the chamber behaviour for higher genera remained unanswered until wall-crossing formulas for arbitrary genus were given in \cite{CJMa} and \cite{johnson2015}. In \cite{CJMa}, these formulas were proved using so-called monodromy graphs which essentially express double Hurwitz numbers in terms of covers as they appear in tropical geometry. This description was developed in \cite{CJM} by giving a graph theoretic interpretation of factorisations in the symmetric group.\\
There are several variants on the definition of Hurwitz numbers yielding so-called \textit{Hurwitz-type counts}. Two of the more important Hurwitz-type counts are so-called \textit{monotone} and \textit{strictly monotone Hurwitz numbers}. Monotone Hurwitz numbers were introduced in \cite{goulden2014monotone}. They are defined in the symmetric group setting and show up in the computation of the HCIZ integral. A tropical interpretation of certain monotone Hurwitz numbers in the flavour of \cite{CJM} was developed in \cite{do2015monotone}, where a conjecture on the topological recursion for single monotone Hurwitz numbers was stated (for further literatue on monotone Hurwitz numbers and topological recursion, see e.g. \cite{alexandrov2015ramifications}, \cite{do2014topological}).\\
Further, the notion of strictly monotone Hurwitz numbers have gained attention as is equivalent to counting certain Grothendieck dessins d'enfants \cite{alexandrov2015ramifications}. As for monotone Hurwitz numbers, topological recursion was proved for many cases of strictly monotone double Hurwitz numbers \cite{norbury2009string,do2013quantum,dumitrescu2013spectral,kazarian2015virasoro}.\\
A combinatorial interpolation between double Hurwitz numbers and monotone double Hurwitz numbers was studied in \cite{zbMATH06586291} as \textit{mixed Hurwitz numbers}. It was proved that those new enumerative objects satisfy a piecewise polynomiality result as well, thus proving piecewise polynomiality for monotone double Hurwitz numbers and giving a new proof for the piecewise polynomiality of double Hurwitz numbers. This was done in terms of character theory.\\
It is natural to ask for wall-crossing formulas for mixed Hurwitz numbers. In this paper we combine several approaches to Hurwitz numbers to answer this question. We begin by introducing a combinatorial interpolation between simple, monotone and strictly monotone Hurwitz numbers called \textit{triply interpolated Hurwitz numbers}. We further find a tropical interpretation of triply interpolated Hurwitz numbers in the flavour of \cite{do2015monotone}. Using this description, we develop an algorithm to compute the polynomials in each chamber (see \cref{alg:1} for the case of genus $0$ and \cref{alg:2} for the case of arbitrary genus, which involves Erhart theory, more precisely the integration over lattice points in a polytope. Moreover, we generalise this method to give an algorithm computing the polynomials in each chamber by certain polytope-computations for arbitrary genus.\\
Finally, we introduce a Hurwitz-type counting problem generalising triply interpolated Hurwitz numbers in genus $0$, which is accessible by our algorithms as well. We give recursive wall-crossing formulas for this generalised counting problem, which in particular implies wall-crossing formulas for triply interpolated Hurwitz numbers.\\
In \cref{sec:pre}, we recall some of the basic facts about Hurwitz numbers and outline the previous polynomiality results. In \cref{sec:gr}, we describe triply interpolated Hurwitz numbers in terms of monodromy graphs. We use this description in \cref{sec:pie} to compute the polynomials in each chamber. We begin this discussion for genus $0$ in \cref{sec:pi0} and generalise the method for higher genus in \cref{sec:pi1}. In \cref{sec:ch}, we define a generalisation of triply interpolated Hurwitz numbers in genus $0$ and explain how this new enumerative problem yields a recursive wall-crossing formula, which is in particular valid for triply interpolated Hurwitz numbers.\vspace{\baselineskip}\\

\textbf{Acknowledgements.} I am indebted to my advisor Hannah Markwig for many helpful suggestions, her guidance and extensive proof-reading throughout the preparation of this paper. Moreover, the author thanks Maksim Karev, Reinier Kramer and Danilo Lewanski for helpful comments and discusions. The author gratefully acknowledges partial support by DFG SFB-TRR 195 "Symbolic tool in mathematics and their applications", project A 14 "Random matrices and Hurwitz numbers" (INST 248/238-1).

\section{Preliminaries}
\label{sec:pre}
We begin by introducing the basic notions of Hurwitz numbers.

\begin{Definition}
\label{def:hurwitz}
 Let $d$ be a positive integers, $\mu,\nu$ two ordered partitions of $d$ and let $g$ be a non-negative integer. Moreover, let $q_1,\dots,q_b$ be points in $\mathbb{P}^1$, where $b=2g-2+\ell(\mu)+\ell(\nu)$. We define a Hurwitz cover of type $(g,\mu,\nu)$ to be a holomorphic map $\pi:C\to\mathbb{P}^1$, such that:
  \begin{enumerate}[(1)]
  \item $C$ is a connected genus $g$ curve,
  \item $\pi$ is a degree $d$ map, with ramification profile $\mu$ over $0$, $\nu$ over $\infty$ and $(2,1,\dots,1)$ over $q_i$ for all $i=1,\dots,b$,
  \item $\pi$ is unramified everywhere else,
  \item the pre-images of $0$ and $\infty$ are labeled, such that the point labeled $i$ in $\pi^{-1}(0)$ (respectively $\pi^{-1}(\infty)$) has ramification index $\mu_i$ (respectively $\nu_i)$.  
 \end{enumerate}
We define an isomorphism between two covers $\pi:C_1\to\mathbb{P}^1$ and $\pi':C_2\to\mathbb{P}^1$ to be a homeomorphism $\phi:C_1\to C_2$ respecting the labels, such that the following diagram commutes:
\begin{equation}
\begin{tikzcd}
C_1 \arrow{r}{\phi} \arrow{d}{\pi} & C_2 \arrow{d}{\pi'} \\
\mathbb{P}^1 \arrow{r}{\mathrm{id}} & \mathbb{P}^1
\end{tikzcd}
\end{equation}
Then we define the simple double Hurwitz numbers as follows: \begin{equation}h_{b;\mu,\nu}=\sum \frac{1}{|\mathrm{Aut}(\pi)|},\end{equation} where the sum goes over all isomorphism classes of Hurwitz covers of type $(g,\mu,\nu)$. This number does not depend on the position of the $q_i$. The degree is implicit in the notation $h_{b;\mu,\nu}$, as $d=\sum \mu_i=\sum \nu_j$. The genus $g$ of simple branch points is determined by the Riemann-Hurwitz formula, so $b=2g-2+\ell(\mu)+\ell(\nu)$ as above.\par
When we drop the condition on $C$ to be a \textit{connected} curve and the labels in condition (4), we obtain the notion of \textit{disconnected simple double Hurwitz numbers} $h^{\circ}_{b;\mu,\nu}$.
\end{Definition}

\begin{Remark}
The notions of simple double Hurwitz numbers and disconnected simple double Hurwitz numbers determine each other by the inclusion-exclusion principle.
\end{Remark}

For $\sigma\in S_d$, we denote its cycle type by $\mathcal{C}(\sigma)\vdash d$. We define the following factorisation counting problem in the symmetric group.
\begin{Definition}
\label{def:hursym}
Let $d,g,\mu,\nu$ be as in \cref{def:hurwitz}. We call $\left(\sigma_1,\tau_1,\dots,\tau_b,\sigma_2\right)$ a factorisation of type $(g,\mu,\nu)$, if:
\begin{enumerate}
\item $\sigma_1,\ \sigma_2,\ \tau_i\in\mathcal{S}_d$,
\item $\sigma_2\cdot\tau_b\cdot\dots\cdot\tau_1\cdot\sigma_1=\mathrm{id}$,
\item $b=2g-2+\ell(\mu)+\ell(\nu)$
\item $\mathcal{C}(\sigma_1)=\mu,\ \mathcal{C}(\sigma_2)=\nu$ and $\mathcal{C}(\tau_i)=(2,1,\dots,1)$,
\item the group generated by $\left(\sigma_1,\tau_1,\dots,\tau_b,\sigma_2\right)$ acts transitively on $\{1,\dots,d\}$,
\item the disjoint cycles of $\sigma_1$ and $\sigma_2$ are labeled, such that the cycle $i$ has length $\mu_i$.
\end{enumerate}
We denote the set of all factorisations of type $(g,\mu,\nu)$ by $\mathcal{F}(g,\mu,\nu)$. 
\end{Definition}
A well-known fact is the following theorem, which is essentially due to Hurwitz.
\begin{thm}
\label{thm:hursym}
Let $g,\mu,\nu$ as in the previous definition, then
\begin{equation}h_{b;\mu,\nu}=\frac{1}{d!}\left|\mathcal{F}(g,\mu,\nu)\right|.\end{equation}
\end{thm}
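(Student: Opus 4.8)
The plan is to prove this classical correspondence via the monodromy representation of a branched cover, i.e. by invoking the Riemann existence theorem. First I would fix a base point $\ast \in \mathbb{P}^1 \setminus \{0,\infty,q_1,\dots,q_b\}$ together with standard loops $\gamma_0, \gamma_1, \dots, \gamma_b, \gamma_\infty$ based at $\ast$, each encircling exactly one of the marked points $0, q_1, \dots, q_b, \infty$, chosen so that $\pi_1(\mathbb{P}^1 \setminus \{0,\infty,q_1,\dots,q_b\},\ast)$ is generated by these loops subject to the single relation $\gamma_\infty \gamma_b \cdots \gamma_1 \gamma_0 = 1$. This relation has exactly the shape of condition (2) in \cref{def:hursym}, which is the structural reason the correspondence will work.

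Next, given a Hurwitz cover $\pi\colon C\to\mathbb{P}^1$ of type $(g,\mu,\nu)$, I would restrict to the unramified covering $\pi^{-1}(U)\to U$ over the punctured base $U = \mathbb{P}^1 \setminus \{0,\infty,q_1,\dots,q_b\}$ and choose an identification of the fibre $\pi^{-1}(\ast)$ with $\{1,\dots,d\}$. The path-lifting action of $\pi_1(U,\ast)$ on this fibre then produces permutations $\sigma_1 = \rho(\gamma_0)$, $\tau_i = \rho(\gamma_i)$ and $\sigma_2 = \rho(\gamma_\infty)$ in $S_d$. The local structure of the cover near each branch point forces the cycle type of each monodromy permutation to equal the corresponding ramification profile, giving $\mathcal{C}(\sigma_1)=\mu$, $\mathcal{C}(\sigma_2)=\nu$, $\mathcal{C}(\tau_i)=(2,1,\dots,1)$; the relation above yields $\sigma_2\tau_b\cdots\tau_1\sigma_1 = \mathrm{id}$; and connectedness of $C$ is equivalent to transitivity of $\langle \sigma_1,\tau_1,\dots,\tau_b,\sigma_2\rangle$. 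Matching the labels of the sheets over $0$ and $\infty$ with the labels of the cycles of $\sigma_1$ and $\sigma_2$ turns this data into an element of $\mathcal{F}(g,\mu,\nu)$. The Riemann existence theorem supplies the converse: every such tuple is realised by a cover, unique up to isomorphism, so I obtain a bijection between $\mathcal{F}(g,\mu,\nu)$ and the set of isomorphism classes of Hurwitz covers \emph{equipped with a labelling of the fibre} $\pi^{-1}(\ast)$.

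To pass from this labelled count to $h_{b;\mu,\nu}$, I would run an orbit--stabiliser argument for the simultaneous conjugation action of $S_d$ on $\mathcal{F}(g,\mu,\nu)$, which corresponds precisely to changing the identification of $\pi^{-1}(\ast)$ with $\{1,\dots,d\}$. Two factorisations lie in the same $S_d$-orbit if and only if the associated covers are isomorphic, so the orbits are indexed by isomorphism classes $[\pi]$. For a factorisation $f$ mapping to $[\pi]$, the stabiliser $\mathrm{Stab}_{S_d}(f)$ is the centraliser of the whole tuple, and by transitivity a centralising permutation descends to a well-defined deck transformation; this identifies $\mathrm{Stab}_{S_d}(f)\cong \mathrm{Aut}(\pi)$. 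Orbit--stabiliser then gives $|S_d\cdot f| = d!/|\mathrm{Aut}(\pi)|$, and summing over orbits yields
\begin{equation*}
|\mathcal{F}(g,\mu,\nu)| = \sum_{[\pi]} \frac{d!}{|\mathrm{Aut}(\pi)|} = d!\sum_{[\pi]}\frac{1}{|\mathrm{Aut}(\pi)|} = d!\, h_{b;\mu,\nu},
\end{equation*}
which is the claim after dividing by $d!$.

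The main obstacle I anticipate is not the combinatorics but the careful bookkeeping of the two black boxes and the labels: first, citing the Riemann existence theorem in the precise form that yields a bijection \emph{up to simultaneous conjugation} rather than a mere surjection, and second, verifying that the labelling conventions of \cref{def:hurwitz} condition (4) and \cref{def:hursym} condition (6) are compatible under monodromy, so that the identification $\mathrm{Stab}_{S_d}(f)\cong\mathrm{Aut}(\pi)$ is genuinely label-respecting on both sides. Once these are pinned down, the weighting by $1/|\mathrm{Aut}(\pi)|$ and the normalising factor $1/d!$ match automatically.
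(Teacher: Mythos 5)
The paper gives no proof of this statement: it is recorded as ``a well-known fact \dots essentially due to Hurwitz,'' so there is no in-paper argument to compare against line by line. Your proposal is precisely the standard argument behind that citation --- monodromy representation of the restricted cover over the punctured line, the Riemann existence theorem for the converse direction, and orbit--stabiliser for the simultaneous $S_d$-conjugation action --- and it is correct. One point you flag but should state more carefully: since $\mathcal{F}(g,\mu,\nu)$ carries labelled cycles (condition (6) of Definition \ref{def:hursym}) and $\mathrm{Aut}(\pi)$ consists of label-respecting automorphisms (Definition \ref{def:hurwitz}), the stabiliser $\mathrm{Stab}_{S_d}(f)$ is \emph{not} the full centraliser of the tuple but its subgroup preserving each labelled cycle of $\sigma_1$ and $\sigma_2$; for instance, with $d=2$, $\mu=\nu=(1,1)$, $g=0$, the tuple $(\mathrm{id},(12),(12),\mathrm{id})$ has centraliser of order $2$ while the stabiliser of the labelled factorisation and the labelled $\mathrm{Aut}(\pi)$ are both trivial. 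It is this label-preserving subgroup that is identified with $\mathrm{Aut}(\pi)$, and with that adjustment your orbit--stabiliser count $\left|\mathcal{F}(g,\mu,\nu)\right|=\sum_{[\pi]} d!/\left|\mathrm{Aut}(\pi)\right|$ goes through exactly as stated.
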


\begin{Remark}
We can modify \cref{thm:hursym} for disconnected simple double Hurwitz numbers by dropping the transitivity condition in \cref{def:hursym}.
\end{Remark}

As proved in \cite{goulden2014monotone} monotone double Hurwitz numbers appear as the coefficients of the HCIZ-integral. They can be defined as counts of factorisations as in \cref{def:hursym} by imposing an additional condition on the transpositions:
\begin{Definition}
Let $k$ be a non-negative integer. We call $\left(\sigma_1,\tau_1,\dots,\tau_b,\sigma_2\right)$ a \textit{monotone factorisation} of type $(g,\mu,\nu)$, if it is a factorisation of type $(g,\mu,\nu)$, such that:
\begin{enumerate}
\item [(7a)] If $\tau_i=(r_i\ s_i)$ with $r_i<s_i$, we have $s_i\le s_{i+1}$ for all $i=1,\dots,b-1$. 
\end{enumerate}
Let $\vec{\mathcal{F}}(g,\mu,\nu)$ be the set of all monotone factorisations of type $(g,\mu,\nu)$. Then we define the \textit{monotone double Hurwitz number} to be:
\begin{equation}h^{\le}_{b;\mu,\nu}=\frac{1}{d!}\left|\vec{\mathcal{F}}(g,\mu,\nu)\right|.\end{equation}
\end{Definition}

\begin{Remark}
Instead of requiring $s_i\le s_{i+1}$ in condition (7a), we can also require $s_i<s_{i+1}$, which yields the notion of \textit{strictly monotone double Hurwitz numbers}.
\end{Remark}

In \cite{zbMATH06586291}, a combinatorial interpolation between simple and monotone double Hurwitz numbers was introduced. The idea is to impose the monotonicity condition $(7a)$ only on the first $k$ transpositions.
\begin{Definition}
Let $d,g,\mu,\nu$ be as in \cref{def:hurwitz} and let $k$ be a non-negative integer. We define a mixed factorisation of type $(g,\mu,\nu,k)$ to be a factorisation $(\sigma_1,\tau_1,\dots,\tau_b,\sigma_2)$ of type $(g,\mu,\nu)$ satisfying the following additional condition:
\begin{enumerate}
\item [(7b)] If $\tau_i=(r_i\ s_i)$ with $r_i<s_i$, we have $s_i\le s_{i+1}$ for all $i=1,\dots,k-1$. 
\end{enumerate}
Let $\mathcal{F}(g,\mu,\nu,k)$ be the set of all mixed factorisations of type $(g,\mu,\nu,k)$. Then we define the \textit{mixed Hurwitz number} to be:
\begin{equation}
h^{(2),\le}_{k,b-k;\mu,\nu}=\frac{1}{d!}\left|\mathcal{F}(g,\mu,\nu,k)\right|.
\end{equation}
\end{Definition}

Fixing the length $\mu$ and $\nu$, we can view mixed Hurwitz numbers as a function
\begin{align}
h^{(2),\le}_{k,b-k}:\mathbb{N}^{\ell(\mu)}\times\mathbb{N}^{\ell(\nu)}&\to\mathbb{Q}\\(\mu,\nu)&\mapsto h^{(2),\le}_{k,b-k;\mu,\nu},
\end{align}
where $\ell(\mu)$ (resp. $\ell(\nu)$) is the length of $\mu$ (resp. $\nu$). For each $I\subset\{1,\dots,\ell(\mu)\},J\subset\{1,\dots,\ell(\nu)\}$ we obtain linear equations $\sum_{i\in I}\mu_i-\sum_{j\in J}\nu_j=0$, where the $\mu_i$ (resp. $\nu_j$) are the coordinates in $\mathbb{N}^{\ell(\mu)}$ (resp. $\mathbb{N}^{\ell(\nu)}$). The equations induce a hyperplane arrangement $\mathcal{W}$ in $\mathbb{N}^{\ell(\mu)}\times\mathbb{N}^{\ell(\nu)}$. By considering the complement on $\mathcal{W}$ this hyperplane arrangement divides $\mathbb{N}^{\ell(\mu)}\times\mathbb{N}^{\ell(\nu)}$ into chambers $C$.

\begin{thm}[\cite{GJV}, \cite{zbMATH06586291}]
\label{thm:gjv}
The function $h^{(2),\le}_{k,b-k}$ described above is piecewise polynomial, i.e. for each chamber $C$ there exists a polynomial $P^{(2),\le}_{k,b-k}(C)\in\mathbb{Q}[\underline{M},\underline{N}]$, where $\underline{M}=M_1,\dots,M_{\ell(\mu)}$ and $\underline{N}=N_1,\dots,N_{\ell(\nu)}$, such that $h^{(2),\le}_{k,b-k;\mu,\nu}=P^{(2),\le}_{k,b-k}(C)(\mu,\nu)$.
\end{thm}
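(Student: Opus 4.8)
The plan is to reduce the statement to the disconnected count and then to convert the resulting factorisation enumeration into a character sum over the symmetric group, isolating the entire $(\mu,\nu)$-dependence in the irreducible characters and reading off piecewise polynomiality from the operator formalism. First I would pass from $h^{(2),\le}_{k,b-k;\mu,\nu}$ to its disconnected analogue $h^{(2),\le,\circ}_{k,b-k;\mu,\nu}$, obtained by dropping transitivity. It suffices to prove piecewise polynomiality for the disconnected numbers: the connected case then follows by induction on $d$ from the inclusion–exclusion of the remark after \cref{thm:hursym}, since products of piecewise-polynomial functions in complementary sets of the variables are again piecewise polynomial and all their walls are coordinate subsum hyperplanes, hence lie in $\mathcal W$. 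By \cref{thm:hursym} (with transitivity dropped) the disconnected number is $\tfrac{1}{d!}$ times the number of tuples $(\sigma_1,\tau_1,\dots,\tau_b,\sigma_2)$ with $\sigma_2\tau_b\cdots\tau_1\sigma_1=\mathrm{id}$, prescribed cycle types $\mu,\nu$, and the monotonicity condition (7b) on $\tau_1,\dots,\tau_k$.

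Next I would rewrite this enumeration inside the group algebra $\mathbb{C}[S_d]$ using the Jucys–Murphy elements $J_i=\sum_{j<i}(j\,i)$. The sum over the $b-k$ unconstrained transpositions $\tau_{k+1},\dots,\tau_b$ contributes the central element $A^{b-k}$, where $A=\sum_i J_i$ is the sum of all transpositions, while the sum over the monotone block $\tau_1,\dots,\tau_k$ contributes the complete homogeneous symmetric polynomial $H_k=h_k(J_1,\dots,J_d)$, which by Jucys's theorem is again central. Writing $K_\mu$ for the class sum of cycle type $\mu$, the disconnected number is therefore a fixed scalar multiple of the coefficient of the identity in $K_\nu\,A^{b-k}H_k\,K_\mu$. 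Decomposing $\mathbb{C}[S_d]=\bigoplus_{\lambda\vdash d}\mathrm{End}(V_\lambda)$ and using that every factor is central, each acts on $V_\lambda$ by a scalar: $K_\mu,K_\nu$ by the normalised characters, $A$ by the content sum $\mathbf c(\lambda)=\sum_{\square\in\lambda}c(\square)$, and $H_k$ by $h_k$ evaluated on the multiset of contents of $\lambda$. This yields a closed formula of the form
\[
h^{(2),\le,\circ}_{k,b-k;\mu,\nu}=\sum_{\lambda\vdash d}\frac{\chi^\lambda(\mu)}{\prod_i\mu_i}\,\frac{\chi^\lambda(\nu)}{\prod_j\nu_j}\,\mathbf c(\lambda)^{\,b-k}\,h_k\!\left(c(\square):\square\in\lambda\right),
\]
in which, with $\ell(\mu)$ and $\ell(\nu)$ fixed, the whole dependence on the values of $\mu,\nu$ sits in the characters, the two content factors depending only on $\lambda$.

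The final step, and the main obstacle, is to extract piecewise polynomiality from this formula: the sum over $\lambda\vdash d$ is infinite as $d=\sum_i\mu_i$ grows, so polynomiality is not apparent. Here I would follow the semi-infinite-wedge approach of \cite{GJV} and \cite{johnson2015}: the normalised characters $\chi^\lambda(\mu)/\prod_i\mu_i$ are realised by the bosonic operators $\alpha_{-\mu_i}$ on Fock space, their $\nu$-counterparts by $\alpha_{\nu_j}$, and the content factors $\mathbf c(\lambda)^{b-k}h_k(\cdots)$ by a diagonal operator $\mathcal D$ that is polynomial in the energy. The Hurwitz number then becomes a vacuum expectation $\langle \prod_j\alpha_{\nu_j}\,\mathcal D\,\prod_i\alpha_{-\mu_i}\rangle$, which Wick's theorem reduces to a finite sum over pairings. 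Within each region where no partial sum $\sum_{i\in I}\mu_i-\sum_{j\in J}\nu_j$ changes sign, the relevant residues are selected consistently, and each pairing contributes a genuine polynomial in $\underline M,\underline N$; the walls are then exactly the resonance hyperplanes cutting out $\mathcal W$.

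The structural point that makes the mixed case no harder than the purely simple one is that the monotonicity condition (7b) only replaces the diagonal operator $\mathcal D$ by another polynomial diagonal operator, sandwiched between the same bosonic operators; consequently the polynomiality and wall structure established for simple double Hurwitz numbers in \cite{GJV} transfer essentially verbatim. I therefore expect the genuinely technical labour to be the contour/residue bookkeeping showing that the infinite character sum collapses to a single polynomial on each chamber and that wall-crossings occur precisely along the hyperplanes $\sum_{i\in I}\mu_i=\sum_{j\in J}\nu_j$; verifying that the new content factor $h_k(c(\square):\square\in\lambda)$ does not disturb the energy-polynomiality of $\mathcal D$ is the key compatibility check, and it is immediate since symmetric functions of the contents act diagonally and polynomially in the infinite wedge.
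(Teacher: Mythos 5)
Your proposal is correct in outline, but note first that the paper contains no proof of Theorem \ref{thm:gjv} at all: the result is imported from \cite{GJV} and \cite{zbMATH06586291}, and what you have written is essentially a reconstruction of the proof in those references --- the reduction to the disconnected count, the Jucys--Murphy rewriting of the monotone block as $h_k(J_1,\dots,J_d)$ and of the free block as a power of the class sum of transpositions, and the extraction of piecewise polynomiality through the semi-infinite wedge follow the character-theoretic route of \cite{zbMATH06586291} combined with the operator formalism used for the simple case in \cite{GJV} and refined in \cite{johnson2015}. The paper's own machinery takes a genuinely different path to (a generalisation of) this statement: it translates the factorisation count into triply interpolated monodromy graphs (Construction \ref{ref:con}, Lemma \ref{lem:mult}) and proves piecewise polynomiality graph by graph, in genus $0$ by iterated Faulhaber summation over the counters of each chain-path (Proposition \ref{prop:piece}, Lemma \ref{lem:cou}), and in higher genus by summing polynomials over lattice points of polytopes (Algorithm \ref{alg:2}). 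What the graph/Ehrhart approach buys is an effective algorithm for the polynomial in every chamber and access to wall-crossing (Section \ref{sec:ch}); what it does not buy is genuine (as opposed to quasi-) polynomiality in higher genus, for which the paper explicitly falls back on Theorem \ref{thm:gjv} (Remark \ref{re:poly}) --- so your route is not redundant within the paper's logic; it is the external input the paper leans on.

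Two points in your sketch deserve tightening. In the connected-to-disconnected reduction you should observe that, because distinct components involve disjoint symbols, the merge of the monotone prefixes into a single weakly increasing sequence is \emph{unique}, while only the unconstrained suffix contributes a multinomial coefficient; this is exactly the bookkeeping carried out in the paper's own Lemma \ref{lem:indep}, with the sums over $p_1+\dots+p_s=p$ etc. Second, your diagonal operator $\mathcal{D}$ is not ``polynomial in the energy'': its eigenvalue $\mathbf{c}(\lambda)^{b-k}\,h_k\bigl(c(\square):\square\in\lambda\bigr)$ is a shifted symmetric function of $\lambda$, i.e.\ a polynomial in the contents rather than in $|\lambda|$, and this --- not energy-polynomiality --- is the hypothesis the $\mathcal{E}$-operator calculus of \cite{johnson2015} requires. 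With that correction, the residue and commutator bookkeeping you defer is precisely Johnson's argument and does go through, with walls exactly the hyperplanes $\sum_{i\in I}\mu_i=\sum_{j\in J}\nu_j$ of $\mathcal{W}$.
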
 

\subsection{Hurwitz numbers in terms monodromy graphs}
\label{subsec:trophur}

In this section, we recall the connection between Hurwitz numbers and so-called monodromy graphs. As a first step we associate maps between graphs to certain factorisations in the symmetric group in the following construction.

\begin{Construction}
\label{ref:consim}
Let $(\sigma_1,\tau_1,\dots,\tau_b,\sigma_2)$ be a factorisation of type $(g,\mu,\nu)$ with $\sigma_1$ as in \cref{equ:per}. We associate a graph with labeled vertices and edges and with a map to the interval $[0,\dots,b+1]$ as follows:\vspace{\baselineskip}\\
\textit{Constructing the graph.}
\begin{enumerate}
\item We start with $\ell(\mu)$ vertices over $0$, labeled by $\sigma_1^1,\dots,\sigma_1^{\ell(\mu)}$. We will call these vertices \textit{in-ends}. Moreover, we attach an edge $e_v$ to each vertex $v$ over $0$ which maps to $(0,1)$. We label these edge attached to the vertex labeled $\sigma_1^j$ by the same label. 
\item We define $\Sigma_{i+1}=\tau_i\cdots\tau_1\sigma_1$ for $i=1,\dots,m$, $\Sigma_0=\sigma_1$ and $\Sigma_{b+1}=(\sigma_2)^{-1}$. Comparing $\Sigma_i$ and $\Sigma_{i+1}$, the transposition $\tau_{i}$ either joins two cycles of $\Sigma_i$ or cuts one cycle in two.
\end{enumerate}

Assuming the preimage of $[0,i)$ has been constructed, repeat the steps (3)--(4) until $i=b$:
\begin{enumerate}
\item [(3a)] [\textbf{Join}] If $\tau_i$ joins the cycles $\Sigma_{i-1}^s$ and $\Sigma_{i-1}^{s'}$ to a new cycle $\Sigma'$, we create a vertex over $i$ labeled $\tau_i$. This vertex is joined with the edges corresponding to $\Sigma_{i-1}^s$ and $\Sigma_{i-1}^{s'}$. These edges map to some interval $(a_s,i)$ and $(a_{s'},i)$ respectively, where $a_{s}$ (resp. $a_{s'}$) is the image of the other vertex adjacent to the edge corresponding to $\Sigma_{i-1}^{s}$ (resp. $\Sigma_{i-1}^{s'}$). We call those edges the \textit{incoming edges at }$\tau_i$. Moreover, we attach an edge to $\tau_i$ mapping to $(i,i+1)$, which we label by $\Sigma'$. We call this edge the \textit{outgoing edge at }$\tau_i$.
\item [(3b)] [\textbf{Cut}] If $\tau_i$ cuts $\Sigma_{i-1}^s$ into $\Sigma'$ and $\Sigma''$, we create a vertex over $i$ labeled $\tau_i$. We attach one edge connecting $\tau_i$ to the edge corresponding to $\Sigma_{i-1}^s$, which maps to $(a_s,i)$ as above and attach two edges mapping to $(i,i+1)$ labeled $\Sigma'$ and $\Sigma''$ respectively. As above, we call the edge mapping to $(a_s,i)$ the \textit{ingoing edge at }$\tau_i$ and the edges mapping to $(i,i+1)$ \textit{outgoing edges at} $\tau_i$.
\item [(4)] We extend those edges which so far are only adjacent to one vertex, such that the edge $e$ maps to $(a_e,i+1)$, where $a_e$ is the image of the vertex adjacent to $e$.
\item [(5)] When $i=b$ is reached, the leaves of the graph which are not adjacent to in-ends correspond to the cycles of $\Sigma_{b+1}$. We create vertices over $b+1$ which we label $(\sigma_2^{-1})^1,\dots,(\sigma_2^{-1})^{\ell(\nu)}$ and connect the corresponding edges to those vertices.
\end{enumerate}
\textit{Relabelling the graph.}
\begin{enumerate}
\item [(6)] We drop the labels $\tau_i$ at the vertices of $1,\dots,b$.
\item [(7)] We label the in-ends (resp. out-ends) by $1,\dots,\ell(\mu)$ (resp. $1,\dots,\ell(\nu)$) according to the labels of $\sigma_1$ and $\sigma_2$.
\item [(8)] If a vertex or an edge is labeled by a cycle $\sigma$, we replace the label by the length of the cycle.
\end{enumerate}

We obtain a graph $\Gamma$ with a map to $[0,b+1]$. We call $\Gamma$ together with the map the \textit{monodromy graph of type} $(g,\mu,\nu)$ \textit{associated to} $(\sigma_1,\tau_1,\dots,\tau_b,\sigma_2)$.
\end{Construction}

An example for a more general version of this construction can be found in \cref{ex:graph}.

\begin{Remark}
We can view the graph $\Gamma$ as an abstract tropical curve, where we can choose arbitrary lengths. Furthermore, the line associated to the interval $[0,b+1]$ is an abstract tropical curve as well (it is called the \textit{tropical projective line}: the point $0$ in the interval corresponds to $0$ in the projective line, the point $[b+1]$ in the interval corresponds to $\infty$ in the projective line). The map $\Gamma\to[0,b+1]$ can be viewed as a tropical cover of degree $d$, where the edges adjacent to vertices over $0$ (resp. $\infty$) yield the profile $\mu$ (resp. $\nu$). The ramification points in $\Gamma$ are the $3-$valent vertices and the branch points in $[0,b+1]$ are their images.
\end{Remark}

With the next definition we obtain a classification of the graphs we obtain from \cref{ref:consim}.
\begin{Definition}
A monodromy graph $\varGamma$ of type $(g,\mu,\nu)$ is a graph with a map to $[0,b+1]$ (where $b=2g-2+\ell(\mu)+\ell(\nu)$) with the following properties:\vspace{\baselineskip}\\
   \textit{Graph/Map conditions.}
 \begin{enumerate}
  \item The graph $\Gamma$ is a connected.
  \item The first Betti number of $\Gamma$ is $g$.
  \item The map sends vertices to integers, we call the image $i$ of a vertex its position. Moreover, the map sends edges to open intervals. For a vertex of position $i$, we call edges mapped to $(a,i)$ for $a<i$ \textit{incoming edges at }$i$ and edges mapped to $(i,a)$ for $a>i$ \textit{outgoing edges at }$i$.
  \item The graph has $\ell(\mu)+\ell(\nu)$ leaves. There are $\ell(\mu)$ leaves mapped to $0$ labeled by $1,\dots,\ell(\mu)$ and $\ell(\nu)$ leaves over $b+1$ labeled by $1,\dots,\ell(\nu)$.
  \item Over each integer $i\in[0,b]$, there is exactly one vertex which locally looks like one of the graphs in \cref{fig:ver}. We call these vertices \textit{inner vertices}.
 \end{enumerate}
\begin{figure}[ht]
\begin{center}
\begin{tikzpicture}
\draw (-2,-2)--(0,0); 
\draw (-2,2)--(0,0);
\draw (0,0)--(2,0);
\draw (-2,-3)--(2,-3);
\draw[fill=black] (0,-3) circle (2pt) node[anchor=south east] {$i$};
\end{tikzpicture}\hspace{15pt}
\begin{tikzpicture}
\draw (-2,0)--(0,0); 
\draw (2,2)--(0,0);
\draw (0,0)--(2,-2);
\draw (-2,-3)--(2,-3);
\draw[fill=black] (0,-3) circle (2pt) node[anchor=south east] {$i$};
\end{tikzpicture}
\caption{Local structure of the map for a monodromy graph.}
\label{fig:ver}
\end{center}
\end{figure}
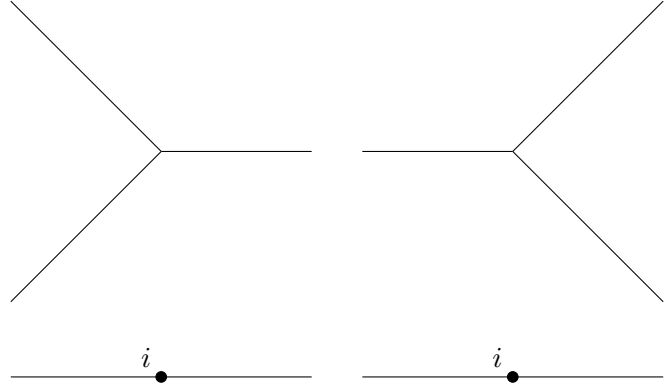
 \textit{Weight conditions.}
 \begin{enumerate}
  \setcounter{enumi}{5}
  \item We assign a positive integer weight $\omega(e)$ to each edge $e$. The in-end labeled $i$ has weight $\mu_i$. The out-end labeled $j$ has weight $\nu_j$.
  \item At each inner vertex, the sum of the weights of incoming edges equals the sum of the weights of outgoing edges.
  \end{enumerate}
This is the \textit{balancing condition for monodromy graphs}, which comes from the observation that by definition a monodromy graph is a combinatorial type of a tropical morphism (see e.g. \cite{BBMopenhurwitz}).
An isomorphism of monodromy graphs $\Gamma\to[0,b+1]$ and $\Gamma'\to[0,b+1]$ of type $(g,\mu,\nu)$ is a graph isomorphism $f:\Gamma\to\Gamma'$, such that
\begin{equation}
\begin{tikzcd}
\Gamma \arrow{rr}{f} \arrow {dr} & & \arrow{dl} \Gamma'\\
& {[0,b+1]} &
\end{tikzcd}
\end{equation}
commutes.
\end{Definition}

We now define tropical simple double Hurwitz numbers in terms of monodromy graphs.

\begin{Definition}
Let $d$ be a positive integer, $\mu,\nu$ two ordered partitions of $d$ and let $g$ be a non-negative integer. Furthermore, let $b=2g-2+\ell(\mu)+\ell(\nu)$, then we define the \textit{tropical simple double Hurwitz numbers}
\begin{equation}
\mathbb{T}h^{(2)}_{b;\mu,\nu}=\sum\frac{1}{\left|\mathrm{Aut}\left(\Gamma\to[0,b+1]\right)\right|}\prod\omega(e),
\end{equation}
where we sum over all monodromy graphs $\Gamma\to[0,b+1]$ of type $(g,\mu,\nu)$ and take the product of all inner edges $e$ of $\Gamma$.
\end{Definition}

Their relation to the algebro-geometric counterparts is given by the following theorem. It is proved by analysing how many factorisations of type $(g,\mu,\nu)$ yield the same monodromy graph of type $(g,\mu,\nu)$.

\begin{thm}[\cite{CJM}]
Let $d$ be a positive integers, $\mu,\nu$ two ordered partitions of $d$ and let $g$ be a non-negative integer. Furthermore, let $b=2g-2+\ell(\mu)+\ell(\nu)$, then
\begin{equation}
h^{(2)}_{b;\mu,\nu}=\mathbb{T}h^{(2)}_{b;\mu,\nu}
\end{equation}
\end{thm}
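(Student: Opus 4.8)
The plan is to use \cref{thm:hursym} to replace the algebro-geometric number $h^{(2)}_{b;\mu,\nu}$ by $\tfrac{1}{d!}\lvert\mathcal{F}(g,\mu,\nu)\rvert$, and then to organise the set $\mathcal{F}(g,\mu,\nu)$ according to the monodromy graph that \cref{ref:consim} assigns to each factorisation. Since the construction is manifestly compatible with isomorphisms of the target graph, it descends to a map from $\mathcal{F}(g,\mu,\nu)$ to the finite set of isomorphism classes of monodromy graphs of type $(g,\mu,\nu)$. Writing $N(\Gamma)$ for the number of factorisations lying over a fixed class $\Gamma$, we have $\lvert\mathcal{F}(g,\mu,\nu)\rvert=\sum_\Gamma N(\Gamma)$, so the theorem reduces to the single fibre identity
\begin{equation}
N(\Gamma)=\frac{d!}{\lvert\mathrm{Aut}(\Gamma\to[0,b+1])\rvert}\prod_e\omega(e),
\end{equation}
with the product over the inner edges of $\Gamma$; summing over all $\Gamma$ and dividing by $d!$ then reproduces exactly the defining sum of $\mathbb{T}h^{(2)}_{b;\mu,\nu}$.

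To prove this count I would first rigidify. Rather than counting factorisations $f$ with $\mathrm{Construction}(f)\cong\Gamma$, I count pairs $(f,\phi)$ where $\phi\colon\mathrm{Construction}(f)\xrightarrow{\sim}\Gamma$ is a chosen isomorphism; call this number $\tilde N(\Gamma)$. Since these isomorphisms form an $\mathrm{Aut}(\Gamma)$-torsor over each admissible $f$, we get $\tilde N(\Gamma)=\lvert\mathrm{Aut}(\Gamma)\rvert\cdot N(\Gamma)$, and it suffices to show $\tilde N(\Gamma)=d!\prod_e\omega(e)$. The point of the rigidified count is that every edge of $\Gamma$ is now individually marked, so the local reconstruction is free of the ambiguity caused by symmetric edges.

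The rigidified count is then carried out by reconstructing the factorisation slice by slice, reading $\Gamma$ from position $0$ to position $b+1$, using the key structural fact that over each intermediate slice the cycles of $\Sigma_i$ are in bijection with the edges crossing that slice, their lengths being the edge weights. First I count the choices for $\sigma_1$: a permutation with labelled cycles of lengths $\mu_1,\dots,\mu_{\ell(\mu)}$ is obtained by distributing $\{1,\dots,d\}$ and fixing a cyclic order in each cycle, giving $d!/\prod_i\mu_i$ possibilities. Crossing an inner vertex, the marked decoration on the incoming edges pins down the admissible $\tau_i$: at a join of incoming weights $a,b$ there are exactly $ab$ transpositions with one point in each cycle, each merging them into the prescribed outgoing cycle; at a cut of incoming weight $c$ there are exactly $c$ transpositions producing the prescribed ordered pair of outgoing cycles (in the equal-weight case $a=b$ one gets $c/2$ unordered transpositions, each paired with the two ways of matching the resulting cycles to the two marked outgoing edges, again $c$ in total). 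As $\sigma_2$ is then forced by $\Sigma_{b+1}$ and automatically carries the labelling dictated by the out-ends, multiplying the local factors yields
\begin{equation}
\tilde N(\Gamma)=\frac{d!}{\prod_i\mu_i}\prod_{v\ \mathrm{join}}\omega(e_{v,1})\,\omega(e_{v,2})\prod_{v\ \mathrm{cut}}\omega(e_v),
\end{equation}
the last two products running over the incoming edges at each inner vertex.

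Finally I would telescope. Every edge whose higher-position endpoint is an inner vertex is an incoming edge at exactly one inner vertex, and these edges are precisely the in-ends together with the inner edges (the out-ends are excluded, their higher endpoint being a leaf over $b+1$). Hence the double product over incoming edges equals $\bigl(\prod_i\mu_i\bigr)\prod_e\omega(e)$ with $e$ ranging over inner edges, the in-end factor cancels the denominator, and $\tilde N(\Gamma)=d!\prod_e\omega(e)$, as needed. I expect the main obstacle to be the careful local bookkeeping at the vertices: verifying that the reconstruction data is in honest bijection with the rigidified factorisations (so that the balancing, transitivity and genus conditions defining a monodromy graph are met automatically, the first Betti number $g$ being forced by the join/cut pattern), and checking that the equal-weight cut contributes the same factor $c$ as the generic one. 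The rigidification is exactly the device that makes this uniform and absorbs all the symmetry into the single factor $\lvert\mathrm{Aut}(\Gamma)\rvert$.
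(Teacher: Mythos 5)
Your proposal is correct and takes essentially the same route as the cited proof in \cite{CJM}, which the paper echoes in its own \cref{lem:mult}: reduce to the factorisation count via \cref{thm:hursym}, fibre it over monodromy graphs, and count local choices at each inner vertex, namely $\omega(e_{v,1})\omega(e_{v,2})$ at a join and $\omega(e_v)$ at a cut, with the symmetric equal-weight cut accounted for by automorphisms. Your rigidification by pairs $(f,\phi)$ is merely a tidier bookkeeping of the case analysis the paper does explicitly (distinguishable cycle evolutions giving a factor $2$, indistinguishable ones contributing to $\mathrm{Aut}(\Gamma)$), and your telescoping of the incoming-edge weights, cancelling $\prod_i\mu_i$ against the $d!/\prod_i\mu_i$ labelled-cycle choices for $\sigma_1$, correctly reproduces the multiplicity $\frac{1}{|\mathrm{Aut}(\Gamma)|}\prod_e\omega(e)$ over inner edges.
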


A similar approach was taken in \cite{do2015monotone} in order to give a tropical interpretation for monotone double Hurwitz numbers, when $\nu=(a,\dots,a)$ for some positive integer $a$. We generalise this approach to monotone double Hurwitz numbers (and more general cases) in \cref{sec:gr}.

\section{Triply interpolated Hurwitz numbers via monodromy graphs}
\label{sec:gr}
We define triply interpolated Hurwitz numbers, which are a combinatorial interpolation between simple, monotone and strictly monotone Hurwitz numbers.

\begin{Definition}
Let $g$ be a non-negative integer, $\mu,\nu$ partitions of the same positive integer, $p,q,r$ non-negative integers, such that $p+q+r=2g-2+\ell(\mu)+\ell(\nu)$. A triply interpolated factorisation of type $(g,\mu,\nu,p,q,r)$ is a factorisation of type $(g,\mu,\nu)$ (see conditions (1)--(6) \cref{def:hursym}), such that:
\begin{enumerate}
\item[(7c)] If $\tau_i=(r_i\;s_i)$ with $r_i<s_i$, we have $s_{i+1}\ge s_{i}$ for $i=1,\dots,p-1$ and $s_{i+1}>s_{i}$ for $i=p,\dots,p+q-1$.
\end{enumerate}
We denote the set of all triply interpolated factorisations by $\mathcal{F}(g,\mu,\nu,p,q,r)$ and define the triply interpolated Hurwitz number by
\begin{equation}
H_{p,q,r;\mu,\nu}^{\le,<,(2)}=\frac{1}{d!}\left|\mathcal{F}(g,\mu,\nu,p,q,r)\right|.
\end{equation}
\end{Definition}

\begin{Remark}
We note that for $p=q=0$, we obtain the simple double Hurwitz numbers, for $q=r=0$ the monotone double Hurwitz numbers and for $p=r=0$ the strictly monotone double Hurwitz numbers associated to the data.
\end{Remark}

In \cite{do2015monotone}, Lemma $7$, the following result was proved for monotone factorisations. It may be easily adapted for triply interpolated factorisations. For the convenience of the reader we give the proof. We work in the symmetric group ring. For an introduction to these techniques and their connection to Hurwitz numbers, see e.g. \cite{CM} Chapter 9.

\begin{Lemma}
\label{lem:indep}
Fix a permutation $\sigma$ of cycle type $\mu$ and non-negative integers $p,q,r$. The number of triply interpolated factorisation $(\sigma_1,\tau_1,\dots,\tau_r,\sigma_2)$ of type $(g,\mu,\nu,p,q,r)$, satisfying $\sigma_1=\sigma$ does not depend on the choice of $\sigma$ for $p=0$ or $q=0$.
 \end{Lemma}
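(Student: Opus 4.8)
\emph{The plan} is to translate the count into coefficient extraction in the group ring $\mathbb{C}[S_d]$ and to exploit the centrality of symmetric functions of the Jucys--Murphy elements. Since condition (2) forces $\sigma_2=(\tau_b\cdots\tau_1\sigma)^{-1}$ and inversion preserves cycle type, a triply interpolated factorisation with $\sigma_1=\sigma$ is the same datum as a sequence $(\tau_1,\dots,\tau_b)$ of transpositions satisfying (7c) (together with the transitivity (5) and the labellings (6)) for which $\mathcal{C}(\tau_b\cdots\tau_1\sigma)=\nu$. Setting aside (5) and (6) for the moment, I would encode all admissible $\tau$-sequences in the single element
\[
X \;=\; \sum_{(\tau_1,\dots,\tau_b)\ \mathrm{admissible}} \tau_b\cdots\tau_1 \;\in\; \mathbb{C}[S_d],
\]
so that, writing $[\pi]X$ for the coefficient of $\pi$ and $K_\nu$ for the set of permutations of cycle type $\nu$, the desired count is $N(\sigma)=\sum_{\rho\in K_\nu}[\rho\sigma^{-1}]X$, because $\mathcal{C}(\pi\sigma)=\nu$ is equivalent to $\pi\in K_\nu\sigma^{-1}$.

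The key step is to recognise $X$ as a product of central elements precisely when $p=0$ or $q=0$. The last $r$ transpositions are unconstrained, and $\sum_{\tau}\tau=T$, the class sum of all transpositions, is central; the free block therefore contributes the factor $T^{r}$. The remaining monotone block is then a \emph{single} block: weakly monotone for $q=0$, strictly monotone for $p=0$. Recalling $J_k=\sum_{i<k}(i\,k)$, the standard expansions
\[
h_p(J_1,\dots,J_d)=\!\!\sum_{j_1\le\cdots\le j_p}\!\! J_{j_1}\cdots J_{j_p},
\qquad
e_q(J_1,\dots,J_d)=\!\!\sum_{j_1<\cdots<j_q}\!\! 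J_{j_1}\cdots J_{j_q}
\]
become, after substituting $J_{j}=\sum_{i<j}(i\,j)$, exactly the sums over weakly (resp.\ strictly) monotone transposition sequences. A bookkeeping point is that these produce the product in the order $\tau_1\cdots\tau_p$ rather than the order $\tau_p\cdots\tau_1$ occurring in $\tau_b\cdots\tau_1$; the two agree because the anti-automorphism $g\mapsto g^{-1}$ fixes every central element (in $S_d$ each permutation is conjugate to its inverse, so class functions are inversion-invariant), and $h_p(J),e_q(J)$ are central by Jucys--Murphy. Thus $X=T^{r}h_p(J)$ (case $q=0$) or $X=T^{r}e_q(J)$ (case $p=0$), a product of central elements, hence central. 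This is exactly where the hypothesis $p=0$ or $q=0$ enters: if both are positive, condition (7c) couples the two blocks through $s_p<s_{p+1}$, so the monotone part is \emph{not} the independent central product of the two blocks and there is no reason for it to be central.

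Granting that $X$ is central, its coefficient function $\pi\mapsto[\pi]X$ is a class function, i.e.\ $[\alpha]X=[g^{-1}\alpha g]X$ for all $g$. For any $g\in S_d$ I then compute
\[
N(g\sigma g^{-1})=\sum_{\rho\in K_\nu}[\rho g\sigma^{-1}g^{-1}]X
=\sum_{\rho\in K_\nu}[g^{-1}\rho g\,\sigma^{-1}]X
=\sum_{\rho'\in K_\nu}[\rho'\sigma^{-1}]X=N(\sigma),
\]
using centrality in the middle equality and that conjugation permutes $K_\nu$ in the last. Hence the count depends only on the cycle type $\mu$ of $\sigma$. The cycle labellings (6) merely multiply $N(\sigma)$ by $\prod_k m_k(\mu)!\cdot\prod_k m_k(\nu)!$, a factor depending on $\mu,\nu$ alone, and so do not affect the conclusion.

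The one genuine subtlety, which I expect to be the main obstacle, is the transitivity condition (5), which $X$ ignores. I would recover it by induction on $d$: the disconnected count $N^{\circ}(\sigma)$ is cycle-type-invariant by the argument above, and a disconnected admissible sequence decomposes along its orbit partition into admissible sequences on the blocks. Crucially, distinct blocks use disjoint values $s_i$, so the global monotone order is \emph{forced} by the within-block orders (the monotone block interleaves rigidly), while the $r$ free transpositions interleave in a number of ways depending only on the block sizes. Consequently the difference $N^{\circ}(\sigma)-N(\sigma)$, summed over all proper orbit partitions, is built from connected counts on smaller ground sets—independent of the chosen permutation by the inductive hypothesis—and purely size-dependent multinomial factors; since $N^{\circ}(\sigma)$ depends only on $\mu$, so does the connected count $N(\sigma)$. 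The centrality input is standard, so this transitivity bookkeeping is the only part requiring real care.
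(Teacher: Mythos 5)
Your proposal is correct and follows essentially the same route as the paper: expressing the non-transitive count as coefficient extraction against central elements of $\mathbb{C}[S_d]$ (class sums together with $h_p$, resp.\ $e_q$, of the Jucys--Murphy elements), deducing class-function invariance, and then recovering the transitive count by decomposing disconnected factorisations along orbits and inducting. Your extra bookkeeping --- the inversion anti-automorphism to fix the order of the monotone block, the rigid interleaving of monotone blocks versus the multinomial interleaving of the free transpositions, and the labelling factor --- only makes explicit details the paper's proof elides.
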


\begin{proof}
Fix a permutation $\sigma$ of cycle type $\mu$. Let $K_{\tau}^\bullet(\sigma)$ be the number of triply interpolated factorisations $(\sigma,\tau_1,\dots,\tau_b,\sigma_2)$ of type $\tau=(g,\mu,\nu,p,q,r)$ where we drop the transitivity condition. We can rewrite the equation as follows 
\begin{equation}
\sigma_2^{-1}\tau_b\cdots\tau_1=\sigma^{-1}.
\end{equation}
We see that $K_{\tau,k}^\bullet(\sigma)$ is the coefficient of $\sigma^{-1}$ in
\begin{align}
\label{equ:conj}
C_{\nu}h_p(J_2,\dots,J_{|\nu|})(C_{\kappa})^{r}\in\mathbb{C}[\mathcal{S}_d]\textrm{ for }q=0,\\
\label{equ:conj2}
C_{\nu}\sigma_q(J_2,\dots,J_{|\nu|})(C_{\kappa})^{r}\in\mathbb{C}[\mathcal{S}_d]\textrm{ for }p=0,
\end{align}
where $\kappa=(2,1,\dots,1)\vdash|\nu|$, $C_{w}$ denotes the conjugacy class of permutations with cycle type $w$, $h_i$ is the complete homogeneous symmetric polynomial of degree $i$, $\sigma_i$ is the elementary homogeneous symmetric polynomial of degree $i$ and $J_i$ denote the Jucys-Murphy elements \begin{equation}
J_i=(1,i)+\dots+(i-1,i)\in\mathbb{C}[\mathcal{S}_d]
\end{equation} for $i=2,\dots,|\nu|$. It is well known, that conjugacy classes and the symmetric polynomials in the Jucys-Murphy elements lie in the center of $\mathbb{C}[\mathcal{S}_d]$. Thus the expressions in \cref{equ:conj} and \cref{equ:conj2} are a linear combination of conjugacy classes and therefore all permutations in the same conjugacy class appear with the same coefficient. Thus $K_{\tau}^\bullet(\sigma)$ only depends on the conjugacy class of $\sigma$.\
Now let $K_{\tau,k}^\circ(\sigma)$ be the number of factorisations as above that satisfy the transitivity condition. If $\sigma$ is a $d-$cycle, where $d=\sum\mu_i$, then $K_{\tau}^\bullet(\sigma)=K_{\tau}^\circ(\sigma)$ and the result holds. For any permutation $\sigma$, set $\sigma=\Sigma_1\cdots\Sigma_{\ell(\mu)}$ be the decomposition in disjoint cycles. We can decompose every non-transitive factorisation into a union of transitive factorisations. This leads to the following formula
  \begin{equation}K_{\tau}^\bullet(\sigma)=K_{\tau}^\circ(\sigma)+\sum_{s=2}^{\ell(\mu)}\sum_{\substack{I_1\sqcup\dots\sqcup I_s=[\ell(\mu)]\\\mu^{(1)}\sqcup\dots\sqcup\mu^{(s)}=\mu\\\nu^{(1)}\sqcup\dots\sqcup\nu^{(s)}=\nu}}\sum_{\substack{p_1+\dots+p_s=p\\q_1+\dots+q_s=q\\r_1+\dots+r_s=r\\g_1+\dots+g_s=g}}\prod_{l=1}^sK_{\tau_l}^\circ(\Sigma_{I_l}),\end{equation}
where the summation is over partitions of $[\ell(\mu)]=\{1,\dots,\ell(\mu)\}$ into disjoint non-empty subsets $I_1\sqcup\dots\sqcup I_s$, ordered tuples of partitions $\mu^{(1)}\sqcup\dots\sqcup\mu^{(s)}$ (resp. $\nu^{(1)}\sqcup\dots\sqcup\nu^{(s)}$) whose union is $\mu$ (resp. $\nu$), such that $|\nu^l|=|\mu^l|$ and 
\begin{equation}
g_l=\frac{p_l+q_l+r_l+2-\ell(\mu)-\ell(\nu)}{2}
\end{equation}
and where we use the Notation $\tau_l=(g_l,\mu^{(l)},\nu^{(l)},p_l,q_l,r_l)$. Moreover, for $I\subset[\ell(\mu)]$, we let $\Sigma_I$ denote the permutation obtained by taking the product of all cycles $\Sigma_i$ for $i\in I$.
 We already proved that $K_{\tau,k}^\bullet(\sigma)$ only depends on the cycle type of $\sigma$ for $p=0$ or $q=0$, by induction on the length of $\mu$ the terms $K_{\tau}^\circ(\Sigma_{I_l})$ only depend on the cycle type of $\Sigma_{I_l}$, thus $K_{\tau,k}^\circ(\sigma)$ only depends on the cycle type of $\sigma$ and we are finished.
\end{proof}

Thus, in order to compute $H_{p,q,r;\mu,\nu}^{\le,<,(2)}$ --- when $p=0$ or $q=0$ --- we do not have to count all triply interpolated factorisations of type $(g,\mu,\nu,p,q,r)$, rather we may compute the number of triply interpolated factorisations $(\sigma_1,\tau_1,\dots,\tau_b,\sigma_2)$ of type $(g,\mu,\nu,p,q,r)$ with fixed $\sigma_1$ and multiply this number by $\frac{1}{d!}\cdot|\{\sigma\in S_d:\mathcal{C}(\sigma)=\mu\}|$ to obtain $H_{p,q,r;\mu,\nu}^{\le,<,(2)}$. We can thus simplify this counting problem with a smart choice of $\sigma_1$ (see (cref{equ:per}), We translate the counting problem to a problem of counting monodromy graphs as in \cite{CJM}, \cite{CJMa} and \cite{do2015monotone}. In the latter, the choice of $\sigma_1$ as in \cref{equ:per} was already utilised. To give our description of triply interpolated Hurwitz numbers in terms of monodromy graphs, we make the following choice for fixed $\mu=(\mu_1,\dots,\mu_{\ell(\mu)})$:
\begin{equation}
\label{equ:per}
\sigma=(1\cdots\mu_1)(\mu_1+1\cdots\mu_1+\mu_2)\cdots\left(\sum_{i=1}^{\ell(\mu)-1}\mu_i+1\cdots\sum_{i=1}^{\ell(\mu)}\mu_i\right),
\end{equation}
where the cycle $\sigma_1^s=\left(\sum_{i=1}^{s-1}\mu_i+1\cdots\sum_{i=1}^{s}\mu_i\right)$ is labeled by $s$. We define $M_{p,q,r;\mu,\nu}^{\le,<,(2)}$ to be the number of triply interpolated factorisations of type $(g,\mu,\nu,p,q,r)$ with $\sigma_1$ as in \cref{equ:per}. The number of permutations of cycle type $\mu$ with labeled cycles is 
\begin{equation}
\epsilon(\mu)=\frac{d!}{\mu_1\cdots \mu_{\ell(\mu)}}\end{equation} and we see that 
\begin{equation}\label{equ:handm}
H_{p,q,r;\mu,\nu}^{\le,<,(2)}=\frac{1}{d!}\epsilon(\mu)M_{p,q,r;\mu,\nu}^{\le,<,(2)}=\frac{1}{\mu_1\cdots \mu_{\ell(\mu)}}M_{p,q,r;\mu,\nu}^{\le,<,(2)}
\end{equation}
for $p=0$ or $q=0$. In particular \cref{equ:handm} is true for the extremal cases of monotone and strictly monotone Hurwitz numbers.\vspace{\baselineskip}

We will express $M_{p,q,r;\mu,\nu}^{\le,<,(2)}$ in terms of monodromy graphs and begin by associating a graph to a triply interpolated factorisation.
\begin{Construction}
\label{ref:con}
Let $(\sigma_1,\tau_1,\dots,\tau_b,\sigma_2)$ be a triply interpolated factorisation of type $(g,\mu,\nu,\allowbreak p,q,r)$ with $\sigma_1$ as in \cref{equ:per}. We equip the graph we obtain from \cref{ref:consim} steps (1)--(5) with additional structure:\vspace{\baselineskip}

\textit{Colouring the graph.}
\begin{enumerate}
\item [(6)]We colour all edges \textit{normal}.
\item [(7)]We colour all edges adjacent to in-ends \textit{dashed}.
\item [(8)]We repeat steps [(9a)] and [(9b)] for all transpositions $\tau_1,\dots,\tau_{p+q}$.
\item [(9a)] [\textbf{Cut}] If $\tau_i$ is a transposition as in (3a), then we assume $\tau_i=(a\ b)$ with $a<b$ and $a\in\Sigma_{i-1}^s$ and $b\in\Sigma_{i-1}^{s'}$. Then we colour the edge labeled $\Sigma_{i-1}^{s'}$ \textit{bold} and the outgoing egde at $\tau_i$ \textit{dashed}.
\item [(9b)] [\textbf{Join}] If $\tau_i$ is a transposition as in (3b), we colour the edge labeled $\Sigma_{i-1}^s$ \textit{bold}. For $\tau_i=(a\ b)$ with $a<b$, we colour the outgoing edge corresponding to the cycle containing $b$ \textit{dashed}.
\end{enumerate}
\textit{Distributing counters.}\\
We distribute a counter to all non-normal edges.
\begin{enumerate}
\item [(10)] We start by distributing $1$ to all edges adjacent to in-ends.
\item [(11)] For $\tau_i=(r_i\ s_i)$, where $r_i<s_i$, there is a unique way of expressing $s_i$ as follows: \begin{equation}s_i=\sum_{j=1}^l\mu_j+c,\end{equation} where $c<\mu_{j+1}$. Then we distribute $c$ to the outgoing dashed/bold edge adjacent to the vertex labeled $\tau_i$.
\end{enumerate}
\textit{Relabelling the graph.}
\begin{enumerate}
\item [(12)] We drop the labels $\tau_i$ at the vertices of $1,\dots,m$.
\item [(13)] We label the in-ends (resp. out-ends) by $1,\dots,\ell(\mu)$ (resp. $(1,\dots,\ell(\nu)$) according to the labels of $\sigma_1$ and $\sigma_2$.
\item [(14)] If a vertex or an edge is labeled by a cycle $\sigma$, we replace the label by the length of the cycle.
\end{enumerate}
We obtain a graph $\Gamma$. We call $\Gamma$ the \textit{triply interpolated monodromy graph of type} $(g,\mu,\nu,p,q,r)$ \textit{associated to} $(\sigma_1,\tau_1,\dots,\tau_b,\sigma_2)$.
\end{Construction}

\begin{Example}
\label{ex:graph}
On the left of \cref{fig:tu}, we illustrate the cut-and-join process for the following factorisation of type $(1,(2,2),(4))$: \begin{equation}((12)(34),(12),(23),(13),(1243)).\end{equation} In fact, this is a monotone factorisation, which can be viewed as a triply interpolated factorisation of type $(1,(2,2),(4),3,0,0)$ and the associated triply interpolated monodromy graph is illustrated on the right.
\end{Example}

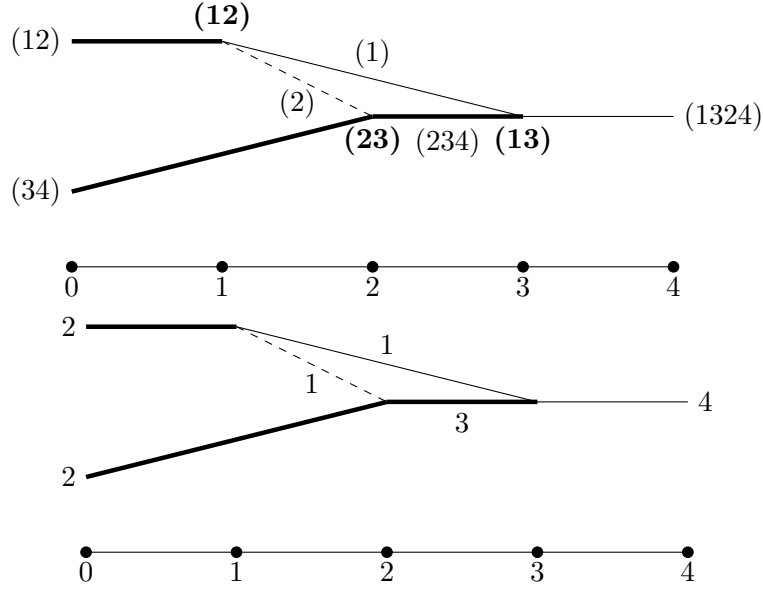
\begin{figure}
\begin{center}
\begin{tikzpicture}
\draw [line width=0.6mm] (0,0)--(2,0);
\draw [dashed] (2,0)--(4,-1) node[midway,anchor=north] {$(2)$};
\draw (2,0)--(6,-1) node[midway,anchor=south] {$(1)$};
\draw [line width=0.6mm] (4,-1)--(6,-1) node[midway,anchor=north] {$(234)$};
\draw (6,-1)--(8,-1);
\draw [line width=0.6mm] (0,-2)--(4,-1);
\draw (0,-3)--(8,-3);
\draw (0,0) node[anchor=east] {$(12)$};
\draw (2,0) node[anchor=south] {$\textbf{(12)}$};
\draw (4,-1) node[anchor=north] {$\textbf{(23)}$};
\draw (6,-1) node[anchor=north] {$\textbf{(13)}$};
\draw (0,-2) node[anchor=east] {$(34)$};
\draw (8,-1) node[anchor=west] {$(1324)$};
\draw[fill] (0,-3) circle (2pt) node[anchor=north] {$0$};
\draw[fill] (2,-3) circle (2pt) node[anchor=north] {$1$};
\draw[fill] (4,-3) circle (2pt) node[anchor=north] {$2$};
\draw[fill] (6,-3) circle (2pt) node[anchor=north] {$3$};
\draw[fill] (8,-3) circle (2pt) node[anchor=north] {$4$};
\end{tikzpicture}
\begin{tikzpicture}
\draw [line width=0.6mm] (0,0)--(2,0);
\draw [dashed] (2,0)--(4,-1) node[midway,anchor=north] {$1$};
\draw (2,0)--(6,-1) node[midway,anchor=south] {$1$};
\draw [line width=0.6mm] (4,-1)--(6,-1) node[midway,anchor=north] {$3$};
\draw (6,-1)--(8,-1);
\draw [line width=0.6mm] (0,-2)--(4,-1);
\draw (0,-3)--(8,-3);
\draw (0,0) node[anchor=east] {$2$};
\draw (0,-2) node[anchor=east] {$2$};
\draw (8,-1) node[anchor=west] {$4$};
\draw[fill] (0,-3) circle (2pt) node[anchor=north] {$0$};
\draw[fill] (2,-3) circle (2pt) node[anchor=north] {$1$};
\draw[fill] (4,-3) circle (2pt) node[anchor=north] {$2$};
\draw[fill] (6,-3) circle (2pt) node[anchor=north] {$3$};
\draw[fill] (8,-3) circle (2pt) node[anchor=north] {$4$};
\end{tikzpicture}
\caption{In the upper graph, the bold permutations correspond to transposition $\tau_i$, the other ones correspond to the cycles of the permutations $\tau_i\cdots\tau_1\sigma_1$. In the lower graph, the non-normal edges are bi-labeled, where the first number is the weight and the second is the counter.}
\label{fig:tu}
\end{center}
\end{figure}

We now classify the graphs we obtain from \cref{ref:con}. Moreover, we will understand how many triply interpolated factorisations yield the same monodromy graph. This result will be our main tool in the discussion of polynomiality.

\begin{Definition}
\label{def:mono}
A triply interpolated monodromy graph $\Gamma$ of type $(g,\mu,\nu,p,q,r)$ is a monodromy graph of type $(g,\mu,\nu)$ (where $p+q+r=2g-2+\ell(\mu)+\ell(\nu)$) with the following properties:\vspace{\baselineskip}

\textit{Colouring conditions.}\\
    The following conditions are only applied to edges adjacent to the first $p+q$ inner vertices.
  \begin{enumerate}
   \setcounter{enumi}{5}
  \item We colour the edges of the graph by the three colours: normal, bold and dashed, such that each inner vertex is one of the six types in \cref{fig:gra}.
  \item There are no normal in-ends.
  \item We call a connected path of bold edges beginning at an in-end a \textit{chain}.
  \item Let $C$ and $C'$ be two chains and let $f_C$ (resp. $f_{C'}$) be the position of the first inner vertex of $C$ (resp. $C'$) and let $l_C$ (resp. $l_{C'}$) be the position of the last inner vertex of $C$ (resp. $C'$). Then we require the intervals $[f_C,l_C]$ and $[f_{C'},l_{C'}]$ to have empty intersection.
  \item The intervals $[f_C,l_C]$ induce a natural ordering on the chains, namely $C<C'$ if $f_C<f_{C'}$. We require this ordering to be compatible with the ordering of the partition $\mu$ as follows: Let $C_1$ and $C_2$ be two chains of bold edges, $i_1$ and $i_2$ the respective in-ends, then we demand $C_1<C_2$ if and only if $i_1<i_2$.
  \end{enumerate}
  The ordering of the chains corresponds to the monotonicity condition as we will see later.\vspace{\baselineskip}\\
    \textit{Counter conditions.}
  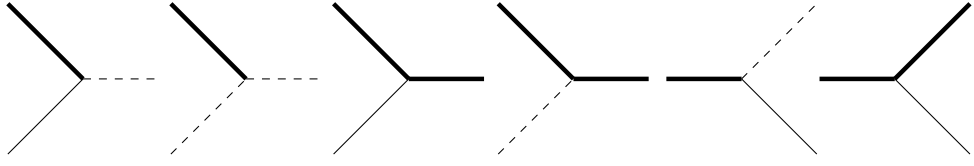
\begin{figure}[ht]
  \begin{tikzpicture}
  \draw [line width=0.6mm] (0,1)--(1,0);
  \draw (0,-1)--(1,0);
  \draw[dashed] (1,0)--(2,0);
  \end{tikzpicture} \begin{tikzpicture}
  \draw [line width=0.6mm] (0,1)--(1,0);
  \draw[dashed] (0,-1)--(1,0);
  \draw[dashed] (1,0)--(2,0);
  \end{tikzpicture} \begin{tikzpicture}
  \draw [line width=0.6mm] (0,1)--(1,0);
  \draw (0,-1)--(1,0);
  \draw [line width=0.6mm] (1,0)--(2,0);
  \end{tikzpicture} \begin{tikzpicture}
  \draw [line width=0.6mm] (0,1)--(1,0);
  \draw [dashed] (0,-1)--(1,0);
  \draw [line width=0.6mm] (1,0)--(2,0);
  \end{tikzpicture}\hspace{5pt}\begin{tikzpicture}
  \draw [line width=0.6mm] (0,0)--(1,0);
  \draw [dashed] (1,0)--(2,1);
  \draw (1,0)--(2,-1);
  \end{tikzpicture}\begin{tikzpicture}
  \draw [line width=0.6mm] (0,0)--(1,0);
  \draw [line width=0.6mm] (1,0)--(2,1);
  \draw (1,0)--(2,-1);
  \end{tikzpicture}
  \caption{Local colouring of the graph.}
\label{fig:gra}
\end{figure}
  \begin{enumerate}
  \setcounter{enumi}{10}
  \item We distribute a counter to each non-normal edge (thus, those inner edges are bi-labeled by the weight and the counter and the non-normal leaves are tri-labeled where the additional label is a number in $\{1,\dots,\ell(\mu)\}$ or $\{1,\dots,\ell(\nu)\}$).
 \item The counter for each in-end is set to $1$. 
 \item At each inner vertex $v$ mapping to $k$, there is a unique incoming bold edge, whose counter we denote by $i_k$ and a unique out-going non-normal edge, whose counter we denote by $o_k$. For $k=1,\dots,p$ we require $i_k\le o_k$ and for $k=p+1,\dots,p+q$, we require $i_k<o_k$.
 \item  Every non-normal edge arises from a unique chain of bold edges: Every bold edge is part of a unique chain and every dashed edge is sourced at a unique chain. Let the non-normal edge $e$ arise from the chain starting at the in-end labeled $i$. The counter $l_e$ of the non-normal edge $e$ is smaller or equal than $\mu_i$ and greater than $\mu_i-\omega(e)$.
 \end{enumerate}
 The last condition reflects that these cycles corresponding to each such edge $e$ should contain at least $\mu_i-(l_e-1)$ elements.
\end{Definition}

\begin{Definition}
Let $\Gamma$ be a triply interpolated monodromy graph of type $(g,\mu,\nu,p,q,r)$. We call the graph we obtain by removing the counters the \textit{reduced monodromy graph of }$\Gamma$.\\
A graph $\Gamma$ that appears as a triply interpolated monodromy graph of type $(g,\mu,\nu,p,q,r)$ without counters is called a \textit{reduced monodromy graph of type }$(g,\mu,\nu,p,q,r)$.
\end{Definition}

We called the graph we obtained from \cref{ref:con} a triply interpolated monodromy graph. The following Lemma justifies the choice of this term.

\begin{Lemma}
\label{le:fir}
The graphs obtained from \cref{ref:con} are triply interpolated monodromy graphs in the sense of \cref{def:mono}.
\end{Lemma}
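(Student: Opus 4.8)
The statement is a verification: one must check that the graph produced by \cref{ref:con} satisfies every clause of \cref{def:mono}. I would organise the proof into three blocks, matching the three families of conditions (graph/map, colouring, counters), and treat the first block as essentially inherited. Indeed, forgetting colours and counters, the output of steps (1)--(5) of \cref{ref:con} is the same object produced by the cut-and-join process of \cref{ref:consim}, so the underlying graph is already a monodromy graph of type $(g,\mu,\nu)$: connectedness follows from the transitivity in \cref{def:hursym}, the first Betti number equals $g$ from the standard count over the $b=2g-2+\ell(\mu)+\ell(\nu)$ trivalent inner vertices, the leaves over $0$ and $b+1$ are governed by the disjoint cycles of $\sigma_1$ and $\sigma_2$, and the balancing condition holds because a cut or a join preserves the total length of the cycles involved. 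This disposes of clauses (1)--(5) and the weight conditions, so the real content lies in the coloured/counted structure.

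For the colouring clauses I would argue as follows. Clause (7) (no normal in-ends) is immediate from step (7) of \cref{ref:con}, which dashes every in-end edge. Clause (6) reduces to a finite case check: the rules (9a)/(9b) colour, at each of the first $p+q$ inner vertices, exactly one incoming edge bold and exactly one outgoing non-normal edge, and matching the join and cut cases against the orientations yields precisely the six local pictures of \cref{fig:gra}. For clauses (8)--(10) the key input is the monotonicity condition (7c) together with the rigid choice of $\sigma_1$ in Equation (\ref{equ:per}). Writing $\tau_i=(r_i\ s_i)$ with $r_i<s_i$, the colouring rules attach the bold/dashed structure at the vertex over $i$ to the \emph{larger} element $s_i$; consequently a maximal chain of bold edges starting at the in-end labelled $\ell$ consists of exactly those vertices whose larger element lies in the $\ell$-th block $\{\sum_{j<\ell}\mu_j+1,\dots,\sum_{j\le\ell}\mu_j\}$ determined by Equation (\ref{equ:per}). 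Since (7c) forces the sequence $s_1\le\cdots\le s_p<s_{p+1}<\cdots<s_{p+q}$ to be non-decreasing, the blocks containing the $s_i$ are non-decreasing in $i$; hence the indices belonging to a fixed block form a contiguous range, the position intervals $[f_C,l_C]$ of distinct chains are disjoint (clause (9)), and they are ordered by their block, which is the order of the corresponding in-ends (clause (10)).

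For the counter clauses, (11) and (12) are immediate from steps (10)--(11). Clause (13) is the heart of the re-encoding: step (11) assigns to the outgoing non-normal edge at the vertex over $k$ the offset $o_k$ defined by $s_k=\sum_{j<\ell}\mu_j+o_k$, while the incoming counter $i_k$ is the offset of the larger element carried in by the chain, i.e.\ the offset of $s_j$ for the chain-predecessor index $j<k$ lying in the same block $\ell$. In the weak range $k\le p$ monotonicity gives $s_j\le s_k$, hence $i_k\le o_k$; in the strict range $p<k\le p+q$ one has $k-1\ge p$, so the step (7c) at index $k-1$ is strict and yields $s_j\le s_{k-1}<s_k$, hence $i_k<o_k$. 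Thus the counter inequalities reproduce (7c) exactly.

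The step I expect to be the main obstacle is clause (14), together with making the above dictionary precise. For (14) one must show that the cycle labelling a non-normal edge $e$ arising from the chain at in-end $\ell$ contains all elements of block $\ell$ of offset at least $l_e$: because every larger element used up to this point is $\le s_i=\sum_{j<\ell}\mu_j+l_e$ (again by the non-decreasing sequence of larger elements), no element of larger offset in block $\ell$ has yet occurred in any transposition --- neither as the larger element nor as the smaller one, since the latter would require an even larger partner --- so all $\mu_\ell-l_e+1$ of them still lie in the cycle of $e$, giving $\omega(e)\ge\mu_\ell-(l_e-1)$, which is exactly $\mu_\ell-\omega(e)<l_e\le\mu_\ell$. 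The delicate point throughout is the coordination between the global condition (7c) and the local, per-chain bookkeeping: one must verify carefully that the bold edge entering a vertex genuinely carries the previous larger element of the same block and that the cut operations keep the untouched high-offset elements on the edge $e$, so that the counter data faithfully reflect both the monotonicity and the block structure of Equation (\ref{equ:per}); the remaining clauses are routine once this correspondence between larger elements, blocks, chains, and counters is established.
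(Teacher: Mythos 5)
Your proposal is correct and takes essentially the same route as the paper's proof: conditions (1)--(7) and (11)--(12) are read off the construction, the chain conditions (8)--(10) follow from the observation that monotonicity forces the block (the paper's ``type'') of the larger element $s_i$ to be non-decreasing in $i$, and condition (14) rests on the identical key argument that all elements of block $\ell$ with offset exceeding $l_e$ are untouched by the transpositions up to that point, so the cycle labelling $e$ still contains the elements of offset $l_e,\dots,\mu_\ell$, giving $\omega(e)\ge\mu_\ell-l_e+1$. Your explicit verification of the counter condition (13) in the weak range $k\le p$ and strict range $p<k\le p+q$ merely spells out what the paper subsumes under ``follow by Construction''.
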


\begin{proof}
Let $(\sigma_1,\tau_1,\dots,\tau_b,\sigma_2)$ be a triply interpolated factorisation of type $(g,\mu,\nu,p,q,r)$ with $\sigma_1$ as in \cref{equ:per}. The conditions (1)-(7) in \cref{def:mono} follow immediately by Construction.\\
The chains of bold edges correspond to the following situation in the symmetric group setting: Suppose $\tau_i=(r_i\ s_i)$ for $r_i<s_i$. Since we chose $\sigma_1$ in \cref{equ:per}, we can group the transpositions $\tau_i$ for $i\le k$. We say $\tau_i$ is of type $t$, if $s_i$ is contained in the cycle of $\sigma_1$ labeled $t$. Now, for $i<j$, let $t_i$ (resp. $t_j$) be the type of $\tau_i$ (resp. $\tau_j$), then $t_i\le t_j$.\\
A chain of bold edges starting at the in-end $i$ corresponds to the transpositions of type $i$. Thus conditions (8)-(10) follow.\\
The counter conditions (11)-(13) follow by Construction. For condition (14) we observe the following: Let $e$ be a non-normal edge which arises from the chain of bold edges $C$ and whose source vertex has position $p$. Let $C$ start at $i$ and let $l_e$ be the counter of $e$. Moreover, let the $i-$th cycle of $\sigma_1$ be of the form \begin{equation}\left(\sum_{a=1}^{i-1}\mu_a+1\cdots\sum_{a=1}^{i}\mu_a\right).\end{equation}
Then for $\tau_p=(r_p\ s_p)$ we have $s_p=\sum_{a=1}^{i-1}\mu_a+l_e$. By monotonicity multiplying $\sigma_1$ by $\tau_1\cdots\tau_p$ does not change the images of $\sum_{a=1}^{i-1}\mu_a+l_e,\dots,\sum_{a=1}^{i}\mu_a-1$. Thus the cycle of $\tau_p\cdots\tau_1\sigma_1$ containing $\sum_{a=1}^{i-1}\mu_a+l_e$ has the following structure \begin{equation}\left(\cdots\sum_{a=1}^{i-1}\mu_a+l_e\cdots\sum_{a=1}^{i}\mu_a\cdots\right),\end{equation} where the dots left and right indicate other elements.
Thus the weight $\omega(e)$ of the weight $e$ fulfils the following inequality\begin{equation}\omega(e)\ge\mu_i-l_e+1\end{equation} or equivalently \begin{equation}l_e>\mu_i-\omega(e).\end{equation} Thus condition (14) is fulfilled as well.
\end{proof}

\begin{Definition}
An automorphism of a triply interpolated monodromy graph $\Gamma$ is a graph automorphism $f:\Gamma\to\Gamma$, such that:
\begin{enumerate}
\item The function $f$ respects labels, weights, colours and counters.
\item The following diagram commutes:
\begin{center}
$\begin{CD}
\Gamma     @>{f}>>  \Gamma\\
@VVV        @VVV\\
[0,m+1]    @>{\mathrm{id}}>>  [0,m+1].
\end{CD}$
\end{center}
\end{enumerate}
\vspace{\baselineskip}
We denote the automorphism group of $\Gamma$ by $\mathrm{Aut}(\Gamma)$.\end{Definition}

We are now ready to give a weighted bijection between triply interpolated factorisations and triply interpolated monodromy graphs of type $(g,\mu,\nu,p,q,r)$.
\begin{Lemma}
\label{lem:mult}
 Let $\Gamma$ be a triply interpolated monodromy graph of type $(g,\mu,\nu,p,q,r)$. The number $m(\Gamma)$ of triply interpolated factorizations of type $(g,\mu,\nu,p,q,r)$ with $\sigma_1$ as in \cref{equ:per} for which \cref{ref:con} produces $\Gamma$ is
 \begin{equation}m(\Gamma)=\frac{1}{|\mathrm{Aut}(\Gamma)|}\prod \omega(e),\end{equation}
  where we take the product over all dashed and normal edges $e$, which are not adjacent to out-ends.\\
 We call $m(\Gamma)$ the \textit{multiplicity} of $\Gamma$.
 \end{Lemma}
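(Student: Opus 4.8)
The plan is to count, for a fixed triply interpolated monodromy graph $\Gamma$, the sequences of transpositions $(\tau_1,\dots,\tau_b)$ which produce $\Gamma$ under Construction \ref{ref:con}. Since $\sigma_1$ is fixed as in Equation (\ref{equ:per}) and $\sigma_2=(\tau_b\cdots\tau_1\sigma_1)^{-1}$ is then forced, such a sequence is the same datum as a factorisation producing $\Gamma$. First I would reconstruct these sequences step by step, processing the inner vertices in increasing order of their position $k=1,\dots,b$ while maintaining the permutation $\Sigma_k=\tau_k\cdots\tau_1\sigma_1$ together with the bijection between the cycles of $\Sigma_k$ and the edges of $\Gamma$ lying over $(k,k+1)$. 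At the vertex over $k$ the graph prescribes whether $\tau_k$ is a cut or a join, which edges (hence which cycles, as sets) are involved, and their weights, colours and counters; the number of transpositions $\tau_k$ compatible with these data depends only on the local picture, so the total count factorises as a product of local contributions over the vertices, up to the correction by $\mathrm{Aut}(\Gamma)$.

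The local analysis splits into two regimes. Over the last $r$ vertices no colouring is imposed and every incident edge is normal, so I would invoke verbatim the cut-and-join count of \cite{CJM}: a join of cycles contributes the product of the weights of its incoming edges and a cut contributes through the subsequent distribution of the severed cycles, the standard telescoping assigning in total one factor $\omega(e)$ to each relevant normal edge. Over the first $p+q$ vertices the new phenomenon is that the larger element $s_k$ of $\tau_k=(r_k\;s_k)$ is completely determined: by the chain structure (conditions (8)--(10)) it lies in the block indexed by the chain through the incoming bold edge, and by condition (13) its position inside that block is read off from the counter $o_k$ of the outgoing non-normal edge. Hence the bold edge carries no free choice and contributes the factor $1$, while the factor of a join drops from the product of both incoming weights to the weight of the single non-bold incoming edge. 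I would make this precise case by case over the six vertex types of Figure \ref{fig:gra}, reusing the element-bookkeeping of Lemma \ref{le:fir}; the counter inequality of condition (14), $l_e>\mu_i-\omega(e)$, is exactly what guarantees $r_k<s_k$, so that the monotone candidates are all admissible.

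Assembling the local contributions over all vertices, and taking into account the labelling of the cycles of $\sigma_2$, each in-end and each bounded dashed or normal edge acquires its weight exactly once, the bold edges contribute the factor $1$, and the edges at the out-ends contribute nothing; this yields the product $\prod\omega(e)$ of the statement, taken over the dashed and normal edges not adjacent to out-ends. Finally I would account for the factor $\frac{1}{|\mathrm{Aut}(\Gamma)|}$ as in \cite{CJM} and \cite{do2015monotone}: since $\Gamma$ has a unique inner vertex over each integer, any automorphism fixes every vertex and can only permute parallel edges carrying identical weights, colours and counters; these permutations are precisely the redundancies produced when the reconstruction distinguishes such edges, so dividing the edge-labelled count by $|\mathrm{Aut}(\Gamma)|$ gives $m(\Gamma)$.

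The main obstacle is the local count at the monotone vertices. In contrast with the classical situation, where both endpoints of each transposition are free, here one endpoint is pinned by the counter, and one must confirm two things at once: that the surviving freedom is measured exactly by the weight of the non-bold incoming edge and is not cut down further by the requirement $r_k<s_k$, and that every locally admissible choice extends to a globally (strictly) monotone factorisation realising all the later counters. Both points rest on the block-and-chain bookkeeping of Lemma \ref{le:fir}: the ordering conditions (8)--(10) force the larger elements to increase through each block in the prescribed weak or strict fashion, so that the local choices at distinct vertices are genuinely independent, while condition (14) calibrates the counters so that the admissible smaller elements are exactly those lying below $s_k$ in the relevant cycle. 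Making this independence and compatibility precise across all six vertex types, and across the weak-to-strict transition at position $p$, is the technical heart of the argument.
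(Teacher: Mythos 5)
Your proposal takes essentially the same route as the paper's proof: a vertex-by-vertex count in which, at the first $p+q$ inner vertices, the larger entry of $\tau_k$ is pinned by the chain structure and the counters (so bold edges contribute a factor $1$, a monotone join contributes only the weight of the non-bold incoming edge, and a monotone cut determines its transposition uniquely), the remaining vertices are counted exactly as in \cite{CJM} with the even-weight cut handled by the factor-$2$ versus $\mathrm{Aut}(\Gamma)$ dichotomy, and the automorphisms divided out at the end. If anything, you are more explicit than the paper, which states the local counts without spelling out that condition (14) is what guarantees $r_k<s_k$ for the pinned choices or that the local choices patch together globally.
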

\begin{Remark}
An immediate consequence of this Lemma is that the number $m(\Gamma)$ does not depend on the counters of $\Gamma$. We will use this in \cref{sec:pie}.
\end{Remark}
 \begin{proof}
 Let $v$ be one of the first $p+q$ inner vertices. If $v$ is a cut, the corresponding transposition is uniquely defined by the weights of the outgoing edges and the counter of the outgoing dashed or bold edge. If two edges are joined at $v$, the larger entry of the corresponding transposition is uniquely defined by the counter of the outgoing non-normal edge and the source chain of the in-going bold edge. However, we have a number of possibilites for the first element of the transposition, which is exactly the weight of the non-bold ingoing edge.\\
  Now let $v$ be an inner vertex whose position is greater than $p+q$. If $v$ is a cut with ingoing edge $e$, there are $\omega(e)$ possibilities for $\tau_v$, except when $\omega(e)=2n$ and both outgoing edges have weight $n$. Then, there are only $n$ possibilities for $\tau_v$. If both cycles have distinguishable evolution, it matters which cycle has which evolution and obtain a factor of $2$. If the cycles have undistinguishable evolution, this corresponds to a contribution of $\mathrm{Aut}(\Gamma)$.\\
  If $v$ is a join with ingoing edges $e$ and $e'$, the number of possibilites for $\tau_v$ is $\omega(e)\cdot\omega(e')$. Thus the Lemma is proved.
 \end{proof}

\begin{Example}
The multiplicity of the graph in the right of \cref{fig:tu} is $2$.
\end{Example}

By our previous discussion we can compute triply interpolated Hurwitz numbers in terms of triply interpolated monodromy graphs.

\begin{Proposition}
Let $p,q,r$ be positive integers and $\mu,\nu$ ordered partitions of the same number $d$. Then:
\begin{equation}
H_{p,q,r;\mu,\nu}^{\le,<,(2)}=\frac{1}{\mu_1\cdots\mu_{\ell(\mu)}}\sum_{\Gamma}m(\Gamma)\textrm{ for }p=0\textrm{ or }q=0,
\end{equation}
where we sum over all triply interpolated monodromy graphs of type $(g,\mu,\nu,p,q,r)$.
\end{Proposition}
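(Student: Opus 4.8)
The plan is to combine the three preceding results into a single fiber-counting argument. First I would invoke Equation (\ref{equ:handm}): since we assume $p=0$ or $q=0$, Lemma \ref{lem:indep} guarantees that the number of triply interpolated factorisations with a fixed $\sigma_1$ of cycle type $\mu$ is independent of the chosen $\sigma_1$, and hence
\[
H_{p,q,r;\mu,\nu}^{\le,<,(2)}=\frac{1}{\mu_1\cdots\mu_{\ell(\mu)}}M_{p,q,r;\mu,\nu}^{\le,<,(2)}.
\]
This reduces the claim to showing that $M_{p,q,r;\mu,\nu}^{\le,<,(2)}$, the number of triply interpolated factorisations of type $(g,\mu,\nu,p,q,r)$ with $\sigma_1$ fixed as in Equation (\ref{equ:per}), equals $\sum_{\Gamma}m(\Gamma)$.

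Next I would set up the fibration induced by Construction \ref{ref:con}. By Lemma \ref{le:fir}, this construction assigns to every such factorisation a triply interpolated monodromy graph of type $(g,\mu,\nu,p,q,r)$ in the sense of Definition \ref{def:mono}. Thus Construction \ref{ref:con} defines a map from the (finite) set of triply interpolated factorisations with fixed $\sigma_1$ to the (finite) set of triply interpolated monodromy graphs, and the fibers of this map partition the domain. Counting the domain fiberwise then gives
\[
M_{p,q,r;\mu,\nu}^{\le,<,(2)}=\sum_{\Gamma}\#\{\text{factorisations producing }\Gamma\}.
\]

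The key input is Lemma \ref{lem:mult}, which evaluates each fiber cardinality: the number of factorisations producing a fixed $\Gamma$ is exactly $m(\Gamma)=\frac{1}{|\mathrm{Aut}(\Gamma)|}\prod\omega(e)$. Substituting this into the fiberwise count yields $M_{p,q,r;\mu,\nu}^{\le,<,(2)}=\sum_{\Gamma}m(\Gamma)$, and combining with the displayed reduction completes the argument.

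The main point to watch is that the sum on the right ranges over all graphs satisfying Definition \ref{def:mono}, so I must be sure that every such abstract graph is genuinely realised by some factorisation — otherwise the sum would overcount by including graphs with empty fiber. This is not a gap, however, because Lemma \ref{lem:mult} asserts the positive formula $m(\Gamma)=\frac{1}{|\mathrm{Aut}(\Gamma)|}\prod\omega(e)$ for every $\Gamma$ of the given type, and a positive fiber cardinality in particular means the fiber is non-empty; surjectivity of Construction \ref{ref:con} onto the graphs of Definition \ref{def:mono} is therefore built into Lemma \ref{lem:mult}. The only genuine restriction is the hypothesis $p=0$ or $q=0$, which is needed solely to pass from the full factorisation count to the fixed-$\sigma_1$ count $M_{p,q,r;\mu,\nu}^{\le,<,(2)}$ via Lemma \ref{lem:indep} and Equation (\ref{equ:handm}); the graph-theoretic steps themselves are insensitive to this condition.
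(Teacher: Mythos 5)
Your proposal is correct and follows essentially the same route as the paper, whose proof is exactly the combination of Lemma \ref{le:fir}, Lemma \ref{lem:mult} and Equation (\ref{equ:handm}) that you spell out; your fiberwise counting via Construction \ref{ref:con} is just the explicit version of that one-line argument. Your added remark on surjectivity is a fair reading of Lemma \ref{lem:mult}, since the stated fiber cardinality $m(\Gamma)=\frac{1}{|\mathrm{Aut}(\Gamma)|}\prod\omega(e)$ is positive for every graph of Definition \ref{def:mono}, so no empty fibers can spoil the sum.
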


\begin{proof}
This is an immediate consequence of \cref{le:fir}, \cref{lem:mult} and c\ref{equ:handm}.
\end{proof}

\subsection{Piecewise polynomiality}
\label{sec:pie}
We want to use \cref{lem:mult} to study the piecewise polynomiality of triply interpolated Hurwitz numbers in the flavour of the discussion of section 6 of \cite{CJM}. We begin by studying the genus $0$ case and we will use Erhart theory to generalise these results to higher genera.

\subsubsection{The genus 0 case}
\label{sec:pi0}
For the rest of this subsection, we assume $p+q+r=-2+\ell(\mu)+\ell(\nu)$. It is our aim to show that $M_{p,q,r;\mu,\nu}^{\le,<,(2)}$ is piecewise polynomial and to provide a constructive method to compute the polynomials in each chamber. By \cref{equ:handm} this also produces a method to compute the polynomials for $H_{p,q,r;\mu,\nu}^{\le,<,(2)}$ in each chamber when $p=0$ or $q=0$.

\begin{Proposition}
\label{prop:piece}
The function 
\begin{equation}
\begin{aligned}
M_{p,q,r;\ell(\mu),\ell(\nu)}^{\le,<,(2)}:\mathbb{N}^{\ell(\mu)}\times\mathbb{N}^{\ell(\nu)} & \to\mathbb{Q}\\
(\mu,\nu) & \mapsto M_{p,q,r;\mu,\nu}^{\le,<,(2)}
\end{aligned}
\end{equation}
is piecewise polynomial, i.e. for every chamber $C$ induced by the hyperplane arrangement $\mathcal{W}$, there exists a polynomial $m_{p,q,r;\ell(\mu),\ell(\nu)}^{\le,<,(2)}(C)\in\mathbb{Q}[\underline{M},\underline{N}]$, such that $M_{p,q,r;\mu,\nu}^{\le,<,(2)}=m_{p,q,r;\mu,\nu}^{\le,<,(2)}(C)(\mu,\nu)$ for all $(\mu,\nu)\in C$.
\end{Proposition}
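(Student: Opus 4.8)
The plan is to reduce the statement to two facts: that the set of contributing reduced monodromy graphs is constant on each chamber, and that the contribution of each such graph is a polynomial there. First I would invoke the graph description. By Lemma~\ref{le:fir} every triply interpolated factorisation with $\sigma_1$ as in Equation~(\ref{equ:per}) produces a triply interpolated monodromy graph, and by the weighted bijection of Lemma~\ref{lem:mult} the number of factorisations producing a fixed graph $\Gamma$ equals its multiplicity $m(\Gamma)$; summing over all graphs yields $M_{p,q,r;\mu,\nu}^{\le,<,(2)}=\sum_\Gamma m(\Gamma)$, valid for all $p,q,r$. Since by the Remark following Lemma~\ref{lem:mult} the value $m(\Gamma)$ does not depend on the counters, I would collect the terms according to their underlying reduced monodromy graph $\bar\Gamma$ and write
\begin{equation}
M_{p,q,r;\mu,\nu}^{\le,<,(2)}=\sum_{\bar\Gamma} N(\bar\Gamma)\, m(\bar\Gamma),
\end{equation}
where $m(\bar\Gamma)=\frac{1}{|\mathrm{Aut}(\bar\Gamma)|}\prod\omega(e)$ is the common multiplicity attached to $\bar\Gamma$ and $N(\bar\Gamma)$ is the number of counter assignments on $\bar\Gamma$ satisfying conditions (11)--(14) of Definition~\ref{def:mono}.

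The second step fixes a chamber $C$ of $\mathcal{W}$ and treats $m(\bar\Gamma)$. In genus $0$ each reduced monodromy graph is a tree, so cutting any inner edge $e$ separates the in- and out-ends into sets $I_e,J_e$, and the balancing condition for monodromy graphs forces $\omega(e)=\bigl|\sum_{i\in I_e}\mu_i-\sum_{j\in J_e}\nu_j\bigr|$. The sign of each such form, and hence whether a given combinatorial tree with a given colouring occurs with strictly positive weights, is constant on $C$ precisely because the walls of $\mathcal{W}$ are cut out by the vanishing of the forms $\sum_{i\in I}\mu_i-\sum_{j\in J}\nu_j$. Thus the set of reduced monodromy graphs entering the sum is the same for all $(\mu,\nu)\in C$, and on $C$ each $\omega(e)$ equals a fixed linear form in $\underline M,\underline N$. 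Consequently $m(\bar\Gamma)$, a product of such forms divided by the constant $|\mathrm{Aut}(\bar\Gamma)|$, is a polynomial in $\underline M,\underline N$ on $C$.

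The third and most delicate step is to show that $N(\bar\Gamma)$ is polynomial on $C$ as well. Counters live only on the non-normal edges, which by the colouring conventions all sit among the first $p+q$ inner vertices, and by the chain conditions (8)--(10) the bold edges organise into chains with pairwise disjoint supports. Since the inequalities (13) only relate counters meeting at a common inner vertex, and the bound (14) on an edge $e$ involves only the endpoint $\mu_{i(e)}$ of its source chain, the admissible assignments decouple into an independent choice for each chain, so $N(\bar\Gamma)$ factors as a product over chains. Along a single chain $\gamma$ the counters form a sequence that is weakly increasing at vertices of position $\le p$ and strictly increasing at positions between $p+1$ and $p+q$, with each counter $l_e$ confined to the interval $\mu_i-\omega(e)<l_e\le\mu_i$ of length $\omega(e)$. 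Counting such integer sequences is an iterated summation over these intervals, whose value I would evaluate by repeated use of the hockey-stick identity, giving a finite $\mathbb{Q}$-linear combination of products of the linear forms $\omega(e)$ and hence a polynomial in $\underline M,\underline N$.

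I expect this last step to be the crux. One must verify that no subdivision of $C$ finer than the walls of $\mathcal{W}$ is required to keep the counter-counting system of constant combinatorial type; that is, that the interval endpoints $\mu_i-\omega(e)+1$ and $\mu_i$ never reorder, and that the binding inequalities stay binding, as $(\mu,\nu)$ ranges over $C$. This follows because every such reordering would force a vanishing of one of the forms defining $\mathcal{W}$, hence cannot occur in the interior of a chamber. Once established, combining the three steps shows that on $C$ the quantity $M_{p,q,r;\mu,\nu}^{\le,<,(2)}$ is a finite $\mathbb{Q}$-linear combination of products of polynomials, hence a single polynomial $m_{p,q,r;\ell(\mu),\ell(\nu)}^{\le,<,(2)}(C)$, as claimed. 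For higher genus the same outline persists, but the trees are replaced by graphs carrying loops and the counter count becomes a genuine lattice-point count in a polytope, which is exactly where Ehrhart theory will enter in Section~\ref{sec:pi1}.
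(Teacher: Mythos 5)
Your proposal is correct and follows essentially the same route as the paper's proof: chamber-independence of the set of reduced monodromy graphs (Lemma \ref{lem:ch}), linearity of the edge weights (Corollary \ref{lem:weight}), and a chain-path-by-chain-path evaluation of the counter count as an iterated sum whose combinatorial type --- the ordering of the bounds $\mu_i-\omega(e_k)$ and the value of $\max\{1,\mu_i-\omega(e_1)\}$, which is constant on each chamber precisely because the nesting $I_j\subset I_k$, $J_j\subset J_k$ along a chain-path makes weight differences into walls of $\mathcal{W}$ --- matches Lemma \ref{lem:cou}. The only cosmetic difference is that you evaluate the iterated sums via the hockey-stick identity where the paper invokes Faulhaber's formula; both yield polynomiality for the same reason.
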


\begin{proof}
The proof follows from \cref{lem:ch}, \cref{lem:weight} and \cref{lem:cou}.
\end{proof}

\begin{Remark}
Note, that this Proposition does not prove that the functions for triply interpolated double Hurwitz numbers, which we denote by $H_{p,q,r;\ell(\mu),\ell(\nu)}^{\le,<,(2)}$, are polynomials for $p=0$ or $q=0$, as these function differ from the functions $M_{p,q,r;\ell(\mu),\ell(\nu)}^{\le,<,(2)}$ by a factor of $\frac{1}{\prod\mu_i}$. It follows from \cref{thm:gjv}, that the polynomials $m_{p,0,r;\ell(\mu),\ell(\nu)}^{\le,<,(2)}(C)$ contain a factor of $\prod_{i=1}^{\ell(\mu)}\mu_i$ --- however this is not true for the contribution of each graph to $m_{p,0,r;\ell(\mu),\ell(\nu)}^{\le,<,(2)}(C)$ as can be seen in \cref{ex:c}. By \cite{hahn2018wall}, the same is true for the interpolation between simple and strictly monotone Hurwitz numbers, i.e. $p=0$. It would be interesting to see how the contributions for each graph add up to a polynomial which is divisible by $\prod \mu_i$. 
\end{Remark}

We start by examining the edge weights in the equation
\begin{equation}\label{equ:pol1}
M_{p,q,r;\mu,\nu}^{\le,<,(2)}=\sum_{\Gamma}m(\Gamma)=\sum_{\Gamma}\prod\omega(e),
\end{equation}
where we sum over all triply interpolated monodromy graphs of type $(0,\mu,\nu,p,q,r)$. We want to group some of the graphs appearing in the sum.

\begin{Definition}
Let $\Gamma$ be a reduced monodromy graph of type $(g,\mu,\nu,p,q,r)$. Then we define the function 
\begin{align}
F(\Gamma,\mu,\nu,p,q,r)=\left|\{\textrm{triply interpolated monodromy graphs }\right.\\
\left.\tilde{\Gamma} \textrm{ of type }(g,\mu,\nu,p,q,r)\textrm{ with reduced monodromy graph }\Gamma\}\right|.
\end{align}
\end{Definition}

Then we can rewrite \cref{equ:pol1} as follows
\begin{equation}\label{equ:pol}
M_{p,q,r;\mu,\nu}^{\le,<,(2)}=\sum_{\Gamma}\prod\omega(e)F(\Gamma,\mu,\nu,p,q,r),
\end{equation}
where we sum over all reduced monodromy graphs of type $(0,\mu,\nu,p,q,r)$.\\
For the remainder of this subsection, we prove the following three claims constructively:
\begin{enumerate}
\item The set of reduced monodromy graphs of type $(0,\mu,\nu,p,q,r)$ only depends on the chamber $C$ (induced by the hyperplane arrangement $\mathcal{W}$) in which $\mu$ and $\nu$ are contained and not on the specific entries $\mu_i,\nu_j$. (\cref{lem:ch})
\item The product $\prod\omega(e)$ appearing in \cref{equ:pol} is a polynomial. (\cref{lem:weight})
\item The function $F(\Gamma,\nu,\nu)$ is a polynomial in each chamber. (\cref{lem:cou})
\end{enumerate}

%These steps will yield a constructive method of computing the polynomials $m_{p,q,r;\mu,\nu}^{\le,<,(2)}(C)$. Lemma \ref{lem:cou} represents an ingredient that was not used in the literature in this context before.\\
Claim (1) was actually observed in \cite{CJM} for the case of (non-triply interpolated) monodromy graphs. For the convenience of the reader, we repeat the argument. We begin by introducing some Notation.

\begin{Notation}
Let $\mu$ be a partition and let $I\subset\{1,\dots,\ell(\mu)\}$. Then $\mu_I$ is the subpartition of $\mu$ given by $\mu_I=(\mu_{i_1},\dots,\mu_{i_{|I|}})$, where $i_j<i_{j+1}$.
\end{Notation}

\begin{Lemma}
\label{lem:ch}
The set of reduced monodromy graphs of type $(0,\mu,\nu,p,q,r)$ only depends on the chamber $C$.
\end{Lemma}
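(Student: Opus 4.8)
The plan is to split the data of a reduced monodromy graph into a purely combinatorial part — the underlying tree (recall $g=0$), its map to $[0,b+1]$, the labels of the ends, and the colouring together with the chain ordering — and a numerical part, namely the edge weights, which are forced once $(\mu,\nu)$ is fixed. All the graph/map and colouring conditions of \cref{def:mono} (items (1)--(10)) refer only to incidences, vertex positions, the labels $1,\dots,\ell(\mu)$ and $1,\dots,\ell(\nu)$ of the ends, and the partition of edges into the three colours. None of these refers to the actual values $\mu_i,\nu_j$, only to their labels. Consequently there are only finitely many combinatorial types satisfying (1)--(10), and this finite list is literally the same for every $(\mu,\nu)$. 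The whole content of the lemma is therefore to show that selecting, among these types, those that are \emph{realisable} is a chamber-invariant operation.

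First I would pin down the weights. Since the underlying graph is a tree, deleting an inner edge $e$ splits it into two components, and the balancing condition (weight condition (5) of \cref{def:mono}) together with the boundary values $\omega(\text{in-end }i)=\mu_i$ and $\omega(\text{out-end }j)=\nu_j$ forces
\[
\omega(e)=\sum_{i\in I_e}\mu_i-\sum_{j\in J_e}\nu_j,
\]
where $I_e\subset\{1,\dots,\ell(\mu)\}$ and $J_e\subset\{1,\dots,\ell(\nu)\}$ index the ends lying in the component of $e$ not containing $0$. These are exactly the linear forms cutting out the hyperplane arrangement $\mathcal{W}$. Hence integrality of the weights is automatic, and the only genuine requirement from weight condition (4) is positivity $\omega(e)>0$; the sign of each form $\sum_{i\in I_e}\mu_i-\sum_{j\in J_e}\nu_j$ is constant on each chamber $C$ by the very definition of $\mathcal{W}$. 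So whether a fixed combinatorial type admits a valid positive weighting is a property of $C$ alone.

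The remaining, and I expect the most delicate, step is to verify that once the weights are positive a given type genuinely arises as a reduced monodromy graph — that is, that some assignment of counters obeying counter conditions (11)--(14) exists, since by definition the reduced graph is the counter-forgetting image of a full triply interpolated monodromy graph. I would construct such an assignment chain by chain: set every in-end counter to $1$, then, proceeding along each chain in order of vertex position, choose the outgoing counter $o_k$ inside its admissible window $(\mu_i-\omega(e),\mu_i]$ of condition (14) so as to respect $i_k\le o_k$ for $k\le p$ and $i_k<o_k$ for $p<k\le p+q$. The point to check is that these windows are nonempty and that the (strict or weak) monotone choice remains feasible all along a chain; this is exactly where the counter conditions must be reconciled with the chamber geometry. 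The key observation I would use is that the bold weight can only decrease as dashed edges branch off, so the running counter never exhausts its window, and every inequality that could obstruct the assignment is again a sign condition on a form $\sum_{i\in I}\mu_i-\sum_{j\in J}\nu_j$, hence constant on $C$. Combining the three observations, the set of reduced monodromy graphs of type $(0,\mu,\nu,p,q,r)$ equals the set of combinatorial types whose associated weight forms are all positive on $C$, which depends only on $C$.
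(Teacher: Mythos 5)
Your first two steps reproduce the paper's proof almost verbatim: since $g=0$ the underlying graph is a tree, cutting along an inner edge $e$ splits the ends into two groups, and balancing forces $\omega(e)=\sum_{i\in I_e}\mu_i-\sum_{j\in J_e}\nu_j$, so the only numerical constraint, positivity of all edge weights, is a sign condition on exactly the forms cutting out $\mathcal{W}$ and is therefore constant on each chamber. The paper stops precisely there, asserting that the only requirement for a reduced monodromy graph to contribute to the sum in Equation (\ref{equ:pol}) is the positivity of all edge weights.

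Your third step, however, overclaims. You assert that whenever the weights are positive, a counter assignment obeying conditions (11)--(14) of \cref{def:mono} exists, because ``every inequality that could obstruct the assignment is again a sign condition on a form $\sum_{i\in I}\mu_i-\sum_{j\in J}\nu_j$, hence constant on $C$.'' This is false once $q>0$. Your greedy construction does work in the weak zone: bold weights strictly decrease along a chain, so $l_k=\max\{1,\mu_i-\omega(e_k)+1\}$ is an admissible weakly increasing choice. But a chain from the in-end $i$ passing through $t$ vertices in positions $p+1,\dots,p+q$ needs a strictly increasing run of counters bounded above by $\mu_i$, hence needs $\mu_i\geq t+1$, and more generally $\omega(e_k)$ must exceed the number of strict steps occurring after $e_k$. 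These are affine conditions ($\mu_i>t$, $\omega(e_k)>c$ with positive integer constants $t,c$), not homogeneous sign conditions, and the chambers of $\mathcal{W}$ are cones: $(\mu,\nu)$ and $(2\mu,2\nu)$ lie in the same chamber, yet if $\mu_i=1$ condition (14) forces every counter on the chain from $i$ to equal $1$, making any strict increase impossible, while for the doubled data valid counters exist. So realisability of counters genuinely varies within a chamber, and the step cannot be completed as stated; indeed, under your strict reading of ``reduced monodromy graph'' (a graph that actually arises by forgetting counters) the lemma itself would fail for $q>0$. The paper sidesteps this by taking the weaker reading implicit in Equation (\ref{equ:pol}): the indexing set consists of the combinatorial types with positive weights, the existence or non-existence of counters being absorbed into the factor $F(\Gamma,\mu,\nu,p,q,r)$, which is permitted to vanish, while $m(\Gamma)$ is independent of the counters. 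With the realisability step deleted and that reading adopted, your argument is correct and coincides with the paper's.
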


\begin{proof}
Let $\Gamma$ be a triply interpolated monodromy graph of type $(0,\mu,\nu,p,q,r)$, then we cut $\Gamma$ along $e$ and obtain two triply interpolated monodromy graphs $\Gamma_1$ and $\Gamma_2$. Let $e$ point away from $\Gamma_1$, then $\Gamma_1$ and $\Gamma_2$ are of respective type $(0,\mu_{I_1},\nu_{J_1}\cup\{\omega(e)\},k_1)$ and $(0,\mu_{I_2}\cup\{\omega(e)\},\nu_{J_2},k_2)$ for subsets $I_1,I_2\subset\{1,\dots,\ell(\mu)\}$ and $J_1,J_2\subset\{1,\dots,\ell(\nu)\}$.  and $k_1+k_2=k$. Moreover, we have $|\mu_{I_1}|=|\nu_{J_1}\cup\{\omega(e)\}|$ and we obtain 
\begin{equation}
\omega(e)=\sum_{i\in I_1}\mu_i-\sum_{j\in J_1}\nu_j.
\end{equation}
The only requirement for a reduced monodromy graph to contribute to the sum in \cref{equ:pol} is the positivity of all edge weights. As we saw above, this only depends on the chamber $C$ we pick.
\end{proof}

Claim (2) immediately follows:

\begin{Corollary}
\label{lem:weight}
Every edge weight $\omega(e)$ is a linear polynomial in the entries of $\mu$ and $\nu$. Thus $\prod\omega(e)$ is a polynomial in the entries of $\mu$ and $\nu$ as well.\qed
\end{Corollary}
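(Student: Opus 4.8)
The plan is to read off both assertions directly from the cut construction already used in the proof of Lemma~\ref{lem:ch}. First I would recall that in the genus~$0$ setting the underlying graph of a reduced monodromy graph $\Gamma$ is a tree; hence removing any single inner edge $e$ disconnects $\Gamma$ into two subgraphs $\Gamma_1$ and $\Gamma_2$. Orienting $e$ so that it points away from $\Gamma_1$, the subgraph $\Gamma_1$ is itself a monodromy graph whose in-ends are indexed by some $I_1\subset\{1,\dots,\ell(\mu)\}$ and whose out-ends are indexed by $J_1\subset\{1,\dots,\ell(\nu)\}$, together with the single new end coming from $e$.

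Next I would invoke the \emph{balancing condition for monodromy graphs}: summing the balancing equalities over all inner vertices of $\Gamma_1$ makes every internal edge weight of $\Gamma_1$ cancel, leaving only the net flow across the cut. This reproduces the identity
\begin{equation}
\omega(e)=\sum_{i\in I_1}\mu_i-\sum_{j\in J_1}\nu_j
\end{equation}
already obtained in the proof of Lemma~\ref{lem:ch}. The right-hand side is a homogeneous linear expression in the coordinates $\mu_i$ and $\nu_j$, so every inner edge weight is a linear polynomial in the entries of $\mu$ and $\nu$; the end weights $\mu_i$ and $\nu_j$ are linear trivially.

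Finally, since $\Gamma$ has only finitely many inner edges, the product $\prod\omega(e)$ appearing in Equation~(\ref{equ:pol}) is a finite product of linear polynomials and is therefore a polynomial in the entries of $\mu$ and $\nu$, of degree equal to the number of inner edges of $\Gamma$. This gives both claims at once.

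I do not expect any serious obstacle here: the only point requiring care is the cut argument itself, namely checking that in genus~$0$ the removal of an inner edge always yields two connected pieces each carrying a well-defined flow, so that the balancing condition collapses to the stated difference. This is automatic because a genus~$0$ monodromy graph is a tree. In higher genus a single cut need not disconnect the graph, and the weight of an edge lying on a cycle is no longer pinned down by such a linear relation; but that case is outside the scope of the present subsection, where the constraint $p+q+r=-2+\ell(\mu)+\ell(\nu)$ forces genus~$0$.
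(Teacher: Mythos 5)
Your proposal is correct and follows the paper's route exactly: the paper gives no separate argument for this corollary, declaring it immediate from the identity $\omega(e)=\sum_{i\in I_1}\mu_i-\sum_{j\in J_1}\nu_j$ established in the proof of Lemma~\ref{lem:ch} by cutting the genus-$0$ graph along $e$, which is precisely the cut-and-balance computation you carry out. Your additional remarks (the tree structure forcing a single cut to disconnect, the cancellation of internal weights when summing the balancing conditions, and the failure of this pinning-down in higher genus) merely make explicit what the paper leaves implicit.
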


Before we can prove claim (3), we need the following Definition.

\begin{Definition}
\label{def:chainpath}
Let $B$ a path in $\Gamma$ starting at an in-end, such that \begin{enumerate}
\item There are $s$ edges in $B$.
\item The first $s-1$ edges form a chain of bold edges. (see (8) of \cref{def:mono}
\item The last edge is dashed.
\end{enumerate}
We call $B$ a \textit{chain-path of length }$s$.
\end{Definition}

The following Lemma is our key step towards \cref{prop:piece}.

\begin{Lemma}
\label{lem:cou}
The function $F(\Gamma,\mu,\nu,p,q,r)$ can be expressed as a polynomial in each chamber $C$.
\end{Lemma}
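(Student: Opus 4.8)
The plan is to show that $F(\Gamma,\mu,\nu,p,q,r)$, which counts the number of ways to distribute valid counters on a fixed reduced monodromy graph $\Gamma$, is governed by a finite set of linear inequalities and equalities in the counter variables, so that $F$ becomes a count of lattice points in a polytope whose defining data depend only on the chamber $C$. First I would set up variables: assign to each non-normal edge $e$ its counter $l_e$, and recall that by condition (12) of Definition \ref{def:mono} the counters at in-ends are fixed to $1$, while conditions (13) and (14) impose constraints at each inner vertex. The key observation is that the remaining claims (1) and (2), namely Lemma \ref{lem:ch} and Corollary \ref{lem:weight}, already guarantee that the set of reduced monodromy graphs and the edge weights $\omega(e)$ depend only on $C$; what is left is to count counter-assignments.

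Next I would translate each counter condition into an affine (in)equality. Condition (13) gives, at the $k$-th inner vertex, either $i_k\le o_k$ (for $k=1,\dots,p$) or $i_k<o_k$ (for $k=p+1,\dots,p+q$), where $i_k,o_k$ are the counters of the unique incoming bold and outgoing non-normal edges. Condition (14) states that for a non-normal edge $e$ arising from the chain starting at in-end $i$ one has $\mu_i-\omega(e)<l_e\le\mu_i$. Since each $\omega(e)$ is, by Corollary \ref{lem:weight}, a fixed linear polynomial in $\mu,\nu$ once the chamber is fixed, these become honest linear inequalities in the counter variables $l_e$ with coefficients depending polynomially on $\mu,\nu$. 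Thus the valid counter assignments are exactly the integer points of a polytope $P_\Gamma(\mu,\nu)$ whose facets move linearly with $\mu$ and $\nu$, and whose combinatorial type is constant on $C$.

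I would then invoke Ehrhart-type reasoning adapted to this setting: for genus $0$ the dependency of the counters can be resolved chain by chain, since along each chain the monotonicity constraints $1 = l_{\text{in-end}} \le \cdots \le o_k$ (or the strict versions) chain together into nested intervals, so that $F(\Gamma,\mu,\nu,p,q,r)$ factors as a product over chains of counts of weakly/strictly increasing integer sequences lying in intervals of length controlled by the $\mu_i$. Each such count is given by a binomial coefficient $\binom{\mu_i + (\text{strict corrections})}{s}$ or a product thereof, which is a polynomial in $\mu_i$ for fixed chain-length $s$ (the notion of chain-path of length $s$ from Definition \ref{def:chainpath} is exactly what indexes these counts). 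Summing these polynomial contributions over the finitely many combinatorial configurations of chains — a set that is constant on $C$ by Lemma \ref{lem:ch} — yields a polynomial in the entries of $\mu$ and $\nu$.

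The main obstacle I anticipate is bookkeeping the interaction between the weak-monotonicity block (the first $p$ vertices) and the strict-monotonicity block (the next $q$ vertices) along a single chain, since a chain may pass through both regimes and the transition point shifts the relevant counting from weakly increasing to strictly increasing sequences. Handling this cleanly requires decomposing each chain-path into its weak and strict segments and verifying that the resulting count is still polynomial in $\mu_i$ of the correct degree; the standard identity that the number of strictly increasing sequences of length $s$ in $\{1,\dots,N\}$ is $\binom{N}{s}$ while the number of weakly increasing ones is $\binom{N+s-1}{s}$, both polynomial in $N$, is what makes this go through, but care is needed to glue the two blocks so that the upper endpoint of the weak segment becomes the lower constraint of the strict segment. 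Once this segmentation is justified, polynomiality of each factor and hence of the whole product $F$ follows, completing the proof of the three claims and thereby of Proposition \ref{prop:piece}.
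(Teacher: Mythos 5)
Your overall strategy is the same as the paper's: counters can be assigned independently on each chain-path (since chains of bold edges are disjoint, the total count factors as a product over chain-paths), and each per-chain count should become polynomial once the chamber fixes enough discrete data. However, there is a genuine gap at precisely the step your proposal asserts rather than proves. You claim the counting polytope has ``combinatorial type constant on $C$,'' but that is the heart of the matter: whether $\max\{1,\mu_i-\omega(e_1)\}=\mu_i-\omega(e_1)$, and the relative order of the lower bounds $\mu_i-\omega(e_k)$ along a chain-path, are governed by the signs of the differences $\omega(e_k)-\omega(e_j)$, and nothing you say guarantees that these signs are resolved by the arrangement $\mathcal{W}$, whose walls have the special form $\sum_{i\in I}\mu_i-\sum_{j\in J}\nu_j=0$. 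The paper closes this by observing that for two edges $e_j,e_k$ (with $j<k$) on the \emph{same} chain-path the index sets of their weight forms are nested, $I_j\subset I_k$ and $J_j\subset J_k$, so that $\omega(e_k)-\omega(e_j)=\sum_{l\in I_k-I_j}\mu_l-\sum_{l\in J_k-J_j}\nu_l$ is again a linear form of wall type, hence of constant sign on each chamber of $\mathcal{W}$. This nestedness is special to genus $0$ chain-paths; in higher genus the analogous comparisons involve the cycle variables and genuinely force a refinement of $\mathcal{W}$ (see Section \ref{sec:pi1}), which shows your constancy assertion is not automatic and must be verified.

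Second, your explicit per-chain formula is incorrect as stated: condition (14) of Definition \ref{def:mono} imposes an \emph{edge-dependent} lower bound $\max\{1,\mu_i-\omega(e_k)\}\le l_k\le\mu_i$ on each counter, so the count is not the number of weakly or strictly increasing sequences in a single interval $\{1,\dots,N\}$ and is not given by $\binom{N+s-1}{s}$ or $\binom{N}{s}$. The paper instead writes the count as an iterated sum with edge-dependent lower limits (Equation (\ref{equ:chainsum})) and evaluates it via Faulhaber's formula, which is exactly where the chamber-determined total order on the bounds $\mu_i-\omega(e_k)$ enters. Your earlier Ehrhart-style framing (lattice points of a polytope whose facets translate by integral linear forms in $\mu,\nu$) would salvage the polynomiality conclusion once the chamber-constancy of the combinatorial type is established as above; so the gap is localized, but as written both the key verification and the closed-form count are missing or wrong.
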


\begin{proof}
We fix a reduced monodromy graph $\Gamma$ of type $(0,\mu,\nu,p,q,r)$. Assigning counters to $\Gamma$ translates to assigning counters to each chain-path in $\Gamma$ as follows:
Fix a chain-path $B$ of length $s$ and distribute the counter $l_{k}$ to the $k-$th edge $e_{k}$ in $B$. Moreover, let $B$ start at the in-end labeled $i$. Then $(l_1,\dots,l_s)$ satisfies the counter conditions if and only if\begin{enumerate}
\item $l_1\le\dots \le l_{s'}<\dots <\le l_s$ (see condition (13) in \cref{def:mono}) for some $t<s$,
\item $1\le l_1 \le \mu_i$ (see condition (14) in \cref{def:mono}),
\item $\max\{1,\mu_i-\omega(e_{k})\}\le l_{k}\le \mu_i$. (see condition (14) in \cref{def:mono})
\end{enumerate}
Thus we need to prove, that the cardinality of the set
\begin{equation}\label{equ:set}\{(l_1,\dots,l_s)|l_1\le\dots\le l_{s'}<\dots<l_s,\ l_1=1,\ \max\{1,\mu_i-\omega(e_{k})\}\le l_{k}\le \mu_i\}
\end{equation}
is piecewise polynomial in the entries of $\mu$ and $\nu$. We can express this cardinality as the following iterative sum
\begin{align}
\label{equ:chainsum}
\begin{split}
&\sum_{\substack{l_2=\\\max\{1,\mu_i-\omega(e_1)\}}}^{\mu_i}\dots\sum_{\substack{l_{s'}=\\\max\{l_{s-1},\mu_i-\omega(e_2)\}}}^{\mu_i}\sum_{\substack{l_{s+1}=\\\max\{l_{s-1}+1,\mu_i-\omega(e_2)\}}}^{\mu_i} \dots\\&\sum_{\substack{l_{s-1}=\\\max\{l_{s-2}+1,\mu_i-\omega(e_{s-1})\}}}^{\mu_i}\mu_i-\max\{l_{s-1},\mu_i-\omega(e_{s})\}.
\end{split}
\end{align}
If we know whether $\max\{\mu_i-\omega(e_1),1\}=\mu_i-\omega(e_1)$ and if we have a total ordering on the $\mu_i-\omega(e_{k})$, we can compute this sum using Faulhaber's formula \begin{equation}\sum_{k=1}^nk^p=\frac{1}{p+1}\sum_{j=0}^p(-1)^j\binom{p+1}{j}B_jn^{p+1-j},\end{equation} where $B_j$ is the $j-$th Bernoulli number. Notice, that the right hand side is a polynomial in $n$. Thus, the cardinality of the set in \cref{equ:set} is a polynomial in $\mu_i$ and the edge weights $\omega(e)$ (and since $\omega(e)$ is linear form in the entries of $\mu$ and $\nu$, the cardinality is a polynomial in the entries of $\mu$ and $\nu$), whenever we know the value of $\max\{\mu_1-\omega(e_1),1\}$ and if we have a total ordering on the $\mu_i-\omega(e_{k})$, which we can compute iteratively. Now, we show that choosing a chamber $C$ for $\mu$ and $\nu$ implies those conditions.\\
 Let $I_{k}\subset\{1,\dots,\ell(\mu)\}$ and $J_{k}\subset\{1,\dots,\ell(\nu)\}$ for $k=1,\dots,s$, such that
 \begin{equation}
 \omega(e_{k})=\sum_{j\in I_{k}}\mu_j-\sum_{j\in J_{k}}\nu_j
 \end{equation}
 for all $k$. We observe that for the edge $e_1$, we get \begin{equation}\max\{1,\mu_i-\omega(e_1)\}=\mu_i-\omega(e_1)\end{equation} if and only if \begin{equation}\sum_{j\in J_1} \nu_j-\sum_{j\in I_1-\{i\}}\mu_j>0.\end{equation} This implies that in a fixed chamber $C$, we know the value of $\max\{1,\mu_i-\omega(e_1)\}$. Moreover, we fix two edges $e_j$ and $e_{k}$, such that $j<k$. We see that since $e_j$ and $e_k$ are in the same chain-path and $e_{k}$ appears later than $e_j$, we have $I_j\subset I_{k}$ and $J_j\subset J_{k}$. Thus $\omega(e_{k})>\omega(e_j)$ if and only if \begin{equation}\sum_{l\in I_{k}-I_j}\mu_l-\sum_{l\in J_{k}-J_j}\nu_l>0.\end{equation}
 Thus we can answer whether $\omega(e_k)>\omega(e_j)$ in each chamber.\\
 Let $P_B(C)$ be the polynomial computing the cardinality of the set in \cref{equ:set} associated to the chain-path $B$ in the chamber $C$. Since we can choose counters in each chain-path independentely (they do not intersect, since chains of bold edges do not intersect), in the chamber $C$, the function $F(\Gamma,\mu,\nu,p,q,r)$ is given by $\prod P_B(C)$, where we take the product over all chain-paths. Since the graph is finite, $F(\Gamma,\mu,\nu,p,q,r)$ is a polynomial in the entries for $\mu$ and $\nu$ in each chamber $C$ as desired.
\end{proof}

We have now derived the following algorithm, which computes the polynomials $m_{p,q,r;\ell(\mu),\ell(\nu)}^{\le,<,(2)}(C)$ for $p+q+r=-2+\ell(\mu)+\ell(\nu)$, which also gives polynomials expressing $H_{p,q,r;\ell(\mu),\ell(\nu)}^{\le,<,(2)}$ in each chamber $C$ for $p=0$ or $q=0$.

\begin{Algorithm}
\label{alg:1}
\begin{algorithm}[H]{\textbf{Computing polynomials in genus $0$}}
\begin{algorithmic}[1]
\Procedure{Hurwitz}{$\ell(\mu),\ell(\nu),p,q,r,C$}\Comment{The polynomial expressing $M_{p,q,r;\mu,\nu}^{\le,<,(2)}$ and $H_{p,q,r;\mu,\nu}^{\le,<,(2)}$ in the chamber $C$ for $p=0$ or $q=0$}
\State $\mathfrak{G}(\ell(\mu),\ell(\nu),C,k)\gets$ Set of monodromy graphs of type $(0,\mu,\nu,p,q,r)$ in $C$
\ForAll{$\Gamma\in\mathfrak{G}(\ell(\mu),\ell(\nu),C,k)$}
\State $E(\Gamma)\gets$ Set of edges of $\Gamma$
\ForAll{$e\in E(\Gamma)$}
\State $\omega(e)\gets$ linear form in $\mu$ and $\nu$ expressing the weight of $e$
\EndFor
\State $W(\Gamma)\gets$ $\prod \omega(e)$, where the product is taken over all non-bold edges which are not adjacent to out-ends
\State $CP(\Gamma)\gets$ Set of all chain-paths in $\Gamma$
\ForAll{$P\in CP(\Gamma)$}
\State $q_P\gets$ Polynomial expressing \cref{equ:chainsum} in $C$
\EndFor
\EndFor
\State $C(\Gamma)\gets$ $\prod_{P\in CP(\Gamma)} q_P$\Comment{Polynomial obtained from the chain-paths}
\State $m(\Gamma)\gets$ The polynomial $W(\Gamma)\cdot C(\Gamma)$\Comment{Polynomial obtained from the edge weights}
\State $m_{p,q,r;\ell(\mu),\ell(\nu)}^{\le,<,(2)}(C)\gets$ $\sum_{\Gamma\in\mathfrak{G}(\ell(\mu),\ell(\nu),C,k)}m(\Gamma)$
\State \textbf{return} $m_{p,q,r;\ell(\mu),\ell(\nu)}^{\le,<,(2)}(C)$\Comment{The desired polynomial is $m_{p,q,r;\ell(\mu),\ell(\nu)}^{\le,<,(2)}(C)$}
\If{$p=0$ or $q=0$}
\State \textbf{return} $\frac{1}{\mu_1\cdots\mu_{\ell(\mu)}}m_{p,q,r;\ell(\mu),\ell(\nu)}^{\le,<,(2)}(C)$\Comment{Polynomial expressing $H_{p,q,r;\ell(\mu),\ell(\nu)}^{\le,<,(2)}$ in $C$}
\EndIf
\EndProcedure
\end{algorithmic}\qed
\end{algorithm}
\end{Algorithm}

\begin{Example}
\label{ex:c}
 We use this algorithm to compute the polynomials for $H_{0,2,0,\mu,\nu}^{\le,<,(2)}$ for $\ell(\mu)=\ell(\nu)=2$. The possible graphs are illustrated in \cref{fig:ex}. There are four chambers in that case as illustrated in Figure 9 in \cite{CJM}.\vspace{\baselineskip}\\
 We start with the chamber $C_1$ given by $\mu_1>\nu_1,\mu_1>\nu_2,\mu_2<\nu_1,\mu_2<\nu_2$. In this chamber the graphs $\mathrm{I.a,I.b,IV,V,VI,VII}$ contribute positive multiplicities:

\begin{align}
   &\mathrm{mult}(\mathrm{I.a})=\mu_1\left(\frac{\mu_2^2}{2}+\frac{\mu_2}{2}\right),\\
   &\mathrm{mult}(\mathrm{I.b})=\mu_1\left(\frac{\mu_2^2}{2}+\frac{\mu_2}{2}\right),\\
   &\mathrm{mult}(\mathrm{IV})=(\mu_1-\nu_2)\mu_2(\mu_1-\nu_2),\\
   &\mathrm{mult}(\mathrm{V})=(\mu_1-\nu_1)\mu_2(\mu_1-\nu_1),\\
   &\mathrm{mult}(\mathrm{VI})=(\mu_1-\nu_2)\mu_2\nu_2,\\
   &\mathrm{mult}(\mathrm{VII})=(\mu_1-\nu_1)\mu_2\nu_1.\\
\intertext{Adding all these contributions we obtain}
&m_{0,2,0,\mu,\nu}^{\le,<,(2)}(C_1)=\mu_1\mu_2(2\mu_1+\mu_2-\nu_1-\nu_2+1)=\mu_1\mu_2(\mu_1+1).
\intertext{Next we look at the chamber $C_2$ given by $\mu_1<\nu_1,\mu_1>\nu_2,\mu_2<\nu_1,\mu_2>\nu_2$. In this chamber the graphs $\mathrm{I.a,I.b,IV,V}$ contribute positive multiplicities:}
  &\mathrm{mult}(\mathrm{I.a})=\mu_1\left(\frac{\mu_2^2}{2}+\frac{\mu_2}{2}\right),\\
  &\mathrm{mult}(\mathrm{I.b})=\mu_1\left(\mu_2\nu_2-\frac{\nu_2^2}{2}+\frac{\nu_2}{2}\right),\\
  &\mathrm{mult}(\mathrm{III})=\mu_1\left(\frac{\mu_2^2}{2}+\frac{\mu_2}{2}-\mu_2\nu_2+\frac{\nu_2^2}{2}-\frac{\nu_2}{2}\right),\\
  &\mathrm{mult}(\mathrm{IV})=(\mu_1-\nu_2)\mu_2\nu_2,\\
  &\mathrm{mult}(\mathrm{VI})=(\mu_1-\nu_2)\mu_2(\mu_1-\nu_2).
\intertext{Adding all these contributions we obtain}
&m_{0,2,0,\mu,\nu}^{\le,<,(2)}(C_2)=\mu_1\mu_2\left(\nu_1+1\right).
\intertext{Let the chamber $C_3$ be given by $\mu_1<\nu_1,\mu_1<\nu_2,\mu_2>\nu_1,\mu_2>\nu_2$. In this chamber the graphs $\mathrm{I.a,I.b,II,III}$ contribute positive multiplicities:}
  &\mathrm{mult}(\mathrm{I.a})=\mu_1\left(\mu_2\nu_1-\frac{\nu_1^2}{2}+\frac{\nu_1}{2}\right),\\
  &\mathrm{mult}(\mathrm{I.b})=\mu_1\left(\mu_2\nu_2-\frac{\nu_2^2}{2}+\frac{\nu_2}{2}\right),\\
  &\mathrm{mult}(\mathrm{II})=\mu_1\left(\frac{\mu_2^2}{2}+\frac{\mu_2}{2}-\mu_2\nu_1+\frac{\nu_1^2}{2}-\frac{\nu_1}{2}\right),\\
  &\mathrm{mult}(\mathrm{III})=\mu_1\left(\frac{\mu_2^2}{2}+\frac{\mu_2}{2}-\mu_2\nu_2+\frac{\nu_2^2}{2}-\frac{\nu_2}{2}\right).
\intertext{Adding all these contributions we obtain}
&m_{0,2,0,\mu,\nu}^{\le,<,(2)}(C_3)=\mu_1\mu_2(\mu_2+1).
\intertext{Lastly, we consider the chamber $C_4$ given by $\mu_1>\nu_1,\mu_1<\nu_2,\mu_2>\nu_1,\mu_2<\nu_2$. In this chamber the graphs $\mathrm{I.a,I.b,II,V}$ contribute positive multiplicities:}
  &\mathrm{mult}(\mathrm{I.a})=\mu_1\left(\mu_2\nu_1-\frac{\nu_1^2}{2}+\frac{\nu_1}{2}\right),\\
  &\mathrm{mult}(\mathrm{I.b})=\mu_1\left(\frac{\mu_2^2}{2}+\frac{\mu_2}{2}\right),\\
  &\mathrm{mult}(\mathrm{II})=\mu_1\left(\frac{\mu_2^2}{2}+\frac{\mu_2}{2}-\mu_2\nu_1+\frac{\nu_1^2}{2}-\frac{\nu_1}{2}\right),\\
  &\mathrm{mult}(\mathrm{V})=(\mu_1-\nu_1)\mu_2\nu_2,\\
  &\mathrm{mult}(\mathrm{VII})=(\mu_1-\nu_1)\mu_2(\mu_1-\nu_1).
 \intertext{Adding all these contributions we obtain}
 &m_{0,2,0,\mu,\nu}^{\le,<,(2)}(C_4)=\mu_1\mu_2\left(\nu_2+1\right).
\end{align}
Thus we see 
\begin{align} 
 H_{0,2,0,\mu,\nu}^{\le,<,(2)}(C_1)=\mu_1+1,\ &\ H_{0,2,0,\mu,\nu}^{\le,<,(2)}(C_2)=\nu_1+1\\
 H_{0,2,0,\mu,\nu}^{\le,<,(2)}(C_3)=\mu_2+1,\ &\ H_{0,2,0,\mu,\nu}^{\le,<,(2)}(C_4)=\nu_2+1
 \end{align}
\end{Example}

\begin{figure}
\centering
\begin{tikzpicture}
\draw [dashed] (0,0)--(1,1);
\draw [line width=0.6mm] (0,2)--(1,1);
\draw [line width=0.6mm] (1,1)--(2,1);
\draw [line width=0.6mm] (2,1)--(3,2);
\draw (2,1)--(3,0);
\draw (0,2) node[anchor=east] {$\mu_2$};
\draw (0,0) node[anchor=east] {$\mu_1$};
\draw (3,2) node[anchor=west] {$\nu_1$};
\draw (3,0) node[anchor=west] {$\nu_2$};
\draw (-1,1) node[anchor=east] {$I.a$};
\end{tikzpicture}\begin{tikzpicture}
\draw [dashed] (0,0)--(1,1);
\draw [line width=0.6mm] (0,2)--(1,1);
\draw [line width=0.6mm] (1,1)--(2,1);
\draw [line width=0.6mm] (2,1)--(3,2);
\draw (2,1)--(3,0);
\draw (0,2) node[anchor=east] {$\mu_2$};
\draw (0,0) node[anchor=east] {$\mu_1$};
\draw (3,2) node[anchor=west] {$\nu_2$};
\draw (3,0) node[anchor=west] {$\nu_1$};
\draw (-1,1) node[anchor=east] {$I.b$};
\end{tikzpicture}\begin{tikzpicture}
\draw [line width=0.6mm] (0,2)--(1,2);
\draw (1,2)--(3,2);
\draw [line width=0.6mm] (1,2)--(2,1);
\draw [dashed] (0,0)--(2,1);
\draw [line width=0.6mm](2,1)--(3,0);
\draw (0,2) node[anchor=east] {$\mu_2$};
\draw (0,0) node[anchor=east] {$\mu_1$};
\draw (3,2) node[anchor=west] {$\nu_1$};
\draw (3,0) node[anchor=west] {$\nu_2$};
\draw (-1,1) node[anchor=east] {$II$};
\end{tikzpicture}

\begin{tikzpicture}
\draw [line width=0.6mm] (0,2)--(1,2);
\draw (1,2)--(3,2);
\draw [line width=0.6mm] (1,2)--(2,1);
\draw [dashed] (0,0)--(2,1);
\draw [line width=0.6mm](2,1)--(3,0);
\draw (0,2) node[anchor=east] {$\mu_2$};
\draw (0,0) node[anchor=east] {$\mu_1$};
\draw (3,2) node[anchor=west] {$\nu_2$};
\draw (3,0) node[anchor=west] {$\nu_1$};
\draw (-1,1) node[anchor=east] {$III$};
\end{tikzpicture}\begin{tikzpicture}
\draw [line width=0.6mm] (0,2)--(2,2);
\draw [line width=0.6mm] (2,2)--(3,2);
\draw [line width=0.6mm] (0,0)--(1,0);
\draw (1,0)--(3,0);
\draw [dashed] (1,0)--(2,2);
\draw (0,2) node[anchor=east] {$\mu_2$};
\draw (0,0) node[anchor=east] {$\mu_1$};
\draw (3,2) node[anchor=west] {$\nu_1$};
\draw (3,0) node[anchor=west] {$\nu_2$};
\draw (-1,1) node[anchor=east] {$IV$};
\end{tikzpicture}\begin{tikzpicture}
\draw [line width=0.6mm] (0,2)--(2,2);
\draw [line width=0.6mm] (2,2)--(3,2);
\draw [line width=0.6mm] (0,0)--(1,0);
\draw (1,0)--(3,0);
\draw [dashed] (1,0)--(2,2);
\draw (0,2) node[anchor=east] {$\mu_2$};
\draw (0,0) node[anchor=east] {$\mu_1$};
\draw (3,2) node[anchor=west] {$\nu_2$};
\draw (3,0) node[anchor=west] {$\nu_1$};
\draw (-1,1) node[anchor=east] {$V$};
\end{tikzpicture}

\begin{tikzpicture}
\draw [line width=0.6mm] (0,2)--(2,2);
\draw [line width=0.6mm] (2,2)--(3,2);
\draw [line width=0.6mm](0,0)--(1,0);
\draw (1,0)--(3,0);
\draw [dashed] (1,0)--(2,2);
\draw (0,2) node[anchor=east] {$\mu_2$};
\draw (0,0) node[anchor=east] {$\mu_1$};
\draw (3,2) node[anchor=west] {$\nu_2$};
\draw (3,0) node[anchor=west] {$\nu_1$};
\draw (-1,1) node[anchor=east] {$VI$};
\end{tikzpicture}\begin{tikzpicture}
\draw [line width=0.6mm] (0,2)--(2,2);
\draw [line width=0.6mm] (2,2)--(3,2);
\draw [line width=0.6mm](0,0)--(1,0);
\draw (1,0)--(3,0);
\draw [dashed] (1,0)--(2,2);
\draw (0,2) node[anchor=east] {$\mu_2$};
\draw (0,0) node[anchor=east] {$\mu_1$};
\draw (3,2) node[anchor=west] {$\nu_1$};
\draw (3,0) node[anchor=west] {$\nu_2$};
\draw (-1,1) node[anchor=east] {$VII$};
\end{tikzpicture}
  \caption{The graphs appearing for $(0,\mu,\nu,2,0,0)$ for $\ell(\mu)=\ell(\nu)=2$.}
 \label{fig:ex}
\end{figure}
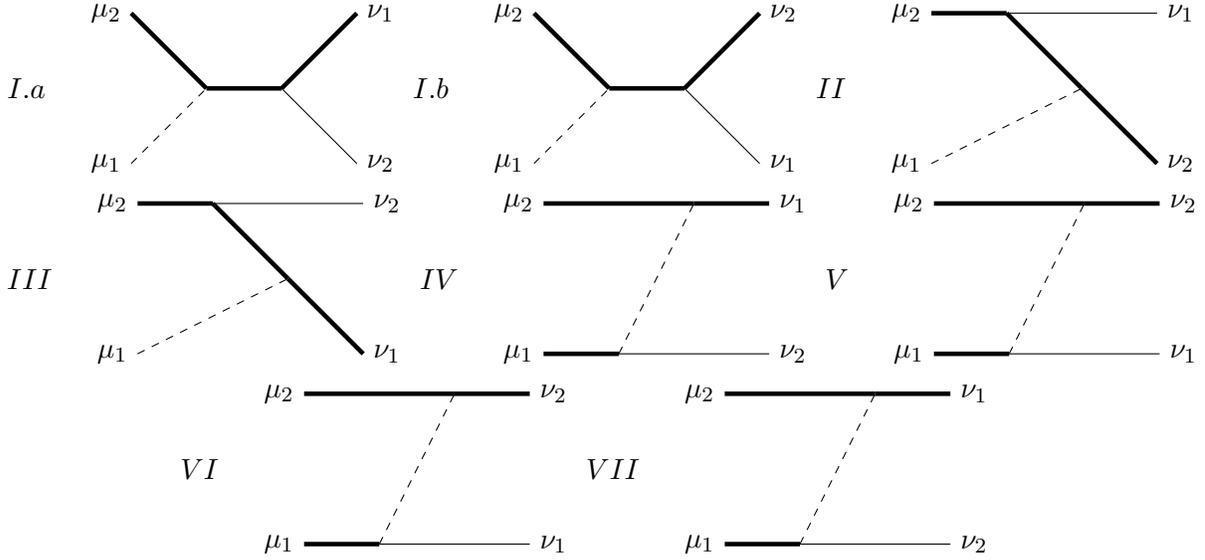

\subsubsection{Piecewise polynomiality in arbitrary genus}
\label{sec:pi1}
For higher genera, we once again study 
\begin{equation}
M_{p,q,r;\mu,\nu}^{\le,<,(2)}=\sum m(\Gamma).
\end{equation}
For the rest of this subsection we let $g,p,q,r$ be non-negative integers, such that $p+q+r=2g-2+\ell(\mu)+\ell(\nu)$ for the considered partitions $\mu,\nu$. Let $\Gamma$ be a triply interpolated monodromy graph of type $(g,\mu,\nu,p,q,r)$. We introduce a variable $x_i$ for each of the $g$ cycles in $\Gamma$. Then by a similar argument as before, each edge weight may be expressed as a linear polynomial in the entries of $\mu$ and $\nu$ and in the $x_i$. We still require all edge weights $\omega(e)$ to be greater than zero and thus obtain a hyperplane arrangement $H$ in the entries of $\mu$ and $\nu$ and $x_i$. We will refine this hyperplane arrangement along the way and obtain the piecewise polynomiality result that way.\\
As in the genus $0$ case, we rewrite the equation above by passing over to reduced monodromy graphs:
\begin{equation}
\label{equ:gen}
M_{p,q,r;\mu,\nu}^{\le,<,(2)}=\sum_{\Gamma}\prod\omega(e)F(\Gamma,\mu,\nu,p,q,r),
\end{equation}
where we sum over all reduced monodromy graphs of type $(g,\mu,\nu,p,q,r)$. Since there are additional variables $x_i$ the order on each chain-path is no longer determined just by the entries of $\mu$ and $\nu$. Thus, we need to refine the sum further. We begin by restricting to chambers due to a generalisation of \cref{lem:ch} which was proved in Theorem 3.9 in \cite{CJMa}:

\begin{thm}[\cite{CJMa}]
The set of reduced monodromy graphs of type $(g,\mu,\nu,p,q,r)$ is the same for all $\mu,\nu$ in the same chamber $C$.
\end{thm}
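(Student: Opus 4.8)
The plan is to reduce the statement to the genus-$0$ result \cref{lem:ch} by isolating precisely what depends on the numerical entries of $\mu$ and $\nu$. First I would observe that the \emph{combinatorial type} of a reduced monodromy graph --- its underlying graph, the map to $[0,b+1]$, the distribution of in-ends, out-ends and chains, and the local colouring at each of the first $p+q$ inner vertices --- is determined entirely by the discrete data $(g,\ell(\mu),\ell(\nu),p,q,r)$ and makes no reference to the actual values $\mu_i,\nu_j$. Consequently, the only mechanism by which the set of reduced monodromy graphs can change as $(\mu,\nu)$ varies is the requirement that a type admit a valid weighting, i.e.\ an assignment of positive integer weights compatible with the boundary data and the balancing condition. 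Thus it suffices to show that \emph{realisability} of a fixed combinatorial type is constant on each chamber $C$.

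Next I would describe the weights as a flow. Orienting every edge in the direction of increasing position, the balancing condition makes $\omega$ a flow on $\Gamma$ with prescribed sources $\mu$ over $0$ and sinks $\nu$ over $b+1$. Introducing the loop variables $x_1,\dots,x_g$ (one per independent cycle) as in the discussion preceding the theorem, one expresses each edge weight as an affine-linear form
\begin{equation}
\omega(e)=\sum_{i\in I_e}\mu_i-\sum_{j\in J_e}\nu_j+\sum_{l=1}^g c_{e,l}\,x_l,
\end{equation}
where the subsets $I_e\subset\{1,\dots,\ell(\mu)\}$ and $J_e\subset\{1,\dots,\ell(\nu)\}$ arise exactly as in the cut argument of \cref{lem:ch} by severing $\Gamma$ at the position of $e$, and the integers $c_{e,l}$ record how the $l$-th cycle threads across $e$. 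For a bridge $e$ all the $c_{e,l}$ vanish and $\omega(e)$ is precisely one of the linear forms defining the arrangement $\mathcal{W}$; hence, as in genus $0$, its positivity is a chamber-constant condition.

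The genuinely new content is the treatment of the edges carrying loop variables. For these the constraints $\omega(e)>0$ cut out a rational polytope $P_\Gamma(\mu,\nu)\subset\mathbb{R}^g$, and the type $\Gamma$ belongs to the set of reduced monodromy graphs if and only if $P_\Gamma(\mu,\nu)$ contains a lattice point. I would prove that nonemptiness of $P_\Gamma(\mu,\nu)$ is chamber-constant by a max-flow/min-cut (equivalently Hoffman-type feasibility) argument: a flow with all edge weights positive and the prescribed boundary divisor exists if and only if a finite collection of cut inequalities holds, and each such inequality rearranges into a form $\sum_{i\in I}\mu_i-\sum_{j\in J}\nu_j>0$, again a defining form of $\mathcal{W}$. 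Since only finitely many cuts occur and each contributes a chamber-sign condition, realisability is constant on $C$; combining this with the bridge case and the chamber-independence of the combinatorial type completes the argument.

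The main obstacle is exactly this last step. In genus $0$ every inner edge is a bridge, so \cref{lem:ch} reads off its weight as a single $\mathcal{W}$-form and positivity is immediate, whereas in positive genus the loop variables mean that an individual edge weight is no longer a chamber form and a local per-edge positivity check is insufficient. The work lies in showing that the \emph{joint} solvability of the positivity system --- the nonemptiness of $P_\Gamma(\mu,\nu)$ --- collapses to finitely many sign conditions on the $\mathcal{W}$-forms, which is where the flow/cut reformulation is essential.
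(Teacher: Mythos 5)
There is a genuine gap, and it sits exactly at the pivot in your third paragraph. You first assert that a combinatorial type $\Gamma$ belongs to the set of reduced monodromy graphs if and only if $P_\Gamma(\mu,\nu)$ contains a \emph{lattice} point (correct, since the weight conditions demand positive integer weights), but the flow/cut argument you then give establishes chamber-constancy of \emph{real} nonemptiness of the open polytope, and for these strict-inequality polytopes the two notions genuinely diverge. Concretely: take a type in which a bridge of weight $\delta=\sum_{i\in I}\mu_i-\sum_{j\in J}\nu_j$ splits into two parallel edges which immediately rejoin (the unique cycle), so the cycle edges carry weights $x_1$ and $\delta-x_1$. A positive integer weighting exists if and only if $\delta\ge 2$, whereas the chamber only records the sign $\delta>0$; at points of $C$ with $\delta=1$ the real polytope is the nonempty open interval $0<x_1<1$ containing no lattice point, and the type arises from no factorisation, while deeper in the same chamber it does. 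So integer realisability is \emph{not} constant on chambers, and no totally-unimodular or Hoffman-type device can rescue this: the integral feasibility condition for a flow with lower bounds $1$ across a one-directed cut reads
\begin{equation}
\sum_{i\in I}\mu_i-\sum_{j\in J}\nu_j\;\ge\;\#\{\text{edges crossing the cut}\},
\end{equation}
which is not a sign condition on a defining form of $\mathcal{W}$.

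The repair is interpretive rather than technical, and it is worth noting that the paper itself offers no proof here --- the theorem is imported from Theorem 3.9 of \cite{CJMa}, and the genus-$0$ argument of Lemma \ref{lem:ch} sidesteps the issue entirely because in genus $0$ every inner edge is a bridge whose weight is a single $\mathcal{W}$-form. The reading under which the theorem is true, and under which it is actually used in Equation (\ref{equ:gen}) and Algorithm \ref{alg:2}, is that a reduced monodromy graph is a combinatorial type whose edge weights are recorded as the linear forms $\omega(e)$ in $\mu,\nu,\underline{x}$; the indexing set of such types with nonempty \emph{real} open polytope is chamber-constant, and types whose polytopes happen to contain no lattice point at a given $(\mu,\nu)$ simply contribute $0$ to the lattice-point sums without leaving the indexing set (this is also why the overall count stays polynomial across such points, cf.\ Theorem \ref{thm:gjv}). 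Under that reading, your min-cut characterisation of strict real feasibility --- every one-directed cut must have strictly positive net supply, and each such supply is a form $\sum_{i\in I}\mu_i-\sum_{j\in J}\nu_j$, well-defined because all edges are oriented towards increasing position --- is sound and is a genuinely different, and arguably cleaner, route than the polytope analysis in \cite{CJMa}. To complete it you should delete the lattice-point equivalence, state the theorem in the formal-weights reading, and spell out the Gale--Hoffman duality step showing that the one-directed cut conditions are not merely necessary but sufficient for strict real feasibility.
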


Thus, we know that $\prod\omega(e)$ is a polynomial in the $x_i$ and the entries of $\mu$ and $\nu$ in each chamber. Now, we want to express $F(\Gamma,\mu,\nu,p,q,r)$ as a polynomial as well. They key point in the proof of \cref{lem:cou} was the fact that the graph structure imposed an ordering on the edge weights $\omega(e)$ and $1$. For higher genera this is no longer true as depending on the values of $x_i$ there may be several orderings on each chain-path (see \cref{ex:g}). To deal with this problem, we introduce the notion of an ordering on a reduced monodromy graph.

\begin{Definition}
An ordering $O$ on a reduced monodromy graph $\Gamma$ is a partial ordering on the edge weights and $1$, that restricted to each chain-path and $1$ is a total ordering. We denote by $\mathcal{O}(\Gamma)$ the possible orderings on $\Gamma$.
\end{Definition}

Next, we refine the function $F(\Gamma,\mu,\nu,p,q,r)$.
\begin{Definition}
Let $\Gamma$ be a reduced monodromy graph and $O$ an ordering on $\Gamma$. Then we define $F(\Gamma,\mu,\nu,p,q,r,\underline{x},O)$ (where $\underline{x}=x_1,\dots,x_g$) to be the function counting all possible counter distributions on $\Gamma$ compatible with $O$.
\end{Definition}
We want to argue that $F(\Gamma,\mu,\nu,p,q,r,\underline{x},O)$ is a polynomial in the $x_i$ and the entries of $\mu$ and $\nu$. However, we have to be careful about the values of $\underline{x}$, since not all choices of $\underline{x}$ are compatible with $O$. Thus, we define $Q(\Gamma,\mu,\nu,O)$ to be the set of all values for $x_i$ fulfilling the ordering $O$. It is easy to see, that this set is convex and the $x_i$ are bounded since all edge weights have to be positive. We thus obtain the following Lemma:

\begin{Lemma}
The set $Q(\Gamma,\mu,\nu,O)$ is a polytope with equations given by linear forms in the entries of $\mu$ and $\nu$.
\end{Lemma}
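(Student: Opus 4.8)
The plan is to realise $Q(\Gamma,\mu,\nu,O)$ as the intersection of finitely many half-spaces in $\mathbb{R}^g$ whose bounding hyperplanes move linearly with $\mu$ and $\nu$, and then to verify boundedness. First I would fix a spanning tree $T$ of $\Gamma$. Its $g$ complementary (non-tree) edges $e_1,\dots,e_g$ form a basis of $H_1(\Gamma)$, and I would take as coordinates $x_i:=\omega(e_i)$. Using the balancing condition for monodromy graphs at each inner vertex together with the prescribed end-weights $\mu$ and $\nu$, the weights of the tree edges are determined; solving this linear system expresses every edge weight in the form
\[
\omega(e)=\sum_{i=1}^{g}c^{e}_i\,x_i+L_e(\mu,\nu),
\]
where each $c^{e}_i\in\mathbb{Z}$ is the signed multiplicity with which the $i$-th basis cycle runs through $e$, and $L_e$ is an integer linear form in the entries of $\mu$ and $\nu$. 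This is precisely the ``similar argument as before'' invoked for Equation (\ref{equ:gen}).

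Second, I would translate the defining conditions of $Q$ into these coordinates. By definition $\underline{x}\in Q(\Gamma,\mu,\nu,O)$ exactly when (i) all weights are positive, i.e. $\omega(e)\ge 1$ for every edge $e$, and (ii) the ordering $O$ is respected, i.e. for every comparison $\omega(e)\le\omega(e')$ (respectively a comparison against the constant $1$) prescribed by $O$ one has $\omega(e')-\omega(e)\ge 0$. Substituting the expressions above, each of these becomes an affine inequality
\[
\sum_{i=1}^{g}\bigl(c^{e'}_i-c^{e}_i\bigr)x_i\ \ge\ L_e(\mu,\nu)-L_{e'}(\mu,\nu),
\]
whose coefficients in the $x_i$ are integers independent of $\mu,\nu$ and whose right-hand side is a linear form in the entries of $\mu$ and $\nu$. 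Hence $Q(\Gamma,\mu,\nu,O)$ is a finite intersection of such half-spaces, so it is a rational convex polyhedron whose facet equations depend linearly on $\mu,\nu$, which is exactly the asserted structure.

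Finally, for boundedness: since each coordinate $x_i=\omega(e_i)$ is itself an edge weight, positivity together with the bound $\omega(e)\le d=\sum_k\mu_k$ gives $1\le x_i\le d$ for all $i$, so $Q$ is contained in the cube $[1,d]^g$ and is therefore bounded; a bounded rational convex polyhedron is a polytope. Equivalently, the recession cone is trivial, since at the cut that creates the $i$-th cycle the two parallel edges carry weights that are affine in $x_i$ with opposite signs, so no nonzero direction keeps every weight from eventually violating positivity. The step I expect to be the only genuine obstacle is the first one: one must check that the balancing equations together with the fixed boundary weights leave exactly $g$ free parameters, i.e. that the tree-edge weights are uniquely solvable in terms of the $x_i$ and of $\mu,\nu$. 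This is the rank computation matching the dimension of the solution space to the first Betti number $g$; once it is in place, the linear dependence on $\mu,\nu$ and the boundedness are routine.
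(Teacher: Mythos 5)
Your proposal is correct and takes essentially the same approach as the paper: there, too, one variable $x_i$ is introduced per independent cycle, every edge weight is observed to be linear in the $x_i$ and the entries of $\mu,\nu$, and the Lemma follows from convexity of the linear ordering/positivity constraints together with boundedness forced by positivity of all edge weights. Your spanning-tree coordinates, the half-space description, and the explicit bound $1\le x_i\le d$ simply make precise what the paper dismisses as ``easy to see.''
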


\begin{Definition}
We denote the hyperplane arrangement in $\mathbb{N}^{\ell(\mu)+\ell(\nu)}$ induced by the combinatorial types of $Q(\Gamma,\mu,\nu,O)$ by $\mathcal{V}(\Gamma,O)$
\end{Definition}

By same argument as in \cref{lem:cou}, we also get the following Lemma:

\begin{Lemma}
The function $F(\Gamma,\mu,\nu,p,q,r,\underline{x},O)$ is a polynomial in $\underline{x}$ and the entries of $\mu$ and $\nu$ for $\underline{x}\in Q(\Gamma,\mu,\nu,O)$.
\end{Lemma}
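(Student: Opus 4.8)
The plan is to mirror the proof of Lemma~\ref{lem:cou} almost verbatim, the only new feature being that the comparison data which in genus $0$ was supplied by the choice of chamber is now supplied by the ordering $O$ together with the hypothesis $\underline{x}\in Q(\Gamma,\mu,\nu,O)$. As there, I first reduce the count of admissible counter distributions on $\Gamma$ to an independent count on each chain-path: since chains of bold edges are pairwise disjoint (condition (9) of Definition~\ref{def:mono}), a counter distribution on $\Gamma$ is the same datum as a choice of compatible counters on each chain-path $B$ separately, so $F(\Gamma,\mu,\nu,p,q,r,\underline{x},O)$ factors as a product $\prod_B P_B$ over the chain-paths $B$ of $\Gamma$, and it suffices to prove each factor is a polynomial.

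Next, for a fixed chain-path $B$ of length $s$ with edges $e_1,\dots,e_s$ and starting in-end labelled $i$, the admissible counters $(l_1,\dots,l_s)$ are exactly those satisfying the monotonicity chain (weak on the first block, strict afterwards) together with the box constraints $\max\{1,\mu_i-\omega(e_k)\}\le l_k\le\mu_i$ of condition (14). The number of such tuples is the nested sum of the shape of Equation~(\ref{equ:chainsum}). The crucial point is that this sum can be evaluated by iterated applications of Faulhaber's formula only once two pieces of discrete data are known: first, for each $k$ whether $\max\{1,\mu_i-\omega(e_k)\}$ equals $1$ or $\mu_i-\omega(e_k)$; and second, the relative order of the lower bounds $\mu_i-\omega(e_k)$ and of $1$ along $B$. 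By definition an ordering $O$ restricts to a \emph{total} order on the edge weights of each chain-path together with $1$; this is precisely the data required. Fixing $O$ therefore fixes all the $\max$-resolutions and all the relative orders of the summation bounds, so the nested sum becomes an honest iterated polynomial summation.

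Carrying out the summations from the innermost variable outward, each step integrates a polynomial summand against bounds that are themselves polynomial (indeed linear) in $\mu_i$ and the $\omega(e_k)$, and Faulhaber's formula returns a polynomial in those bounds. By the higher-genus analogue of Corollary~\ref{lem:weight} each $\omega(e_k)$ is a linear form in the entries of $\mu,\nu$ and the variables $\underline{x}$, so $P_B$ is a polynomial in $\underline{x}$ and the entries of $\mu$ and $\nu$; taking the finite product over all chain-paths yields the claim for $F$.

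The step requiring care --- and the reason the statement is restricted to $\underline{x}\in Q(\Gamma,\mu,\nu,O)$ --- is ensuring that the polynomial produced by this formal evaluation actually equals the count. Off $Q(\Gamma,\mu,\nu,O)$ the ordering $O$ need not be realised by the true edge weights, and then the $\max$-resolutions and the nesting of the bounds prescribed by $O$ would be inconsistent (a lower summation limit could exceed its upper limit), so the Faulhaber expression would no longer count lattice points. On $Q(\Gamma,\mu,\nu,O)$, however, the $x_j$ realise $O$ by construction, so every comparison resolves exactly as $O$ dictates and all summation ranges are correctly ordered; moreover $Q(\Gamma,\mu,\nu,O)$ is a single combinatorial type, so the resolved comparisons are constant across it and one and the same polynomial computes $P_B$ throughout. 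This is the only genuine obstacle; once it is settled the argument is identical to the genus $0$ case.
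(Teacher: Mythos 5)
Your proposal is correct and follows essentially the same route as the paper, whose entire proof of this lemma is the remark that it holds ``by the same argument as in Lemma \ref{lem:cou}'': decompose the count over the pairwise disjoint chain-paths, evaluate each nested sum of the shape of \cref{equ:chainsum} by iterated Faulhaber summation once the comparison data is fixed, with the ordering $O$ --- realised by the actual edge weights precisely when $\underline{x}\in Q(\Gamma,\mu,\nu,O)$ --- now supplying the comparison data that the chamber $C$ supplied in genus $0$. Your closing paragraph on why the restriction to $Q(\Gamma,\mu,\nu,O)$ is needed merely makes explicit what the paper leaves implicit.
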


We can now rewrite \cref{equ:gen} as follows:

\begin{equation}
M_{p,q,r;\mu,\nu}^{\le,<,(2)}=\sum_{\Gamma}\sum_{O\in\mathcal{O}(\Gamma)}\sum_{\underline{x}\in Q(\Gamma,\mu,\nu,O)}\prod\omega(e)F(\Gamma,\mu,\nu,p,q,r,\underline{x},O).
\end{equation}
It is well-known that summing a polynomial over a polytope with rational vertices yields a quasi-polynomial (see e.g. \cite{zbMATH06340142}, \cite{baldoni2014three}).\\
Since $\prod\omega(e)F(\Gamma,\mu,\nu,p,q,r,\underline{x},O)$ is a polynomial in $\underline{x}$ and the entries of $\mu$ and $\nu$ and since $Q(\Gamma,\mu,\nu,O)$ is a polytope, $M_{p,q,r;\mu,\nu}^{\le,<,(2)}$ is a quasi-polynomial in each chamber of the hyperplane arrangement given as the common refinement of $\mathcal{W}$ and the family $(\mathcal{V}(\Gamma,O))_{\Gamma,O}$.

\begin{Remark}
\label{re:poly}
We note that with our method, we only proved that we obtain piecewise quasi-polynomiality for $H_{p,q,r;\ell(\mu),\ell(\nu)}^{\le,<,(2)}$. However, we know by \cref{thm:gjv} that the triply interpolated Hurwitz number is a polynomial for $p=0$. Moreover, in \cref{alg:2} we pick one chamber $C'$ induced by the refined hyperplane arrangement in $C$. However, by \cref{thm:gjv} the result does not depend on the choice of the finer chamber in $C$ for $q=0$. By \cite{hahn2018wall} the same is true for $p=0$.
\end{Remark}

We can now state our algorithm for higher genera, which computes the polynomials for $H_{p,q,r;\ell(\mu),\ell(\nu)}^{\le,<,(2)}$ when $p=0$ or $q=0$..

\begin{Algorithm}
\label{alg:2}
\begin{algorithm}[H]{\textbf{Computing polynomials in genus $g$}}
\begin{algorithmic}[1]
\Procedure{Hurwitz}{$\ell(\mu),\ell(\nu),p,q,r,C$}\Comment{The polynomial expressing $M_{p,q,r;\mu,\nu}^{\le,<,(2)}$ and $H_{p,q,r;\mu,\nu}^{\le,<,(2)}$ in the chamber $C$ for $p=0$ or $q=0$}
\State  $\mathfrak{G}(\ell(\mu),\ell(\nu),C,g,k)\gets$ Set of all monodromy graphs of type $(g,\mu,\nu,p,q,r)$ in $C$
\ForAll{$\Gamma\in\mathfrak{G}(\ell(\mu),\ell(\nu),C,g,k)$}
\State $\mathcal{O}(\Gamma)\gets$ Set of all orderings on $\Gamma$
\ForAll{$O\in\mathcal{O}(\Gamma)$}
\State $Q(\Gamma,\mu,\nu,O)\gets$ Polytope induced by the inequalities for $\Gamma$ in $C$ and $O$
\State $\mathcal{V}(\Gamma,O)\gets$ Hyperplane arrangement induced by the equations for $Q(\Gamma,\mu,\nu,O)$
\EndFor
\State $C(\mathcal{O})\gets$ Common refinement of $C$ and the family $\mathcal{V}(\Gamma,O)_{O\in\mathcal{O}(\Gamma)}$
\EndFor
\State $C(\mu,\nu)\gets$ Common refinement of $C$ and the family of chambers we computed above $(C(\mathcal{O}(\Gamma)))_{\Gamma\in\mathfrak{G}(\ell(\mu),\ell(\nu),C,g,k)}$
\State Choose some chamber $C'$ in $C(\mathcal{O}(\Gamma))$
\ForAll{$\Gamma\in\mathfrak{G}(\ell(\mu),\ell(\nu),C,g,k)$}
\State $E(\Gamma)\gets$ Set of all edges in $\Gamma$
\ForAll{$e\in E(\Gamma)$}
\State $\omega(e)\gets$ linear form in of $\mu$, $\nu$ and $\underline{x}$
\EndFor
\State $W(\Gamma)\gets\prod\omega(e)$, where we take the product over all non-bold edges $e\in E(\Gamma)$ which are not adjacent to out-ends
\State $CP(\Gamma)\gets$ State of all chain-paths
\ForAll{$O\in\mathcal{O}(\Gamma)$}
\ForAll{$P\in CP(\Gamma)$}
\State $q_P(O)\gets$ Polynomial expressing \cref{equ:chainsum} with respect to the order $O$
\EndFor
\State $c(O)\gets{\displaystyle \prod_{P\in CP(\Gamma)}g_P(O)}$
\EndFor
\State $m(\Gamma,C')\gets\sum_{O\in\mathcal{O}(\Gamma)}\sum_{\underline{x}\in Q(\Gamma,\mu,\nu,O)}\prod W(\Gamma)c(O)$
\EndFor
\State ${\displaystyle m_{p,q,r;\ell(\mu),\ell(\nu)}^{\le,<,(2)}(C)=\sum_{\Gamma\in\mathfrak{G}(\ell(\mu),\ell(\nu),C,g,k)}m(\Gamma,C')}$\Comment{The desired polynomial $m_{p,q,r;\ell(\mu),\ell(\nu)}^{\le,<,(2)}(C)$}
\State \textbf{return} $m_{p,q,r;\ell(\mu),\ell(\nu)}^{\le,<,(2)}(C)$
\If{$p=0$ or $q=0$}
\State \textbf{return} $\frac{1}{\mu_1\cdots\mu_{\ell(\mu)}}m_{p,q,r;\ell(\mu),\ell(\nu)}^{\le,<,(2)}(C)$ \Comment{Polynomial expressing $H_{p,q,r;\ell(\mu),\ell(\nu)}^{\le,<,(2)}$ in $C$}
\EndIf
\EndProcedure
\end{algorithmic}
\end{algorithm}
\end{Algorithm}

\begin{Example}
\label{ex:g}
In this Example, we treat the graph $\Gamma$ in \cref{fig:gg}. The weight function is \begin{equation}\mu_1\mu_2(\mu_2+\mu_3-x_2).\end{equation}
The only chain-path is given by $(\mu_3,\mu_2+\mu_3,x_1,\mu_1+x_1,\nu_1)$, thus the counter function is given by:
\begin{align}
F(\Gamma,\mu,\nu,3,0,0,\underline{x})=\sum_{l_2=1}^{\mu_3}\sum_{\substack{l_3=\max\\\{l_2,\mu_3-x_1+1\}}}^{\mu_3}\sum_{\substack{l_4=\max\\\{l_3,\mu_3-\mu_1-x_1+1\}}}^{\mu_3}\mu_3-l_4+1.
\end{align}
There are five different orderings:
\begin{align}
O_1:\nu_1>\mu_2+\mu_3>\mu_1+x_1>x_1>\mu_3\\
O_2:\nu_1>\mu_2+\mu_3>\mu_1+x_1>\mu_3>x_1\\
O_3:\nu_1>\mu_2+\mu_3>\mu_3>\mu_1+x_1>x_1\\
O_4:\nu_1>\mu_1+x_1>\mu_2+\mu_3>x_1>\mu_3\\
O_5:\nu_1>\mu_1+x_1>\mu_2+\mu_3>\mu_3>x_1
\end{align}
We show, how to compute the contributions for $O_1$ and $O_2$. For $O_1$ we obtain:
\begin{equation}
F(\Gamma,\mu,\nu,3,0,0,\underline{x},O_1)=\sum_{l_2=1}^{\mu_3}\sum_{\substack{l_3=l_2}}^{\mu_3}\sum_{\substack{l_4=l_3}}^{\mu_3}\mu_3-l_4+1.
\end{equation}
For the ordering $O_2$, we get the following formula:
\begin{equation}
F(\Gamma,\mu,\nu,3,0,0,\underline{x},O_1)=\sum_{l_2=1}^{\mu_3-x_1+1}\sum_{l_3=\mu_3-x_1}^{\mu_3}\sum_{l_4=l_3}^{\mu_3}\mu_3-l_4+1+\sum_{l_2=\mu_3-x_1+2}^{\mu_3}\sum_{l_3=l_2}^{\mu_3}\sum_{l_4=l_3}\mu_3-l_4+1.
\end{equation}
The ordering imposes the following inequalities on $x_1$:
\begin{align}
&O_1:\mu_2+\mu_3-\mu_1>x_1>\mu_3\\
&O_2:\min\{\mu_2+\mu_3-\mu_1,\mu_1\}>x_1>\max\{0,\mu_3-\mu_1\}\\
\end{align}
The inequality given by $O_2$ induces additional hyperplanes not given by equations of type $\sum:{i\in I}\mu_i-\sum_{j\in J}\nu_j$. The contributions of $O_1$ and $O_2$ (which we do not expand further, since the first sum alone expands to 20 terms) are
\begin{align}
&\sum_{x_1=\mu_3}^{\mu_2+\mu_3-\mu_1}\left(\mu_1\mu_2(\mu_2+\mu_3-x_2)\sum_{l_2=1}^{\mu_3}\sum_{\substack{l_3=l_2}}^{\mu_3}\sum_{\substack{l_4=l_3}}^{\mu_3}\mu_3-l_4+1\right)+\\
&\sum_{x_1=\max\substack\{0,\mu_3-\mu_1\}}^{\min\{\mu_2+\mu_3-\mu_1,\mu_1\}}\left(\mu_1\mu_2(\mu_2+\mu_3-x_2)\sum_{l_2=1}^{\mu_3-x_1+1}\sum_{l_3=\mu_3-x_1}^{\mu_3}\sum_{l_4=l_3}^{\mu_3}\mu_3-l_4+1+\right.\\&\left.\sum_{l_2=\mu_3-x_1+2}^{\mu_3}\sum_{l_3=l_2}^{\mu_3}\sum_{l_4=l_3}^{\mu_3}\mu_3-l_4+1\right),
\end{align}
which is a polynomial in each chamber of the refined hyperplane arrangement. We note that after computing the polynomial for every graph our method yields the same polynomial in each chamber (see \cref{re:poly}), while this may not be true of each graph.
\end{Example}

\begin{figure}
\begin{tikzpicture}
\draw [line width=0.6mm] (0,0)--(4,0) node[near end, anchor=south] {$\mu_2+\mu_3$};
\draw (4,0) .. controls(6,1) .. (8,0) node[midway, anchor=south] {$\mu_2+\mu_3-x_1$};
\draw [line width=0.6mm] (4,0) .. controls(6,-1) .. (8,0) node[near start, anchor=north east] {$x_1$} node[near end, anchor=north west] {$\mu_1+x_1$};
\draw[dashed] (8,0)--(10,0) node[anchor=west] {$\nu_1$};
\draw[dashed] (0,-1)--(2,0);
\draw[dashed] (0,-2)--(6,-0.75);
\draw node[anchor=east] {$\mu_3$};
\draw (0,-1) node[anchor=east] {$\mu_2$};
\draw (0,-2) node[anchor=east] {$\mu_1$};
\draw (0,-3)--(10,-3);
\draw[fill] (0,-3) circle (2pt) (0,-3) node[anchor=north] {$0$};
\draw[fill] (2,-3) circle (2pt) (2,-3) node[anchor=north] {$1$};
\draw[fill] (4,-3) circle (2pt) (4,-3) node[anchor=north] {$2$};
\draw[fill] (6,-3) circle (2pt) (6,-3) node[anchor=north] {$3$};
\draw[fill] (8,-3) circle (2pt) (8,-3) node[anchor=north] {$4$};
\draw[fill] (10,-3) circle (2pt) (10,-3) node[anchor=north] {$5$};
\end{tikzpicture}
\caption{Triply interpolated monodromy graph of genus $1$.}
\label{fig:gg}
\end{figure}
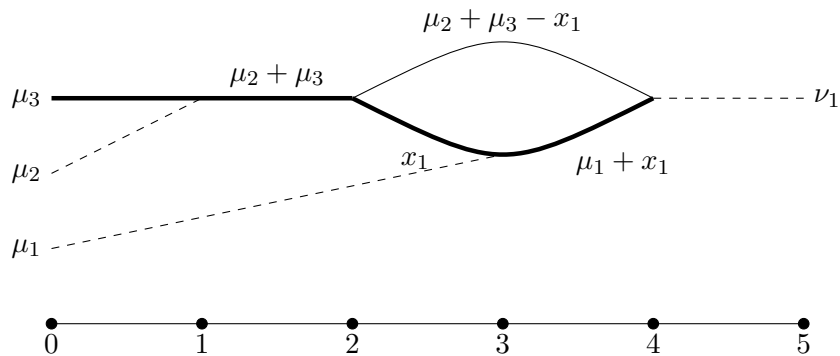

\subsection{Chamber behaviour in genus 0}
\label{sec:ch}
In this section, we define a counting problem in the symmetric group generalising triply interpolated Hurwitz numbers in genus $0$. We use this to obtain recursive wall-crossing formulae in genus $0$. As before, for fixed $\ell(\mu)$ and $\ell(\nu)$, let $m_{p,q,r;\ell(\mu),\ell(\nu)}^{\le,<,(2)}(C)$ be the polynomial expressing $M_{p,q,r;\ell(\mu),\ell(\nu)}^{\le,<,(2)}$ in the chamber $C$. Moreover, let $C_1$ and $C_2$ be adjacent chambers seperated by the wall $\delta=\sum_{i\in I}\mu_i-\sum_{j\in J}\nu_j$ (w.l.o.g $\delta>0$ in $C_1$). We want to compute the wall-crossing for fixed $\ell(\mu)$ and $\ell(\nu)$ \begin{equation}
WC_{C_1}^{C_2}(0,\ell(\mu),\ell(\nu),p,q,r)=m_{p,q,r;\ell(\mu),\ell(\nu)}^{\le,<,(2)}(C_1)-m_{p,q,r;\ell(\mu),\ell(\nu)}^{\le,<,(2)}(C_2).
\end{equation} 

We will study the wall-crossing formulae for $p=0$ or $q=0$, i.e. for interpolations between monotone and simple double Hurwitz numbers and interpolations between strictly monotone and simple double Hurwitz numbers.

\begin{Remark}
To study this wall-crossing problem, we define related Hurwitz-type counts generalising triply interpolated Hurwitz numbers. It is an interesting feature of this Hurwitz-type counting problem that the wall-crossing induced by the piecewise polynomial structure can itself be expressed in terms of these Hurwitz numbers with smaller input data. For a precise formulation, see \cref{def:general} and \cref{thm:wall}.
\end{Remark} 

\begin{disclaimer}
As mentioned before we will study the problem for $p=0$ or $q=0$. As the discussions are completely parallel, we will only work out the details for $q=0$ (i.e. the interpolation between monotone and simple double Hurwitz numbers).
\end{disclaimer}

We classify those reduced monodromy graphs $\Gamma$ having different multiplicity in $C_1$ than in $C_2$, since graphs with the same multiplicity cancel in $WC_{C_1}^{C_2}(0,\ell(\mu),\ell(\nu),p,0,r)$. By our discussion in \cref{sec:pi0}, there are five cases of graphs contributing to $m_{p,0,r;\ell(\mu),\ell(\nu)}^{\le,<,(2)}(C_1)$, which contribute a different multiplicity than $m_{p,0,r;\ell(\mu),\ell(\nu)}^{\le,<,(2)}(C_2)$:
 \begin{enumerate}
  \item [(1a)] The graphs contributing to $m_{p,0,r;\ell(\mu),\ell(\nu)}^{\le,<,(2)}(C_1)$ (resp. $m_{p,0,r;\ell(\mu),\ell(\nu)}^{\le,<,(2)}(C_2)$) having a normal edge of weight $\delta$ (resp. $-\delta$) emerging from one of the first $p$ vertices.
  \item [(1b)] The graphs contributing to $m_{p,0,r;\ell(\mu),\ell(\nu)}^{\le,<,(2)}(C_1)$ (resp. $m_{p,0,r;\ell(\mu),\ell(\nu)}^{\le,<,(2)}(C_2)$) having an non-normal edge of weight $\delta$ (resp. $-\delta$) emerging from one of the first $p$ vertices.
  \item [(1c)] The graphs contributing to $m_{p,0,r;\ell(\mu),\ell(\nu)}^{\le,<,(2)}(C_1)$ (resp. $m_{p,0,r;\ell(\mu),\ell(\nu)}^{\le,<,(2)}(C_2)$) having an edge of weight $\delta$ (resp. $-\delta$) emerging from one of the last $r$ vertices.
\item [(2a)] The graphs contributing to $m_{p,0,r;\ell(\mu),\ell(\nu)}^{\le,<,(2)}(C_1)$ (resp. $m_{p,0,r;\ell(\mu),\ell(\nu)}^{\le,<,(2)}(C_2)$) with a chain-path (see \cref{def:chainpath}) containing two edges $e$ and $e'$ ($e$ coming before $e'$), such that $\omega(e)-\omega(e')=\delta$ (resp. $(\omega(e)-\omega(e')=-\delta$).
\item [(2b)] The graphs contributing to $m_{p,0,r;\ell(\mu),\ell(\nu)}^{\le,<,(2)}(C_1)$ (resp. $m_{p,0,r;\ell(\mu),\ell(\nu)}^{\le,<,(2)}(C_2)$)  with a chain-path containing two edges $e$ and $e'$ ($e$ coming before $e'$), such that $\omega(e)-\omega(e')=-\delta$ (resp. $\omega(e)-\omega(e')=\delta$).
 \end{enumerate}
 
Conditions (1a), (1b) and (1c) correspond to the fact that every edge weight must be greater than $0$. Since $\delta<0$ in $C_2$, the graph $\Gamma$ has multiplicity $0$ in that chamber. Conditions (2a) and (2b) correspond to changes in the polynomials computing the counters for each chain-path: The polynomial can change if $\mu_i-\omega(e)$ ($e$ contained in a chain-path starting at $\mu_i$) or $\omega(e)-\omega(e')$ ($e$ and $e'$ contained in the same chain-path) changes sign by crossing $\delta$. Note that $\mu_i-\omega(e)=\omega(e')-\omega(e)$ for $e'$ being the in-end $\mu_i$. Thus, we obtain the two cases (2a) and (2b).\\
The following idea will be our main tool in this section: We start with a triply interpolated monodromy graph and cut it along some distinguished edge (resp. two distinguished edges). Since the graphs are of genus $0$, we obtain two (resp. three) new triply interpolated monodromy graphs. As a first step, we classify the pairs (resp. triples) of graphs we can obtain by this cutting process. Our second step is a regluing process. We will glue our graphs from two (resp. three) smaller graphs. The key observation here is, that in order to obtain a triply interpolated monodromy graph again, this gluing has to respect the following:
\begin{enumerate}
\item The ordering of the chains of bold edges.
\item The monotonicity of the counters, if we glue edges to a new chain-path.
\end{enumerate}
In order to formalise this, we introduce a new and more general Hurwitz-type counting problem, where these two conditions are framed in terms of what we call \textit{start and end conditions}. In some sense, these start and end conditions store the information concerning the counter and position of the edge, which we cut in the first place. Analysing this regluing process, we obtain a recursive wall-crossing formula for this more general counting problem. To understand the general idea, we start by decomposing the graphs above into smaller graphs and thus make the mentioned cutting process more precise.
\begin{enumerate}
\item [(1a)] Let $\Gamma$ be a triply interpolated monodromy graph of type $(0,\mu,\nu,p,0,r)$ with a normal edge of weight $\delta=\sum_{i\in I}\mu_i-\sum_{j\in J}\nu_j$, emanating from one of the first $p+q$ vertices. We cut the graph along the edge $\delta$ and obtain two graphs $\Gamma_1$ and $\Gamma_2$ of respective type $(0,\mu_I,(\nu_J,\delta),p_1,0,r_1)$ and $(0,(\mu_{I^c},\delta),\nu_{J^c},p_2,0,r_2)$ with $p_1+p_2=p$, and $r_1+r_2=r$. Starting with two triply interpolated monodromy graphs $\Gamma_1$ and $\Gamma_2$ of respective type $(0,\mu_I,(\nu_J,\delta),p_1,0,r_1)$ and $(0,(\mu_{I^c},\delta),\nu_{J^c},p_2,0,r_2)$, we want to glue them along the edge corresponding to $\delta$. However, this does not always yield a triply interpolated monodromy graph (e.g: If $\delta$ is a normal edge in $\Gamma_1$, but a bold edge in $\Gamma_2$.) Thus, we need some compatibility condition for these graphs. In fact, in order to obtain a triply interpolated monodromy graph of type $(0,\mu,\nu,p,0,r)$ with a normal edge of weight $\delta$, the edge $\delta$ must be normal in $\Gamma_1$ and dashed in $\Gamma_2$. Furthermore, if $\delta$ emanates from a chain of bold edges starting at $\mu_l$ in $\Gamma_1$ the edge $\delta$ must join with a chain of bold edges starting at $\mu_j$ in $\Gamma_2$ where $j>l$ (since the dashed and normal edges connect chains of bold edges). This corresponds to the end condition of type $(1,l,i)$ for $\Gamma_1$ and the start condition of type $(1,l)$ for $\Gamma_2$ in \cref{def:gen}, where $i$ is the label we choose for the out-end of $\Gamma_1$ corresponding to $\delta$.
\item [(1b)] Let $\Gamma$ be a triply interpolated monodromy graph of type $(0,\mu,\nu,p,0,r)$ with a non-normal edge of weight $\delta=\sum_{i\in I}\mu_i-\sum_{j\in J}\nu_j$. As before, we cut along $\delta$ and obtain two graphs $\Gamma_1$ and $\Gamma_2$ of respective type $(0,\mu_I,(\nu_J,\delta),p_1,0,r_1)$ and $(0,(\mu_{I^c},\delta),\nu_{J^c},p_2,0,r_2)$. Gluing two graphs $\Gamma_1$ and $\Gamma_2$ of these types along $\delta$, we see that in order to obtain a graph as in (1b), there are two types of conditions: Either $\delta$ is dashed in both $\Gamma_1$ and $\Gamma_2$ and if $\delta$ is contained in a chain-path starting $\mu_l$ in $\Gamma_1$, the in-end $\delta$ in $\Gamma_2$ must join with a chain of bold edges starting at $\mu_j$ in $\Gamma_2$ with $j>l$. Alternatively, $\delta$ is dashed in $\Gamma_1$ and bold in $\Gamma_2$. Morever if $\delta$ has counter $c$ in $\Gamma_1$, the first inner edge of the chain of bold edges starting at $\delta$ in $\Gamma_2$ must have counter $c'>c$. This corresponds to the end condition of type $(2,i,l,c)$ for $\Gamma_1$ and start condition of type $(2,l,c)$ in \cref{def:gen}, where $i$ is the label we choose for the out-end of $\Gamma_1$ corresponding to $\delta$.
\item [(1c)] Let $\Gamma$ be a triply interpolated monodromy graph of type $(0,\mu,\nu,p,0,r)$ with a normal edge of weight $\delta=\sum_{i\in I}\mu_i-\sum_{j\in J}\nu_j$, emanating from one of the last $r$ vertices. As before, we cut along $\delta$ and obtain two graphs $\Gamma_1$ and $\Gamma_2$ of respective type $(0,\mu_I,(\nu_J,\delta),p_1,0,r_1)$ and $(0,(\mu_{I^c},\delta),\nu_{J^c},p_2,0,r_2)$. Here the only condition for the gluing process we require is $\delta$ only interacting with one of the last $r_j$ vertices in $\Gamma_j$ ($j=1,2$). This corresponds to the end and start condition $(3,i)$ in \cref{def:gen}, where $i$ is the label we choose for the out-end of $\Gamma_1$ corresponding to $\delta$. 
\item [(2a)] We once again start with a triply interpolated monodromy graph of type $(0,\mu,\nu,p,0,r)$. We impose the condition that there is a chain-path with two edges $e_1,e_2$ ($e_1$ appearing before $e_2$), such that
\begin{equation}
\omega(e_1)-\omega(e_2)=\delta=\sum_{i\in I}\mu_i-\sum_{j\in J}\nu_j.
\end{equation}
For
\begin{equation}\label{equ:e1}
\omega(e_1)=\sum_{i\in I_1}\mu_i-\sum_{j\in J_1}\nu_j
\end{equation}
and
\begin{equation}\label{equ:e2}
\omega(e_2)=\sum_{i\in I_2}\mu_i-\sum_{j\in J_2}\nu_j,
\end{equation}
and $\omega(e_1)-\omega(e_2)=\delta$ this translates to $I_2=I_1\sqcup I^c$ and $J_2=J_1\sqcup J^c$. Cutting along the edges $e_1$ and $e_2$, we obtain three graphs $\Gamma_1$, $\Gamma_2$ and $\Gamma_3$ of respective types
\begin{align}
\label{equ:type1}&(0,\mu_{I_1},\left(\nu_{J_1},\sum_{i\in I_1}\mu_i-\sum_{j\in J_1}\nu_j\right),p_1,0,r_1),\\
\label{equ:type2}&(0,\left(\mu_{I^c},\sum_{i\in I_1}\mu_i-\sum_{j\in J_1}\nu_j\right),\left(\nu_{J^c},\sum_{i\in I_2}\mu_i-\sum_{j\in J_2}\nu_j\right),p_2,0,r_2),\\
\label{equ:type3}&(0,\left(\mu_{I_2^c},\sum_{i\in I_2}\mu_i-\sum_{j\in J_2}\nu_j\right),\nu_{J_2^c},p_3,0,r_3),
\end{align}
where $p_1+p_2+p_3=p$ and $r_1+r_2+r_3=r$.
Regluing graphs of these respective types corresponds to the gluing process in (1b). Thus we need an end condition of type $(2,l,c,i)$ for $\Gamma_1$, start condition of type $(2,l,i)$ for $\Gamma_2$, end condition of type $(2,l,c,i)$ for $\Gamma_2$ and start condition of type $(2,l,i)$ for $\Gamma_3$. (If $I_1={p}$ and $J_1=\emptyset$, we only cut at $\delta+\mu_p$ and thus obtain only $\Gamma_2$ and $\Gamma_3$.)
\item [(2b)] Starting with a triply interpolated monodromy graph of type $(0,\mu,\nu,p,0,r)$, with a chain-path containing two edges $e_1$ and $e_2$ (with $e_1$ appearing before $e_2$, such that
\begin{equation}\omega(e_1)-\omega(e_2)=-\delta=\sum_{i\in I^c}\mu_i-\sum_{j\in J^c}\nu_j.
\end{equation}
For $\omega(e_1)$ and $\omega(e_2)$ as in \cref{equ:e1} and \cref{equ:e2} respectively, and $\omega(e_1)-\omega(e_2)=\delta$ this translates to $I_2=I_1\sqcup I$ and $J_2=J_1\sqcup J$. Similarly as in (2a), we cut along $e_1$ and $e_2$ to obtain three graphs $\Gamma_1$, $\Gamma_2$ and $\Gamma_3$ of respective types
\begin{align}&(0,\mu_{I_1},\left(\nu_{J_1},\sum_{i\in I_1}\mu_i-\sum_{j\in J_1}\nu_j\right),p_1,0,r_1),\\&(0,\left(\mu_{I},\sum_{i\in I_1}\mu_i-\sum_{j\in J_1}\nu_j\right),\left(\nu_{J},\sum_{i\in I_2}\mu_i-\sum_{j\in J_2}\nu_j\right),p_2,0,r_2),\\&(0,\left(\mu_{I_2^c},\sum_{i\in I_2}\mu_i-\sum_{j\in J_2}\nu_j\right),\nu_{J_2^c},p_3,0,r_3),
\end{align}
where $p_1+p_2+p_3=p$ and $r_1+r_2+r_3=r$. Regluing graphs of these respective types, we need to impose the same end and start conditions as in (2a).
\end{enumerate}

\begin{Notation}
 We fix two partitions $\mu$ and $\nu$.
 \begin{enumerate}
\item For a subset $I=\{i_1,\dots,i_n\}$ (where $i_j<i_{j+1}$) of $\{1,\dots,\ell(\mu)\}$ and positive integers $\delta$, $j$ ($j\notin I$), we denote by $(\mu_I,\delta)_j$ the partition $(\mu_{i_1},\dots,\mu_{i_j},\delta,\mu_{i_j+1},\dots,\mu_{i_n})$.
 %\item Let $\sigma$ be a permutation of cycle type $\mu$ with cycles labeled by $1,\dots,\ell(\mu)$. We denote by $\sigma_I$ for $I\subset\{1,\dots,\ell(\mu)\}$ the permutation consisting of the cycles with labels in $I$.
 \item Let $(\sigma_1,\tau_1,\dots,\tau_r,\sigma_2)$ be a triply interpolated factorisation with $\mathcal{C}(\sigma_1)=\mu$ and $\mathcal{C}(\sigma_2)=\nu$. We define $\tau^1(l)$ to be the transposition with the biggest position containing elements of the cycle of $\sigma_2$ labeled $l$. Moreover, let $t^1(l)$ be the position of $\tau^1(l)$.
 \item Let $(\sigma_1,\tau_1,\dots,\tau_r,\sigma_2)$ be a triply interpolated factorisation with $\mathcal{C}(\sigma_1)=\mu$ and $\mathcal{C}(\sigma_2)=\nu$. We define $\tau^2(l)$ to be the transposition with the smallest position containing elements of the cycle of $\sigma_1$ labeled $l$. Morover, let $t^2(l)$ be the position of $\tau^2(l)$.
 %\item Let \begin{equation}\sigma=(1,\dots,\mu_1)(\mu_1+1,\dots,\mu_1+\mu_2)\cdots\left(\sum_{i=1}^{\ell(\mu)-1}\mu_i+1,\dots,\sum_{i=1}^{\ell(\mu)}\mu_i\right).\end{equation} Let $I\subset\{1,\dots,\ell(\mu)\}$, $s\in\{1,\dots,|\mu|\}$ and $\delta$ some positive number, such that \begin{equation}\delta\le\sum_{\substack{i\in I^c\\i<s}\mu_i}.\end{equation} Then we define \begin{equation}\sigma_{I,\delta}^s=\sigma_I\cdot\left(\dots a_i\dots s\ s+1\dots\sum_{k=1}^{i_s}\mu_k\right),\end{equation}
 %where $\left(\dots a_i\dots s\ s+1\dots\sum_{k=1}^{i_s}\mu_k\right)$ is a cycle of length $\delta$, $i_s$ is the index of the cycle of $\sigma$ that contains $s$ and $a_i$ is the $i-$th number not moved by $\sigma_{I}$.
 \end{enumerate}
\end{Notation}

\begin{Definition}
\label{def:gen}
Let $\mu,\nu,p,q,r$ be data as before. Let $\eta=(\sigma_1,\tau_1,\dots,\tau_m,\sigma_2)$ be a triply interpolated factorisation of type $(0,\mu,\nu,p,0,r)$, such that $\tau_i=(r_i\ s_i)$.\\
We begin by defining end conditions:
\begin{enumerate}
\item We say $\eta$ satisfies end condition $(1,l,i)$ if 
\begin{itemize}
\item $t^1(i)\le k$
\item $\sum_{j=1}^{l-1}\mu_j+1\le s_{t^1(i)}\le \sum_{k=1}^l\mu_i$
\end{itemize}
In monodromy graph language, this corresponds to the following picture: The out-end corresponding to $\nu_i$ is coloured normal and emanates from the chain of bold edges starting at $\mu_l$ (see \cref{fig:case1}).\vspace{\baselineskip}\\
\begin{figure}[ht]
\begin{center}
\begin{tikzpicture}
\draw [line width=0.6mm] (0,0)--(4,0);
\draw [line width=0.6mm] (4,0)--(6,1);
\draw [line width=0.6mm] (6,1)--(7,1);
\draw (6,1)--(7,0);
\draw (2,0)--(1,-1);
\draw (4,0)--(5,-1);
\draw (0,-2)--(7,-2);
\draw (0,0) node[anchor=east] {$\mu_l$};
\draw (7,0) node[anchor=west] {$\nu_i$};
\draw[fill] (6,-2) circle (2pt) node[anchor=north] {$t^1(i)$};
\end{tikzpicture}
\caption{Schematic drawing of end condition $(1,l,i)$.}
\label{fig:case1}
\end{center}
\end{figure}
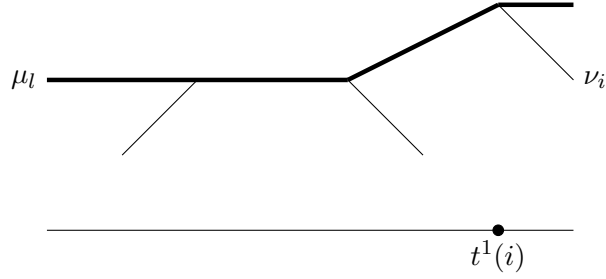
\item We say $\eta$ satisfies end condition $(2,l,c,i)$ if 
\begin{itemize}
\item $t^1(i)\le k$
\item $s_{t^1(i)}=\sum_{j=1}^{l-1}\mu_j+c$ for $0\le c<\mu_l$
\end{itemize}
In monodromy graph language, this corresponds to the following picture: The out-end corresponding to $\nu_i$ is coloured dashed, has counter $c$ and emanates from the chain of bold edges starting at $\mu_l$. (See \cref{fig:case3}).\vspace{\baselineskip}\\
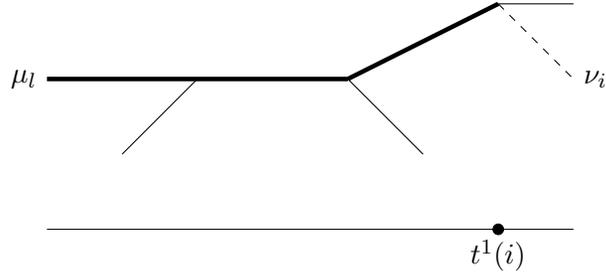
\begin{figure}[ht]
\begin{center}
\begin{tikzpicture}
\draw [line width=0.6mm] (0,0)--(4,0);
\draw [line width=0.6mm] (4,0)--(6,1);
\draw (6,1)--(7,1);
\draw [dashed] (6,1)--(7,0);
\draw (2,0)--(1,-1);
\draw (4,0)--(5,-1);
\draw (0,-2)--(7,-2);
\draw (0,0) node[anchor=east] {$\mu_l$};
\draw (7,0) node[anchor=west] {$\nu_i$};
\draw[fill] (6,-2) circle (2pt) node[anchor=north] {$t^1(i)$};
\end{tikzpicture}
\caption{Schematic drawing of end condition $(2,l,c,i)$.}
\label{fig:case3}
\end{center}
\end{figure}
\item We say $\eta$ satisfies end condition $(3,i)$ if 
\begin{itemize}
\item $t^1(i)>k$
\end{itemize}
In monodromy graph language, this corresponds to the following picture: The out-end corresponding to $\nu_i$ emanates from a vertex whose position is greater than $k$.
\end{enumerate}
Now we define start conditions.
\begin{enumerate}
\item We say $\eta$ satisfies start condition $(1,l)$ if 
\begin{itemize}
\item $t^2(l)\le k$
\item $s_{t^2(l)}\ge\sum_{j=1}^{l}\mu_j+1$
\end{itemize}
In monodromy graph language, this corresponds to the following picture: The in-end corresponding to $\mu_l$ is coloured dashed and joined with a chain of bold edges emanating at $\mu_{l'}$ for $l'>l$. (See \cref{fig:case2}).
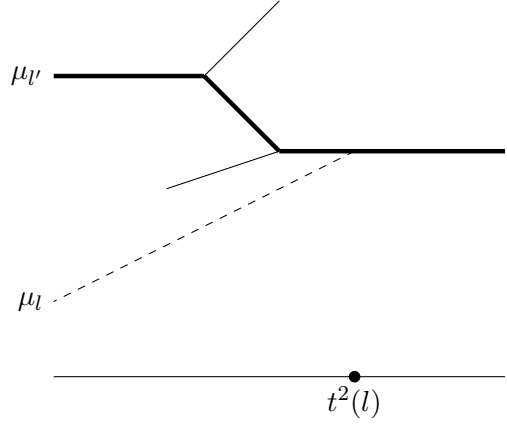
\begin{figure}[ht]
\begin{center}
\begin{tikzpicture}
\draw [line width=0.6mm] (0,0)--(2,0);
\draw (2,0)--(3,1);
\draw [line width=0.6mm] (2,0)--(3,-1);
\draw (3,-1)--(1.5,-1.5);
\draw [line width=0.6mm] (3,-1)--(6,-1);
\draw [dashed] (4,-1)--(0,-3);
\draw (0,0) node[anchor=east] {$\mu_{l'}$};
\draw (0,-3) node[anchor=east] {$\mu_{l}$};
\draw (0,-4)--(6,-4);
\draw[fill] (4,-4)circle (2pt) node[anchor=north] {$t^{2}(l)$};
\end{tikzpicture}
   \caption{Schematic drawing of end condition $(1,l)$ for $l'>l$.}
   \label{fig:case2}
   \end{center}
\end{figure}
\item We say $\eta$ satisfies start condition $(2,l,c)$ if 
\begin{itemize}
\item $t^2(l)\le k$
\item $s_{t^2(l)}\ge\sum_{j=1}^{l-1}\mu_j+c$ for $0\le c<\mu_l$
\end{itemize}
In monodromy graph language, this corresponds to one of the following two pictures: Either $\mu_l$ is joined with a chain of bold edges emanating at $\mu_m$ for $m>l$ or there is a chain of bold edges starting at $\mu_l$ and the first counter is greater or equal to $c$.
\begin{figure}[ht]
\begin{center}
\begin{tikzpicture}
\draw [line width=0.6mm] (0,0)--(2,0);
\draw (2,0)--(3,1);
\draw [line width=0.6mm] (2,0)--(3,-1);
\draw (3,-1)--(1.5,-1.5);
\draw [line width=0.6mm] (3,-1)--(6,-1);
\draw [dashed] (4,-1)--(0,-3);
\draw (0,0) node[anchor=east] {$\mu_{l'}$};
\draw (0,-3) node[anchor=east] {$\mu_{l}$};
\draw (0,-4)--(6,-4);
\draw[fill] (4,-4)circle (2pt) node[anchor=north] {$t^{2}(l)$};
\end{tikzpicture}
\begin{tikzpicture}
\draw [line width=0.6mm] (0,0)--(2,0);
\draw (2,0)--(3,1);
\draw [line width=0.6mm] (2,0)--(3,-1) node[midway, anchor=south west] {$c'$};
\draw (3,-1)--(1.5,-1.5);
\draw [line width=0.6mm] (3,-1)--(6,-1);
\draw (0,0) node[anchor=east] {$\mu_{l'}$};
\draw (0,-3)--(6,-3);
\draw[fill] (2,-3)circle (2pt) node[anchor=north] {$t^{2}(l)$};
\end{tikzpicture}
    \caption{Schematic drawing of the two possible graphs for end condition $(2,l,c)$: $l'>l$ in the left graph;$c'\ge c$ in the right graph.}
   \label{fig:case4}
   \end{center}
\end{figure}
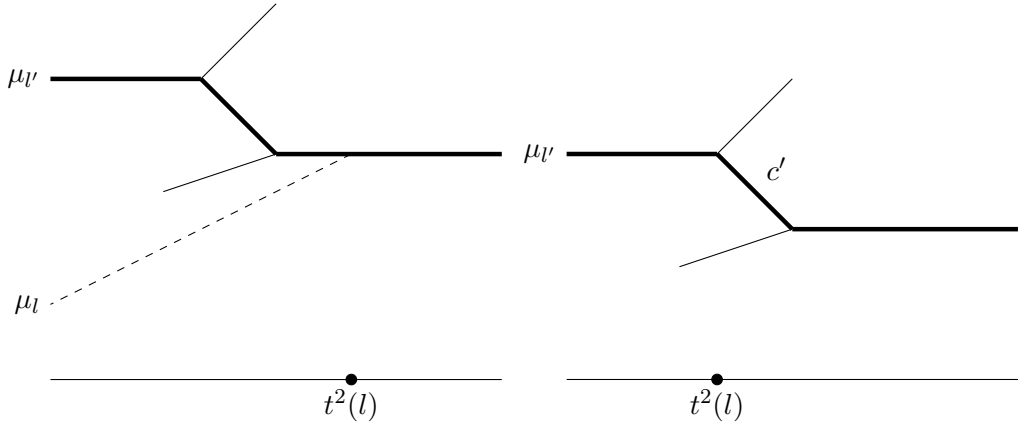
\item We say $\eta$ satisfies start condition $(3,l)$ if 
\begin{itemize}
\item $t^2(l)>k$
\end{itemize} 
In monodromy graph language, this corresponds to the following picture: The in-end corresponding to $\mu_l$ is adjacent to a vertex whose position is greater than $k$.
\end{enumerate}
\end{Definition}

\begin{Definition}
\label{def:general}
Let $S$ be a set of start conditions and $E$ a set of end conditions, i.e. for each $i\in[\ell(\mu)]$ (resp. $j\in[\ell(\nu)]$), there exists at most one tuple $(1,l)$, $(2,l,c)$ or $(3,l)$ (resp. $(1,l,i)$, $(2,l,c,i)$ or $(3,i)$) for some $l\in\{1,\dots,\ell(\mu)\},c\in\{0,\dots,\mu_l-1\}$ (resp. $i\in[\ell(\nu)],l\in[\ell(\mu],c\in\{0,\dots,\mu_l-1\}$), which is contained in $S$ (resp. $E$). Then we define $M_{p,0,r}^{\le,<,(2)}(\mu,\nu,S,E)$ to be the number of all triply interpolated factorisations $(\sigma_1,\tau_1,\dots,\tau_m,\sigma_2)$ of type $(0,\mu,\nu,p,0,r)$ with $\sigma_1$ as in \cref{equ:per} satisfying the conditions in $E$ and $S$.
\end{Definition}

\begin{Remark}
For $E=S=\emptyset$, we obtain triply interpolated Hurwitz numbers. Moreover, our methods from \cref{sec:pi0} can be applied to this generalised version to obtain piecewise polynomiality in the entries of $\mu$ and $\nu$ and the information in $S$ and $E$ with chambers given by $\mathcal{W}$.\\
By the same arguments as in \cref{sec:pi0}, $M_{p,0,r}^{\le,<,(2)}(\mu,\nu,S,E)$ may be expressed as a polynomial in the entries of $\mu$ and $\nu$ in each chamber induced by the hyperplane arrangement $\mathcal{W}$. We denote the polynomial in the chamber $C$ by $m_{p,0,r}^{\le,<,(2)}(\mu,\nu,S,E)(C)$.
\end{Remark}

Before we are ready to state the main theorem of this section, we introduce some Notation.

\begin{Notation}
Let $\mu$ be an ordered partition, $i\in\{1,\dots,\ell(\mu)\}$ and $\delta$ an integer, then we define the partition $(\mu,\delta)_i=\left(\mu_1,\dots,\mu_{i-1},\delta,\mu_i,\dots,\mu_{\ell(\mu)}\right)$.\\
Moreover, let $S$ be a set of start conditions and $I\subset\{1,\dots,\ell(\mu)\}$, then $S_I$ is the set of all start conditions $(1,l),(2,l,c)$ oder $(3,l)$ with $l\in I$.
\end{Notation}

\begin{figure}[ht]
\begin{tikzpicture}
\draw [line width=0.6mm] (0,2)--(1,2);
\draw (1,2)--(2,3);
\draw [line width=0.6mm] (1,2)--(2,1);
\draw [dashed] (2,1)--(2.5,1.5);
\draw (2,1)--(3,1) node[midway,anchor=north east] {$\delta$};
\draw (3,1)--(4,1) node[anchor=south] {$\omega(e_2)$};
\draw [line width=0.6mm] (0,0)--(2.25,0);
\draw (2.25,0)--(2.5,-0.5);
\draw [line width=0.6mm] (2.25,0)--(3,1) node[midway,anchor=north west] {$\omega(e_1)$};
\end{tikzpicture}\hspace{15pt}\begin{tikzpicture}
\draw [line width=0.6mm] (0,2)--(1,2);
\draw (1,2)--(2,3);
\draw [line width=0.6mm] (1,2)--(2,1);
\draw (2,1)--(2.5,1.5);
\draw [dashed] (2,1)--(3,1) node[midway,anchor=north east] {$\delta$};
\draw (3,1)--(4,1) node[anchor=south] {$\omega(e_2)$};
\draw [line width=0.6mm] (0,0)--(2.25,0);
\draw (2.25,0)--(2.5,-0.5);
\draw [line width=0.6mm] (2.25,0)--(3,1) node[midway,anchor=north west] {$\omega(e_1)$};
\end{tikzpicture}
\caption{The case (1a) and (2a) simultaneously on the left, the case (1b) and (2a) simultaneously on the right.}
\label{fig:local}
\end{figure}
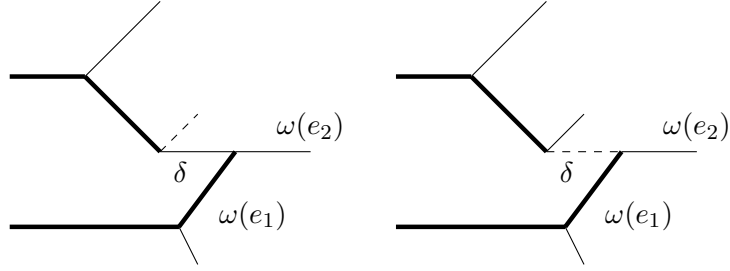

\begin{thm}
\label{thm:wall}
Let $\mu$ and $\nu$ be ordered partitions, $p,q,r$ non-negative integers (yielding genus $0$) and $C_1,C_2$ chambers separated by the wall defined by $\delta=\sum_{i\in I}\mu_i-\sum_{j\in J}\nu_j$ for subsets $I\subset\{1,\dots,\ell(\mu)\},J\subset\{1,\dots,\ell(\nu)\}$. Then
\begin{equation}W_{C_1}^{C_2}(\mu,\nu,k)=m_{p,0,r;\mu,\nu}^{\le,<,(2)}(\mu,\nu,S,E)(C_1)-m_{p,0,r;\mu,\nu}^{\le,<,(2)}(\mu,\nu,S,E)(C_2)
\end{equation} is a sum of products with factors of $\delta$ and two or three polynomials $m_{p',0,r'}^{\le,<,(2)}(\mu',\nu',S',E')(C_1)$ (resp. $m_{p',0,r'}^{\le,<,(2)}(\mu',\nu',S',E')(C_2)$), where the apostrophes indicate smaller input data. More precisely, the data $p',0,r',\mu',\nu',S',E'$ has to satisfy $p'<p$, $r'<r$, $\mu'=(\mu_I,\delta)_i$ (for some $i\in I$), $\nu'=(\nu_J,\delta)_j$ (for some $j\in J$), $S'$ is the union of $S_I$ and a start condition corresponding to the entry $\delta$ and $E'$ the union of $E_J$ and an end condition corresponding to the entry $\delta$ for $m_0^{k'}(\mu',\nu',S',E')(C_1)$ (resp. replacing $\delta$ by $-\delta$ for $m_0^{k'}(\mu',\nu',S',E')(C_2)$).
\end{thm}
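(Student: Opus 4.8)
The plan is to expand the wall-crossing as a sum over reduced monodromy graphs and then isolate exactly those graphs whose contribution differs between $C_1$ and $C_2$. Writing
\begin{equation*}
W_{C_1}^{C_2}(\mu,\nu,k)=\sum_\Gamma\left(\mathrm{mult}_{C_1}(\Gamma)-\mathrm{mult}_{C_2}(\Gamma)\right),
\end{equation*}
where $\mathrm{mult}_{C}(\Gamma)$ denotes the contribution of the reduced graph $\Gamma$ in the chamber $C$, which by Equation (\ref{equ:pol}) equals $\prod\omega(e)\cdot F(\Gamma,\mu,\nu,p,0,r)$ when $\Gamma$ is valid in $C$ and vanishes otherwise, every graph computed by the same polynomial in both chambers cancels. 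By the classification into the five families (1a)--(2b) established above, only graphs in those families survive. I would treat them family by family, cutting each surviving graph along its distinguished edge or pair of edges to obtain smaller triply interpolated monodromy graphs, and then verify that the data needed to reglue is recorded precisely by a start/end condition as in \cref{def:gen}. As in the disclaimer, I work throughout with $q=0$.

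For the families (1a), (1b), (1c) the offending edge has weight $\delta$, so by the positivity requirement in the proof of \cref{lem:ch} the graph is valid only in $C_1$ (taking $\delta>0$ there); its $C_2$-contribution vanishes and it survives with its full $C_1$-multiplicity. Cutting along this edge splits the genus-$0$ graph into two pieces $\Gamma_1,\Gamma_2$ of the types recorded above, with $p_1+p_2=p$ and $r_1+r_2=r$. Since the cut edge is normal or dashed and not adjacent to an out-end of $\Gamma$, it contributes exactly its weight $\delta$ to $\prod\omega(e)$, whereas after cutting it becomes an out-end of $\Gamma_1$ and an in-end of $\Gamma_2$ and hence drops out of the products for the pieces; because chain-paths do not intersect, $F$ also factors across the cut. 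This gives $\mathrm{mult}(\Gamma)=\delta\cdot\mathrm{mult}(\Gamma_1)\cdot\mathrm{mult}(\Gamma_2)$. The compatibility of the regluing is exactly the relevant start/end condition: $(1,l,i)$ paired with $(1,l)$ for (1a), $(2,l,c,i)$ paired with $(2,l,c)$ for (1b), and $(3,i)$ paired with $(3,i)$ for (1c). Summing over all such cuts reassembles, via \cref{def:general}, the products $\delta\cdot m_{p_1,0,r_1}^{\le,<,(2)}(\mu',\nu',S',E')(C_1)\cdot m_{p_2,0,r_2}^{\le,<,(2)}(\mu'',\nu'',S'',E'')(C_1)$ with the stated smaller data.

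The families (2a), (2b) are the crux and the main obstacle. Here the graph is valid in both chambers, and the discrepancy comes entirely from the chain-path counter polynomial $F$: the iterated sum in Equation (\ref{equ:chainsum}) is computed with respect to an ordering of the lower bounds $\mu_i-\omega(e_k)$ that flips across the wall, so that $F|_{C_1}$ and $F|_{C_2}$ are given by genuinely different polynomials. Fixing the chain-path with consecutive edges $e_1,e_2$ satisfying $\omega(e_1)-\omega(e_2)=\pm\delta$, one computes the jump $F|_{C_1}-F|_{C_2}$ directly from the argument of \cref{lem:cou}; the hard part will be to show that this difference telescopes into $\delta$ times the product of the counter-polynomials of the three sub-graphs $\Gamma_1,\Gamma_2,\Gamma_3$ of the types in Equations (\ref{equ:type1})--(\ref{equ:type3}), with the middle piece $\Gamma_2$ carrying an in-end and an out-end whose weights differ by $\delta$. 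Cutting simultaneously at $e_1$ and $e_2$ realises this decomposition, and the monotonicity linking the two resulting sub-chain-paths is recorded by type-$(2,\cdot)$ end and start conditions on $\Gamma_1,\Gamma_2$ and a start condition on $\Gamma_3$. I expect the bookkeeping of which threshold dominates in each chamber, together with the verification that the two Faulhaber polynomials differ by exactly one factor of $\delta$ times the lower-order counts, to be the most delicate computation.

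Finally I would assemble the contributions. Each cut produces data with $p'<p$, $r'<r$, $\mu'=(\mu_I,\delta)_i$, $\nu'=(\nu_J,\delta)_j$, and with $S',E'$ the union of the restricted conditions $S_I,E_J$ and the single new start/end condition attached to the entry $\delta$ (with $\delta$ replaced by $-\delta$ and $C_1$ by $C_2$ for the $C_2$-pieces). Summing over all graphs in the five families, over all admissible splittings of $p,r$ and of $\mu,\nu$ into the subsets appearing, and over the out-end labels $i,j$, yields an identity
\begin{equation*}
W_{C_1}^{C_2}(\mu,\nu,k)=\sum\ \delta\cdot\prod m_{p',0,r'}^{\le,<,(2)}(\mu',\nu',S',E')(C_1),
\end{equation*}
with two factors coming from families (1a)--(1c) and three from (2a)--(2b), which is the claimed recursive wall-crossing formula.
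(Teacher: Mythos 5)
Your setup is the paper's own: expand the wall-crossing as a sum over reduced monodromy graphs, cancel the graphs whose contribution agrees in $C_1$ and $C_2$, classify the survivors into the families (1a)--(2b), and cut/reglue with the data of the cut recorded by start and end conditions as in Definition \ref{def:gen}. But two genuine gaps remain. The first is that you treat the five families as disjoint and sum ``family by family'', whereas they overlap: the same graph can satisfy (1a) and (2a) simultaneously, or (1b) and (2a) simultaneously (the local picture in Figure \ref{fig:local} -- an edge of weight $\delta$ which at the same time sits in a configuration with chain-path edges $e_1,e_2$ and $\omega(e_1)-\omega(e_2)=\delta$). Such a graph is produced once by cutting along $\delta$ (a case-(1a)/(1b) pair) and again by cutting along $e_1$ and $e_2$ (a case-(2a) triple), so your final sum counts it twice. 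The paper's proof spends most of its effort exactly here: it subtracts explicit correction terms, namely $\sum_{h\in I,\ h<t}\delta\cdot m_{p_2-1,0,r_2}^{\le,<,(2)}(\mu_I,(\nu_J,\delta),E_J\cup(1,h,|J|+1),S_I)$ from the middle factor of each (2a)-summand for the (1a)/(2a) overlap, and the analogous term $\sum_{h\in I,\ h<t}\sum_{g=1}^{\mu_h}\delta\cdot m_{p_2-1,0,r_2}^{\le,<,(2)}(\mu_I,(\nu_J,\delta),E_J\cup(2,h,g,|J|+1),S_I)$ for the (1b)/(2a) overlap. Without these corrections the claimed recursion is simply not an identity.

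The second gap is that for (2a)/(2b) -- which you yourself identify as the crux -- you only announce the computation (``the hard part will be to show that this difference telescopes\dots''), and the shape you predict is not what the paper establishes. The paper does not prove a telescoping identity $F|_{C_1}-F|_{C_2}=\delta\cdot(\text{product of counter polynomials})$; instead it counts the case-(2a) graphs in each chamber separately by a closed formula: a sum over splittings $I_1\subset I^c$, $J_1\subset J^c$, $p_1+p_2+p_3=p$, $r_1+r_2+r_3=r$, over $t\in I_1$ and over the counters $1\le l\le l'\le\mu_t$ at the two cut edges, of the triple product of polynomials carrying the matching type-$(2,\cdot)$ start/end conditions, weighted by a multinomial coefficient $\binom{m-k}{\cdots}$ that counts the interleavings of the vertices not constrained by monotonicity. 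That interleaving factor is missing from your final display: when you pass from ``cut each big graph'' to ``product of counts of small graphs'', the relative positions of the non-monotone vertices of the three pieces must still be chosen, and this choice is precisely the multinomial coefficient. Two smaller inaccuracies: in case (1b) the cut non-normal edge may be bold in $\Gamma$ (continuing a chain), in which case its weight does \emph{not} occur as a factor of $m(\Gamma)$ by Lemma \ref{lem:mult}, so your blanket claim that the cut edge contributes $\delta$ to $\prod\omega(e)$ needs a case distinction; and in (2a) the degenerate situation $I_1=\{p\}$, $J_1=\emptyset$ makes $e_1$ an in-end, so one cuts only at $e_2$ and obtains two pieces rather than three -- this is where the ``two or three polynomials'' in the statement comes from and must be handled separately in your summation.
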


\begin{Remark}
The same theorem is true for $p=0$ and arbitrary $q$ where we must make the according adjustments in the Definition of start and end conditions (i.e. change inequalities for counters to strict inequalities for counters in the regluing process).
\end{Remark}

\begin{figure}
\begin{tikzpicture}
\draw [line width=0.6mm] (0,2)--(1,2);
\draw (1,2)--(2,3);
\draw [line width=0.6mm] (1,2)--(2,1);
\draw [dashed] (2,1)--(2.5,1.5);
\draw (2,1)--(3,1) node[midway,anchor=north east] {$\delta$};
\draw [line width=0.6mm] (3,1)--(4,1) node[anchor=south east] {$\omega(e_2)$};
\draw [line width=0.6mm] (4,1)--(5,1);
\draw (4,1)--(5,2);
\draw [line width=0.6mm] (0,0)--(2.25,0);
\draw (2.25,0)--(2.5,-0.5);
\draw [line width=0.6mm] (2.25,0)--(3,1) node[midway,anchor=north west] {$\omega(e_1)$};
\end{tikzpicture}\hspace{10pt}\begin{tikzpicture}
\draw[->] (0,1.5)--(2,1.5);
\draw (0,0);
\end{tikzpicture}\hspace{10pt}
\begin{tikzpicture}
\draw [line width=0.6mm] (0,2)--(1,2);
\draw (1,2)--(2,3);
\draw [line width=0.6mm] (1,2)--(2,1);
\draw [dashed] (2,1)--(2.5,1.5);
\draw (2,1)--(3,1) node[midway,anchor=north east] {$\delta$};
\draw[interrupt] [line width=0.6mm] (3,1)--(4,1) node[anchor=south east] {$\omega(e_2)$};
\draw [line width=0.6mm] (4,1)--(5,1);
\draw (4,1)--(5,2);
\draw [line width=0.6mm] (0,0)--(2.25,0);
\draw (2.25,0)--(2.5,-0.5);
\draw[interrupt] [line width=0.6mm] (2.25,0)--(3,1) node[midway,anchor=north west] {$\omega(e_1)$};
\end{tikzpicture}

\begin{tikzpicture}
\draw [line width=0.6mm] (0,2)--(1,2);
\draw (1,2)--(2,3);
\draw [line width=0.6mm] (1,2)--(2,1);
\draw [dashed] (2,1)--(2.5,1.5);
\draw (2,1)--(3,1) node[midway,anchor=north east] {$\delta$};
\draw [line width=0.6mm] (3,1)--(4,1) node[anchor=south east] {$\omega(e_2)$};
\draw [line width=0.6mm] (4,1)--(5,1);
\draw (4,1)--(5,2);
\draw [line width=0.6mm] (0,0)--(2.25,0);
\draw (2.25,0)--(2.5,-0.5);
\draw [line width=0.6mm] (2.25,0)--(3,1) node[midway,anchor=north west] {$\omega(e_1)$};
\end{tikzpicture}\hspace{10pt}\begin{tikzpicture}
\draw[->] (0,1.5)--(2,1.5);
\draw (0,0);
\end{tikzpicture}\hspace{10pt}
\begin{tikzpicture}
\draw [line width=0.6mm] (0,2)--(1,2);
\draw (1,2)--(2,3);
\draw [line width=0.6mm] (1,2)--(2,1);
\draw [dashed] (2,1)--(2.5,1.5);
\draw[interrupt] (2,1)--(3,1) node[midway,anchor=north east] {$\delta$};
\draw [line width=0.6mm] (3,1)--(4,1) node[anchor=south east] {$\omega(e_2)$};
\draw [line width=0.6mm] (4,1)--(5,1);
\draw (4,1)--(5,2);
\draw [line width=0.6mm] (0,0)--(2.25,0);
\draw (2.25,0)--(2.5,-0.5);
\draw [line width=0.6mm] (2.25,0)--(3,1) node[midway,anchor=north west] {$\omega(e_1)$};
\end{tikzpicture}\hspace{10pt}
\caption{Schematic drawing of the cut-and-join process corresponding to cases (1a) and (2a) simultaneously.}
\label{fig:cutandjoin}
\end{figure}
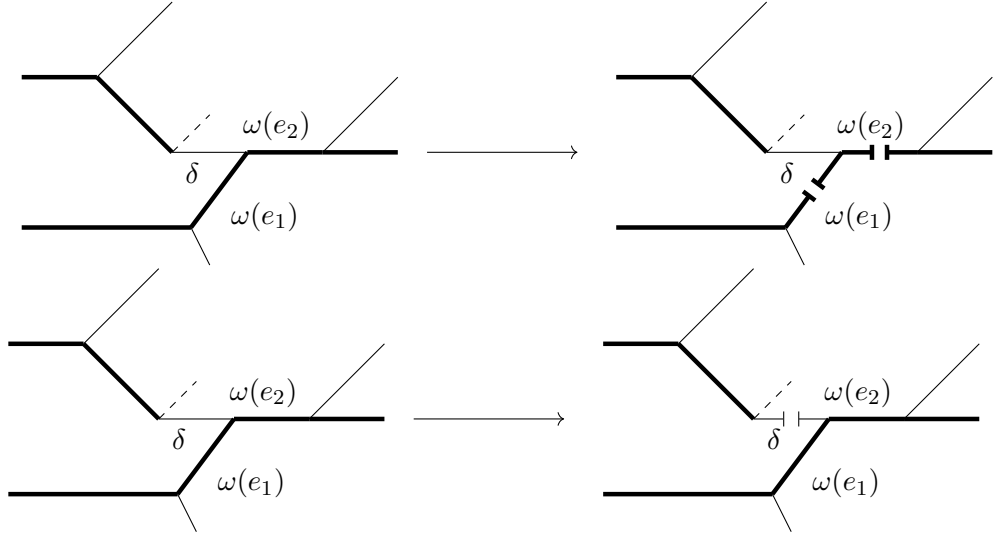

\begin{proof}[Proof of \cref{thm:wall}]
By the discussion at the beginning of this section, we already have an interpretation of the cutting and gluing process in terms of the polynomials $m_{p',0,r'}^{\le,<,(2)}(\mu',\nu',S',\allowbreak E')(C_1)$ and $m_{p',0,r'}^{\le,<,(2)}(\mu',\nu',S',E')(C_2)$. The proof is straightforward but involves many cases. We will work out the formula for (2a), since this is the most difficult case. The remaining cases can be worked out analogously.\\
As discussed before, we identify those graphs with different multiplicity in $C_1$ than in $C_2$. The following is the number of all graphs as in (2a), where $e_1$ (see \cref{equ:e1}) is not an edge adjacent to an in-end (and analogously in chamber $C_2$):
\begin{align}
&\sum_{\substack{I_1\subset I^c,J_1\subset J^c\\|I_1|>1\textrm{ or } J_1\neq\emptyset}}\sum_{p_1+p_2+p_3=k}\\
&\binom{m-k}{|I_1|+|J_1|-1-p_1,|I^c|+|J^c|-p_2,|I-I_1|+|J-J_1|-1-p_3}\\
&\sum_{t\in I_1}\sum_{1\le l\le l'\le \mu_t}\Bigg(m_{p_1,0,r_1}^{\le,<,(2)}(\mu_{I_1},(\nu_{J_1},\omega(e_1)),E_{J_1}\cup\{(2,t,l,|J_1|+1)\},S_{J_1})(C_1)\cdot\\
&m_{p_2,0,r_2}^{\le,<,(2)}((\mu_{I},\omega(e_1))_t,(\nu_{J},\omega(e_2),E_{J}\cup\{(2,t,l',|J|+1)\},S_{I}\cup\{(2,t,l)\})(C_1)\cdot\\&m_{p_3,0,r_3}^{\le,<,(2)}((\mu_{I^c-I_1},\omega(e_2))_t,\nu_{J^c-J_1},E_{I^c-I_1},S_{J^c-J_1}\cup\{(2,t,l')\})(C_1)\Bigg),
\end{align}
where $\omega(e_2)=\omega(e_1)+\delta)$ and we impose the condition on $r_i$ that we obtain genus $0$ in each factor. As mentioned before, by cutting along the two distinguished edges, we obtain three graphs of respective types as in \cref{equ:type1}, \cref{equ:type2} and \cref{equ:type3}. Each of these types corresponds to one of the three factors. The binomial coefficient counts the number of possible orderings on the vertices not affected by the monotonicity condition. All the other other cases work similarly, however what needs to be checked is that we obtain every graph exactly once. In fact, by the method above, we overcount, since (1a) and (2a) or (1b) and (2a) may appear simultaneously, which corresponds to the local picture illustrated in \cref{fig:local}: 
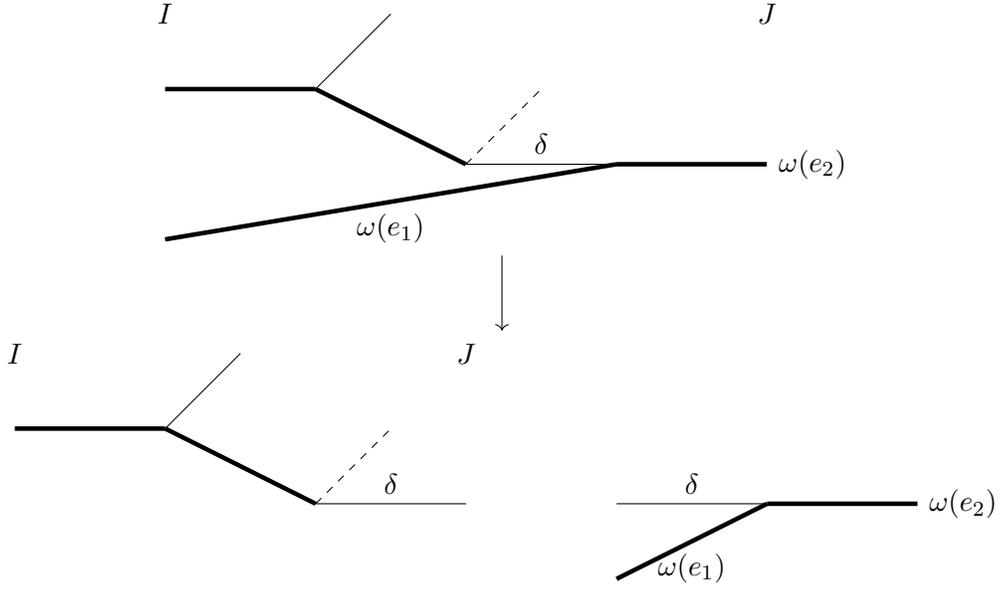
\begin{figure}[ht]
\begin{tikzpicture}
\draw (0,1) node {$I$};
\draw (8,1) node {$J$};
\draw [line width=0.6mm] (0,0)--(2,0);
\draw (2,0)--(3,1);
\draw [line width=0.6mm] (2,0)--(4,-1);
\draw[dashed] (4,-1)--(5,0);
\draw (4,-1)--(6,-1) node[anchor=south,midway] {$\delta$};
\draw [line width=0.6mm] (6,-1)--(8,-1) node[anchor=west] {$\omega(e_2)$};
\draw [line width=0.6mm] (0,-2)--(6,-1) node[midway,anchor=north] {$\omega(e_1)$};
\end{tikzpicture}

\begin{tikzpicture}
\draw[->] (0,0)--(0,-1);
\end{tikzpicture}

\begin{tikzpicture}
\draw (0,1) node {$I$};
\draw (6,1) node {$J$};
\draw [line width=0.6mm] (0,0)--(2,0);
\draw (2,0)--(3,1);
\draw [line width=0.6mm] (2,0)--(4,-1);
\draw[dashed] (4,-1)--(5,0);
\draw (4,-1)--(6,-1) node[anchor=south,midway] {$\delta$};
\draw (8,-1)--(10,-1) node[anchor=south,midway] {$\delta$};
\draw [line width=0.6mm] (10,-1)--(12,-1) node[anchor=west] {$\omega(e_2)$};
\draw [line width=0.6mm] (8,-2)--(10,-1) node[midway,anchor=north] {$\omega(e_1)$};
\end{tikzpicture}
\caption{Upper graph: Schematic drawing the graph corresponding to the correction term with set of in-ends indexed by $I\cup\{e_1\}$ and set of out-ends indexed by $J\cup\{e_2\}$. Lower graph: Schematic drawing of the graph obtained by cutting along $\delta$.}
\label{fig:correc}
\end{figure}
The same graph $\Gamma$ can be reglued from pieces that we obtain in case (1a) and (2a). By cutting along $\delta$ we obtain case (1a), however cutting along $e_1$ and $e_2$, we obtain case (2a). This cut-and-join process is illustrated in \cref{fig:cutandjoin}. The upper picture is a schematic drawing of the cutting along the edges $e_1$ and $e_2$, when case (1a) and (2a) happen at the same time. To compute the correction term, we need to remove all graphs $G$ as shown in the left of the lower picture. This is done by cutting along $\delta$ counting all graphs $\tilde{G}$ with out-end $\delta$ as in the right hand side of the lower picture and realising that the multiplicity of $G$ is $\delta$ times the multiplicity of $\tilde{G}$. In terms of the formula this means that we need to subtract the number of graphs as in upper picture in \cref{fig:correc}
\begin{equation}
\sum_{\substack{h\in I:\\h< t}}\delta\cdot m_{p_2-1,0,r_2}^{\le,<,(2)}(\mu_I,(\nu_J,\delta),E_J\cup (1,h,|J|+1),S_I).
\end{equation}
from the factor
\begin{equation}m_{p_2,0,r_2}^{\le,<,(2)}((\mu_{I},\omega(e_1))_t,(\nu_{J},\omega(e_2),E_{J}\cup\{(2,t,l',|J|+1)\},S_{I}\cup\{(2,t,l)\})(C_1)
\end{equation}
in each summand.
By a similar argument, we see that when the cases (1b) and (2a) appear simultaneously, we need to subtract the following term from 
\begin{equation}
m_{p_2,0,r_2}^{\le,<,(2)}((\mu_{I},\omega(e_1))_t,(\nu_{J},\omega(e_2),E_{J}\cup\{(2,t,l',|J|+1)\},S_{I}\cup\{(2,t,l)\})(C_1)
\end{equation}
in each summand:
\begin{equation}
\sum_{\substack{h\in I:\\h<t}}\sum_{g=1}^{\mu_h}\delta\cdot m_{p_2-1,0,r_2}^{\le,<,(2)}(\mu_I,(\nu_J,\delta),E_J\cup (2,h,g,|J|+1),S_I).
\end{equation}
\end{proof}

\nocite{johnson2015}
\bibliographystyle{alpha}
%\bibliography{Literaturverzeichnis}

\newcommand{\etalchar}[1]{$^{#1}$}

\end{document}